\newcommand{\raisemath}[1]{\mathpalette{\raisem@th{#1}}}
\newcommand{\raisem@th}[3]{\raisebox{#1}{$#2#3$}}
\def\resp{{\sfcode`\.1000 resp.}}
\def\ie{{\sfcode`\.1000 i.e.}}
\def\eg{{\sfcode`\.1000 e.g.}}
\def\cf{{\sfcode`\.1000 cf.}}
\def\st{{\sfcode`\.1000 s.t.}}
\def\loccit{{\sfcode`\.1000 loc.\hspace{1pt}cit.}}
\def\opcit{{\sfcode`\.1000 op.\hspace{1pt}cit.}}
\renewcommand{\AA}{\mathbb{A}}
\newcommand{\ZZ}{\mathbb{Z}}
\newcommand{\PP}{\mathbb{P}}
\newcommand{\GG}{\mathbb{G}}
\newcommand{\D}{\mathcal{D}}
\renewcommand{\S}{\mathcal{S}}
\newcommand{\I}{\mathcal{I}}
\newcommand{\Sm}{\mathrm{S}\mathrm{m}}
\newcommand{\Sch}{\mathrm{S}\mathrm{ch}}
\newcommand{\sPre}{\mathrm{s}\mathrm{P}\mathrm{re}}
\newcommand{\ob}{{ob}}
\newcommand{\op}{{op\hspace{-2pt}}}
\newcommand{\simpl}{{s\hspace{-2pt}}}
\newcommand{\Aeins}{{\raisemath{-1pt}{\AA^{\hspace{-1pt}1}\hspace{-1pt}}}}
\newcommand{\adjoint}{\rightleftarrows}
\newcommand{\pt}{{\mbox{\larger[-10]$+$}\hspace{-1pt}}}
\newcommand{\bigslant}[2]{{\raisebox{0em}{$#1$}/\raisebox{-.5em}{$#2$}}}
\DeclareMathOperator{\colim}{colim}
\DeclareMathOperator{\Hom}{Hom}
\DeclareMathOperator{\Spt}{Spt}
\DeclareMathOperator{\SH}{{\mathcal{S}\mathcal{H}}}
\DeclareMathOperator{\hocolim}{hocolim}
\DeclareMathOperator{\holim}{holim}
\DeclareMathOperator{\hhom}{\underline{\hom}}
\DeclareMathOperator{\Pic}{Pic}
\DeclareMathOperator{\Sing}{{\mathit{Sing}}}
\DeclareMathOperator{\Spec}{{Spec}}
\newcounter{zaehler}
\theoremstyle{plain}
\newtheorem{introtheorem}[zaehler]{Theorem}
\newtheorem{theorem}{Theorem}[section]
\newtheorem{proposition}[theorem]{Proposition}
\newtheorem{corollary}[theorem]{Corollary}
\newtheorem{question}[theorem]{Question}
\newtheorem{lemma}[theorem]{Lemma}
\theoremstyle{definition}
\newtheorem{reduction}[theorem]{Reduction}
\newtheorem{defi}[theorem]{Definition}
\newtheorem{example}[theorem]{Example}
\newtheorem{remark}[theorem]{Remark}
\numberwithin{equation}{section}
\title{Stable $\AA^1$-connectivity over Dedekind schemes}
\author{Johannes Schmidt}
\address[Johannes Schmidt]{\newline Mathematisches Institut, Universit\"at Heidelberg, 69120 Heidelberg, Germany}
\email{jschmidt@mathi.uni-heidelberg.de}
\author{Florian Strunk}
\address[Florian Strunk]{\newline Fakult\"at f\"ur Mathematik, Universit\"at Regensburg, 93040 Regensburg, Germany}
\email{florian.strunk@ur.de}
\thanks{The authors are supported by the SFB/CRC 1085 \emph{Higher Invariants} (Regensburg) funded by the DFG and the DFG-Forschergruppe 1920 
\emph{Symmetrie, Geometrie und Arithmetik} (Heidelberg--Darmstadt)}
\begin{document}

\begin{abstract}
We show that $\AA^1$-localization decreases the stable connectivity by at most one over a Dedekind scheme with infinite residue fields. For the proof, we establish a version of Gabber's geometric presentation lemma over a henselian discrete valuation ring with infinite residue field.
\end{abstract}

\maketitle

\section*{Introduction}

\subsection*{Background} 
In \cite{Morel05}, Morel formulated the following property on a scheme $S$, called the \emph{stable $\AA^1$-connectivity property}:
\medskip
\begin{center}
\begin{minipage}{100ex}
\it
The $\AA^1$-lo\-cali\-zation of a connected spectrum on the smooth Nisnevich-site over $S$ is still connected. 
\end{minipage}
\end{center}
\medskip
Here, the notion of connectivity refers to the associated Nisnevich homotopy sheaves or equivalently to the connectivity of the Nisnevich stalks.
Further, in \opcit, he proves this property for $S=\Spec(k)$ where $k$ is a field.
This celebrated result is known as the \emph{stable $\AA^1$-connectivity theorem} \cite[Thm.~6.1.8]{Morel05} and has diverse implications:
Most of the content from Morel's monograph \cite{Morel12} is based on this result as for example the unstable $\AA^1$-connectivity theorem and its implication, the Hurewicz Theorem in \mbox{$\AA^1$-homotopy} theory \cite[Thm.~6.37]{Morel12}.
This leads to a computation of the $0$-line of the stable homotopy groups of motivic spheres as the Milnor--Witt $K$-theory $K^{\textrm{MW}}_{*}(S)$ of the base $S$ \cite[Cor.~6.43]{Morel12}.
More immediately, the $\AA^1$-connectivity theorem implies the vanishing of the negative lines which is analogous to the vanishing of the negative stable homotopy groups of the sphere in topology.

In \cite[Conj.~2]{Morel05}, Morel conjectured that the stable $\AA^1$-connectivity property holds over every regular base.
However, in \cite{Ayoub06}, Ayoub constructed a counterexample to this conjecture (see Remark~\ref{counterexample} below).

\subsection*{Aim and results}
In this paper, we want to replace Morel's stable $\AA^1$-connectivity property by the following weaker property on a base scheme $S$ of Krull-dimension $d$ which is consistent with Ayoub's counterexample:
\medskip
\begin{center}
\begin{minipage}{100ex}
\it
The $\AA^1$-lo\-cali\-zation of a $d$-connected spectrum on the smooth Nisnevich-site over $S$ is still connected. 
\end{minipage}
\end{center}
\medskip
We refer to this property as the \emph{shifted stable $\AA^1$-connectivity property}.
In other words, $S$ has this property, if $\AA^1$-localization lowers the connectivity by at most the dimension of $S$.
Question~\ref{thequestion} below asks, if every regular base scheme has this shifted stable \mbox{$\AA^1$-connectivity} property.
Morel's stable connectivity theorem is a positive answer in the case $d=0$. In the main theorem of this paper, we give a positive answer in the one-dimensional case, assuming infinite residue fields (\cf~Theorem~\ref{maintheorem}):

\begin{introtheorem}\label{1maintheorem}
A Dedekind scheme with only infinite residue fields has the shifted stable $\AA^1$-connectivity property:
If $E$ is an $i$-con\-nected spectrum, then its $\AA^1$-localization $L^\Aeins E$ is $(i-1)$-connected.
\end{introtheorem}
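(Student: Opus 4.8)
The plan is to adapt Morel's proof of the stable connectivity theorem over a field, upgrading its geometric input — Gabber's geometric presentation lemma — to a version valid over a henselian discrete valuation ring, and to keep careful track of the single extra degree of slack that the one-dimensional base introduces. First come the standard reductions. Model $L^{\Aeins}$ by the homotopy colimit $\hocolim_{n}\mathrm{Ex}^{n}$ of the iterates of the endofunctor $\mathrm{Ex}$ which applies the singular construction $\Sing^{\Aeins}$ levelwise to a $\PP^{1}$-spectrum and then a Nisnevich-fibrant replacement; since filtered homotopy colimits and Nisnevich sheafification do not change the connectivity of Nisnevich stalks, it is enough to analyse one application of $\mathrm{Ex}$ on stalks. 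A Nisnevich stalk $\mathcal{O}^{h}_{X,x}$ of a smooth $S$-scheme depends only on an \'etale neighbourhood of $x$, and such a neighbourhood over $S$ near a point over $s\in S$ coincides with one over $\Spec(\mathcal{O}^{h}_{S,s})$; so I may assume $S=\Spec(V)$ for a henselian discrete valuation ring $V$ with infinite residue field $k$ and fraction field $K$, the points to be treated lying on smooth $V$-schemes.

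Stalks at points lying over the generic point are handled by Morel's theorem itself. The open immersion $j\colon\Spec(K)\hookrightarrow\Spec(V)$ induces a pullback $j^{*}$ that preserves connectivity and commutes with Nisnevich localization, with $\Sing^{\Aeins}$, hence with $\mathrm{Ex}$ and with $L^{\Aeins}$. Therefore $j^{*}L^{\Aeins}E=L^{\Aeins}_{K}j^{*}E$, and since $j^{*}E$ is $i$-connected over $K$, Morel's theorem shows $j^{*}L^{\Aeins}E$ is $i$-connected, so all stalks of $L^{\Aeins}E$ at points over $\Spec(K)$ are $i$-connected. What remains is the closed fibre, and here the connectivity will drop by (at most) one.

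To organise this I would carry through the iteration the property that $F$ is $(i-1)$-connected over $V$ and $j^{*}F$ is $i$-connected over $K$. Every $i$-connected spectrum has this property, it is closed under filtered homotopy colimits, and — the crucial point — it is preserved by one step $\mathrm{Ex}$: the clause on $j^{*}$ is the generic-fibre discussion above, while the clause over $V$ is the substance. Granting preservation, $L^{\Aeins}E=\hocolim_{n}\mathrm{Ex}^{n}E$ has the property, hence is $(i-1)$-connected, which is the theorem. The preservation statement is where the one-dimensionality enters: the special fibre $X_{k}$ of a smooth $V$-scheme $X$ is a regular divisor which is not smooth over $V$, so its purity (``Gysin'') isomorphism carries a $\PP^{1}$-twist; splitting $\mathrm{Ex}\,F$ on $X$ according to the localization triangle for $X_{k}\hookrightarrow X\hookleftarrow X_{K}$, the $X_{K}$-part is $i$-connected (field case over $K$) and the part supported on $X_{k}$ is a single $\PP^{1}$-desuspension of a term governed by the field case over $k$ — hence $(i-1)$-connected, but no worse. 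The generic-fibre half of the invariant is exactly what keeps this twisted term from losing more than one degree as the construction is iterated.

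The geometric heart, which must be established first, is a presentation lemma over $V$: given a smooth affine $X/V$ of relative dimension $m$, a closed subscheme $Z\subsetneq X$ and a point $x$ of the closed fibre, after shrinking $X$ to a Nisnevich neighbourhood of $x$ there is an \'etale map $X\to\AA^{1}_{U}$ with $U$ smooth affine over $V$ of relative dimension $m-1$, restricting to a closed immersion on $Z$ with $Z\to U$ finite and identifying $X\setminus Z$, Nisnevich-locally around $x$, with $\AA^{1}_{U}$ minus the image of $Z$. The proof follows Gabber's normalisation-and-projection scheme, but the linear projections must be chosen good \emph{simultaneously} on $X_{K}$ and on $X_{k}$: having infinitely many $k$-points at hand, one first makes a projection good on $X_{k}$, and then henselianity of $V$ together with Nakayama lifts it to a projection over $V$ which remains good on $X_{K}$. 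I expect this to be the principal obstacle — over a field a single generic hyperplane works by a dimension count, whereas here the bad loci of the two fibres must be dodged at once, and degenerations living in the special fibre are unconstrained by the generic fibre. Once the lemma is in place, feeding it into Morel's argument runs a downward induction on the relative dimension $m$ (the relative affine line $\AA^{1}_{U}$ contributing no loss, just as over a field), terminating at henselian discrete valuation rings \'etale over $V$, and combining with the twisted-term analysis of the previous paragraph gives the required preservation statement, completing the proof.
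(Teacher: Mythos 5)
Your global architecture does overlap with the paper's in several places: the reduction to a henselian discrete valuation ring via stalks of the base (Lemma~\ref{lemmastalksofthebase}), the generic fibre disposed of by Morel's theorem since $j^*$ commutes with $\AA^1$-localization, and above all the geometric heart --- a Gabber-type presentation lemma over a henselian DVR with infinite residue field, whose statement and rough strategy (choose the linear projection good on the special fibre, where the infinite residue field supplies points, then lift along the surjective reduction map $W(\mathfrak{o})\twoheadrightarrow W(\mathbb{F})$) are essentially Theorem~\ref{thm: gabber presentation}; the obstacle you flag, that special-fibre degenerations are unconstrained by the generic fibre, is exactly what Proposition~\ref{prop: kay density lemma} handles by producing, Nisnevich-locally, an embedding with $Z$ fibrewise dense in its projective closure.

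However, the closed-fibre step, where the single degree of loss must actually be produced, does not work as you describe, and this is a genuine gap. You split $\mathrm{Ex}\,F$ along the localization triangle for $X_k\hookrightarrow X\hookleftarrow X_K$ and identify the term supported on the special fibre as ``a single $\PP^1$-desuspension of a term governed by the field case over $k$''. That identification is a purity (Gysin) isomorphism for $X_k\hookrightarrow X$, which is \emph{not} a closed immersion of smooth schemes over a common base: $X_k$ is not smooth (not even flat) over $V$, so Morel--Voevodsky purity does not apply, and what you are invoking is absolute purity for $\sigma\hookrightarrow \Spec(V)$ --- unavailable at this level of generality, and in any case a statement about $\PP^1$-spectra, whereas the theorem concerns $S^1$-spectra, where the asserted $\GG_m$-/$\PP^1$-desuspension does not exist. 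Even granting such a twist, you would still need the special-fibre term to be ``governed by the field case over $k$'', i.e.\ that $i^*$ (or $i^!$) of the partially localized spectrum retains the connectivity of $E$; nothing in the sketch supplies this, and $i^*$ for a closed immersion does not preserve Nisnevich connectivity in any evident way. Finally, your outline omits the second key homotopical ingredient the paper needs: the vanishing of the \emph{non-sheafified} homotopy classes $[\Sigma^\infty_{S^1}V_\pt[i],L^\Aeins E]$ when $\dim V=e$ and $E$ is $(i+e)$-connected (Proposition~\ref{absoluteconnectivity}), which via Lemma~\ref{lemmatoconstructW} lets one kill a germ on an open $W\subseteq V$ that still meets the special fibre. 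It is this codimension-one count, combined with $\pi_0^{\Aeins}(V/W)=0$ obtained from the presentation lemma by the $\PP^1$-contraction argument (Lemma~\ref{connectivityassuminggabber}) and fed into Proposition~\ref{mainproposition}, that accounts for the loss of exactly one degree --- not a purity twist.
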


Examples for such base schemes are algebraic curves over infinite fields in geometric settings or ${\rm Spec}(\mathbb{Z}_p^{\rm nr})$ for $\mathbb{Z}_p^{\rm nr} / \mathbb{Z}_p$ the maximal unramified extension in more arithmetic settings.

Morel's proof of the $\AA^1$-connectivity theorem needs a strong geometric input refered to as \emph{Gabber's geometric presentation lemma} and written up in \cite[Thm.~3.1.1]{CTHK97}.
In \opcit, the authors show how Gabber's presentation result leads to universal exactness of certain Cousin complexes.
In particular, they derive the Bloch--Ogus theorem and the Gersten conjecture for algebraic $K$-theory for smooth varieties over a field, as first proved by Quillen \cite[Thm.~5.11]{Quillen73}.
In Chapter \ref{chaptergabber}, we prove a version of this presentation result over a henselian discrete valuation ring with infinite residue fields (\cf~Theorem~\ref{thm: gabber distinguished square}):

\begin{introtheorem}\label{thm: gabber distinguished square2}
Let $\mathfrak{o}$ be a henselian discrete valuation ring with infinite residue field and let $\sigma$ denote the closed point of $S=\Spec(\mathfrak{o})$.
Let ${X}$ be a smooth $S$-scheme of finite type and let ${Z}\hookrightarrow {X}$ be a proper closed subscheme.
Let $z$ be a point in ${Z}$.
If $z$ lies in the special fibre $Z_\sigma$, suppose that ${Z}_\sigma \neq {X}_\sigma$.
Then, Nisnevich-locally around $z$, there exists a smooth $\mathfrak{o}$-scheme $V$ of finite type and a cartesian square 
\[
  \xymatrix{
   {X}\setminus{Z} \ar[r] \ar[d] &
   {X} \ar[d]^-p
  \\
   \mathbb{A}_V^1\setminus p({Z}) \ar[r] &
   \mathbb{A}_V^1
  }
\]
such that $p$ is \'{e}tale, the restriction $p|_Z\colon Z\hookrightarrow \mathbb{A}_V^1$ is a closed subscheme and $Z$ is finite over $V$.
In particular, this square is a Nisnevich-distinguished square.
\end{introtheorem}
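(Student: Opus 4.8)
The strategy is to reduce to the classical Gabber presentation lemma over a field (as in [CTHK97, Thm.~3.1.1]) by a careful analysis of the special fibre, and then spread out the field-level construction over $S$. The plan is as follows.

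First I would reduce to a local situation. Since the statement is Nisnevich-local around $z$, I may replace $X$ by an affine Nisnevich neighbourhood of $z$; by standard limit arguments I can even assume $X$ is affine over $S$. Distinguish two cases according to whether $z$ lies in the generic fibre $X_\eta$ or the special fibre $X_\sigma$.

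For the \emph{generic-fibre case}, $z$ is a point of the smooth scheme $X_\eta$ over the residue field $\kappa(\eta)$ (the fraction field of $\mathfrak{o}$), which is of characteristic zero or has infinitely many elements inherited from... actually $\kappa(\eta)$ need not be infinite, but here one can work after a suitable localization; the key point is that one applies the classical presentation lemma over $\kappa(\eta)$ to $(X_\eta, Z_\eta, z)$ to obtain an étale map $p_\eta\colon X_\eta \to \mathbb{A}^1_{V_\eta}$ with the required properties, and then one must \emph{spread this out}: the scheme $V_\eta$, the map $p_\eta$, the étaleness, the closed immersion $Z_\eta \hookrightarrow \mathbb{A}^1_{V_\eta}$ and the finiteness of $Z_\eta$ over $V_\eta$ are all finitely presented data, hence descend to an open neighbourhood of $\eta$ in $S$, i.e.\ to all of $S$ after shrinking (but $S$ is local, so shrinking is not allowed — instead one uses that $\mathfrak{o}$ is a \emph{henselian} DVR and that the generic point is dense, so a map defined generically with good properties, together with the separate analysis at $\sigma$, can be glued). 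This gluing across the two fibres is where the henselian hypothesis is essential.

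For the \emph{special-fibre case}, $z \in Z_\sigma \subsetneq X_\sigma$ by hypothesis. Here $X_\sigma$ is smooth over the infinite residue field $\kappa(\sigma)$, and $Z_\sigma$ is a proper closed subscheme; apply the classical Gabber lemma over $\kappa(\sigma)$ to $(X_\sigma, Z_\sigma, z)$ to get $p_\sigma\colon X_\sigma \to \mathbb{A}^1_{V_\sigma}$. Now I would lift $V_\sigma$ to a smooth $S$-scheme $V$ (possible after passing to a Nisnevich neighbourhood, since smooth morphisms lift along henselian pairs up to étale localization), lift the étale map $p_\sigma$ to an étale $S$-map $p\colon X \to \mathbb{A}^1_V$ (étale morphisms are rigid: deforming $p_\sigma$ to $p$ amounts to lifting along the nilpotent-free thickening $X_\sigma \hookrightarrow X$, which is unobstructed étale-locally, and one uses henselianity to descend from the strict henselization), and then check that the good properties of $p_\sigma$ (the closed-immersion property of $p|_Z$ and the finiteness of $Z$ over $V$) propagate from the special fibre to a neighbourhood of $z$ in $X$: finiteness is checked fibrewise by the local criterion plus properness, and the closed-immersion property follows since $Z \to \mathbb{A}^1_V$ is finite, hence proper, and is a closed immersion on the special fibre, so it is a closed immersion in a neighbourhood (here one uses that $Z$ is proper over $S$ near $z$ — but $Z$ need not be proper over $S$; rather $Z$ finite over $V$ finite-type over $S$, and the immersion property is a closed condition that holds on the fibre over $\sigma$ and thus on a neighbourhood by properness of $Z\to\mathbb{A}^1_V$).

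The main obstacle I expect is the \emph{deformation/spreading-out step}: producing a single étale $p\colon X \to \mathbb{A}^1_V$ over all of $S$ that restricts correctly on \emph{both} fibres and satisfies the finiteness and closed-immersion conditions \emph{globally} on $X$, not just fibrewise. Over a field this is a one-shot construction; over a DVR one has genuinely two strata to control simultaneously, and the interaction between the generic construction and the special construction must be reconciled. The henselian hypothesis is what makes this possible — it lets one pass freely to the strict henselization (where obstruction groups vanish and lifting problems for smooth/étale morphisms become trivial), perform the construction there, and then descend, using that Nisnevich-local statements are insensitive to such passage. The condition $Z_\sigma \neq X_\sigma$ is exactly what guarantees that the point $z$ (when in the special fibre) can be separated from $X_\sigma$ inside $\mathbb{A}^1_{V_\sigma}$, i.e.\ that the classical lemma applies nontrivially on the special fibre; without it, $p_\sigma$ could not realize $Z_\sigma$ as finite over $V_\sigma$.
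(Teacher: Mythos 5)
Your special-fibre strategy---run the classical Gabber lemma over the residue field for $(X_\sigma,Z_\sigma,z)$, lift $p_\sigma$ to an $S$-map $p$, and propagate its properties to a neighbourhood---breaks down at exactly the point you flag yourself and do not repair: the finiteness of $Z$ over $V$ (and with it the closed-immersion claim for $p|_Z$). The morphism $Z\to V$ is merely affine of finite type, so finiteness of $Z_\sigma$ over $V_\sigma$ does not propagate, and your attempt to rescue the closed-immersion property ``by properness of $Z\to\mathbb{A}^1_V$'' is circular, since that properness is equivalent to the finiteness at stake. Example~\ref{ex: not fibrewise dense} in the paper, $\mathfrak{o}[x_1]/(\pi x_1^2+x_1+1)$, is precisely this failure mode: a hypersurface that is finite over $\mathbb{F}$ on the special fibre and quasi-finite over $S$, yet not finite over $S$, because a point escapes to infinity --- equivalently, the special fibre of the projective closure $\bar{Z}\subset\mathbb{P}^N_S$ can be strictly larger than the closure of $Z_\sigma$. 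Consequently no choice of projection made purely on the special fibre can control the behaviour of $Z$ at infinity over all of $S$, which is what finiteness of a linear projection requires; the obstruction is global over $\mathfrak{o}$, not infinitesimal, and there is no ``rigidity/deformation'' argument available ($X_\sigma\hookrightarrow X$ is not a nilpotent thickening).

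The paper's proof removes this obstruction before choosing any projection: Nisnevich-locally around $z$ it constructs (Proposition~\ref{prop: kay density lemma}, based on Kai's Noether normalization over a Dedekind base \cite{Kai15}) a closed embedding $X\hookrightarrow\mathbb{A}^N_S$ for which $Z$ is \emph{fibrewise dense} in its projective closure $\bar{Z}$. Only then does $\bar{Z}_\sigma$ coincide with the closure of $Z_\sigma$, so the open locus $W_1\subseteq\mathfrak{E}_n$ of linear projections whose indeterminacy locus misses $\bar{Z}$ has nonempty special fibre (Grayson's argument over the infinite field $\mathbb{F}$), henselianity of $\mathfrak{o}$ lifts an $\mathbb{F}$-point to $\underline{u}\in W_1(\mathfrak{o})$, and finiteness of $p_{(u_1,\dots,u_{n-1})}|_Z$ over all of $S$ follows from Lemma~\ref{lem: shafarevich} (projectivity plus fibrewise quasi-finiteness). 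The \'etale and radicial conditions are indeed special-fibre conditions and are handled roughly as you suggest, but the finiteness is a genuinely global choice over $\mathfrak{o}$, not a lift from $\sigma$; this is the missing idea in your plan. Two smaller points: the generic-fibre case needs no spreading out or gluing, since $X_\eta$ is open in $X$ and $\Spec(k)\to S$ is an open immersion, so \cite{CTHK97} applies directly over $k$; and your worry that $k$ might be finite is unfounded, as the fraction field of a discrete valuation ring is always infinite.
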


The proof is based on \cite[Thm.~3.1.1]{CTHK97} combined with a Noether normalization over a Dedekind base (\cf\ \cite[Thm.~4.6]{Kai15}).

Apart from this geometric input to the proof of Theorem~\ref{1maintheorem}, we need a second key ingredient of a more homotopical kind:
In Chapter~\ref{chapterobjectwise}, we examine a vanishing result for the non-sheafified homotopy classes of the $\AA^1$-localization of a connected spectrum. This is a slight generalization of the argument in \cite[Lem.~4.3.1]{Morel05} to arbitrary noetherian base schemes of finite Krull-dimension.
As a byproduct, we obtain that the $S^1$- and the $\PP^1$-homotopy t-structure over any base scheme is left complete, \ie, a presheaf of spectra is recovered as the homotopy limit over its Postnikov truncations (see Corollary \ref{tstructurenondegenerate} and \ref{homotopytstructurenondegenerate}).

\subsection*{Acknowledgement}
We would like to thank Peter Arndt, Joseph Ayoub, Elden Elmanto, Armin Holschbach, Fabien Morel, George Raptis, Markus Spitzweck and Georg Tamme for helpful discussions or comments.
Moreover, we would like to thank the anonymous referee for helpful remarks and comments.

\section{Preliminaries}
\label{sec:preliminaries}

\noindent
In this paper, our base scheme $S$ is always a noetherian scheme of finite Krull-dimension.
Let $\Sm_S$ be the category of smooth schemes of finite type over $S$. The category $\Sm_S$ is essentially small and sometimes we choose a small skeleton implicitly without mentioning.
Let $\sPre_\pt(S)$ be the category of pointed simplicial presheaves on $\Sm_S$.
We will mostly ignore $S$ in the notation.
For an object $U\in \Sm_S$ let $U_\pt$ denote the presheaf $\hom_{\Sm_S}(-,U)$ considered as a discrete simplicial set with an additional disjoint basepoint.
Whenever we speak of a category having all limits and colimits we actually mean that it has all \emph{small} limits and all \emph{small} colimits.
\smallskip

\subsection*{Model structures}
In contrast to the foundational address \cite{MV99} of $\AA^1$-homotopy theory, we use projective analogues of the unstable model structures and obtain the (pointed) \emph{objectwise}, \emph{Nisnevich-local} and \emph{$\AA^1$-Nisnevich-local} model structure (\cf~\cite[Sec.2]{DRO03b}).
Throughout the whole text, let $L^\ob$, $L^\simpl$ and $L^\Aeins$ denote fixed (pointed) objectwise, Nisnevich-local and $\AA^1$-Nisnevich-local fibrant replacement functors, respectively.
For a symbol $\tau\in\{ob, s, \AA^1\}$, a non-negative integer $n$ and $F\in \sPre_\pt$, define the \emph{$n$-th $\tau$-homotopy sheaf} $\pi^\tau_n(F)$ of $F$ as the Nisnevich-sheafification of the \emph{$n$-th $\tau$-homotopy presheaf}
\[
[(-)_\pt\wedge S^n,L^\tau F].
\]
Here, the brackets denote (pointed) objectwise homotopy classes.
Note that $\pi_n^\ob\hspace{1pt}(F)\cong \pi_n^\simpl(F)$.
Whenever the objectwise model structure is considered, we omit the symbol $\phantom{}^\ob~$ from the notation.
\smallskip

Let $\Spt_{S^1}(S)$ be the category of (non-symmetric) $S^1$-spectra on the category $\sPre_\pt(S)$ \cite[Def.~1.1]{Hovey01}.
The functor $(-)_0$ sending an $S^1$-spectrum to its zeroth level and the $S^1$-suspension spectrum functor $\Sigma^\infty_{S^1}$ fit into an adjunction $\Sigma^\infty_{S^1}\colon\sPre_\pt\adjoint \Spt_{S^1}\colon(-)_0$.
For an integer $n\geq 0$, there is also an adjunction $[-n]\colon\Spt_{S^1}\adjoint \Spt_{S^1}\colon[n]$ of shift functors defined for an integer $n$ by $E[n]_m:=E_{n+m}$ whenever $n+m\geq0$ and $E[n]_m:=*$ otherwise.
Following the general procedure of \cite{Hovey01}, we equip the category $\Spt_{S^1}(S)$ with stable model structures (\cf~\cite[Def.~3.3]{Hovey01}) having homotopy categories $\SH_{S^1}^\ob\hspace{2pt}(S)$, $\SH_{S^1}^\simpl\hspace{2pt}(S)$ and $\SH_{S^1}^\Aeins(S)$, respectively.
The two above-mentioned adjunctions turn into Quillen adjunctions.
Each of these stable homotopy categories is a triangulated category with distinguished triangles given by the homotopy cofibre sequences \cite[Prop.~7.1.6]{Hovey99}.
In fact, by choosing symmetric spectra as a more elaborate model, these homotopy categories carry the structure of a closed symmetric monoidal category with a compatible triangulation in the sense of \cite[Def.~4.1]{May01}.
For details of this construction we refer to \cite{Hovey01}, \cite{MSS} and \cite{Ayoub07}.
There are functors
\begin{align*}
-\wedge \Sigma_{S^1}^\infty(-)  \colon & \Spt_{S^1}\times \sPre_\pt\hspace{0.7ex}\to \Spt_{S^1}\\
\hhom(\Sigma_{S^1}^\infty(-),-) \colon &  \sPre_\pt^\op\times \Spt_{S^1}\to \Spt_{S^1}
\end{align*}
defined in the obvious way.
For a cofibrant $F\in\sPre_\pt$, they fit into a Quillen adjunction
\[
-\wedge \Sigma_{S^1}^\infty F \colon \Spt_{S^1}\adjoint\Spt_{S^1} \colon \hhom(\Sigma_{S^1}^\infty F,-)
\]
whose derived adjunction models the monoidal structure from before.

Let $\Omega_{S^1}$ denote the functor $\hhom(\Sigma_{S^1}^\infty S^1,-)$.
We mention that a concrete fibrant replacement functor for the stable $\tau$-model structure on $\Spt_{S^1}$ is given by
\begin{equation}
\label{taufunctor}
\Theta_{S^1}^\tau E=\colim((L^\tau E)\to \Omega_{S^1}(L^\tau E)[1]\to (\Omega_{S^1})^2(L^\tau E)[2]\to\ldots) 
\end{equation}
where $\tau\in\{ob, s, \AA^1\}$ and where the application of $L^\tau$ to a spectrum is levelwise \cite[Thm.~4.12]{Hovey01}.
We write $\Omega^\infty_{S^1}\colon \Spt_{S^1}\to \sPre_\pt$ for the composition of this fibrant replacement functor with $(-)_0$.

As for the unstable structures, we define the \emph{$n$-th stable $\tau$-homotopy sheaf} $\pi^\tau_n(E)$ of a spectrum $E\in \Spt_{S^1}$ as the Nisnevich-sheafification of the \emph{$n$-th stable $\tau$-homotopy presheaf}
\[
[\Sigma_{S^1}^\infty(-_\pt)[n],L^\tau E].
\]
Here, the brackets denote the morphism sets of $\SH^\ob_{S^1}$.
Since it will be evident from the context if the unstable or the stable homotopy sheaf is considered, we do not introduce an extra decoration.

We use the following explicit model for $L^\Aeins$ in the stable context introduced in \cite[Lem~4.2.4]{Morel04}.

\begin{lemma}[Morel]\label{specialreplacement}
Let $S$ be an arbitrary base scheme.
For each integer $k\geq 0$, we set $L^k (E) := \hhom(F^{\wedge k}, L^s (E))$ with $F:=\Sigma^\infty_{S^1}C[-1]$ where $C$ is a cofibrant replacement of the cofibre of the morphism 
\[S^0 \xrightarrow{0,1} \AA^1\]
in $\sPre_\pt$.
Then the functor $L^{\infty}\colon \Spt_{S^1}\to \Spt_{S^1}$ defined by
\[
L^\infty (E) := \underset{k\to \infty}{\hocolim}~ L^k  (E)
\]
is a fibrant replacement functor for the stable $\AA^1$-Nisnevich-local model.
\end{lemma}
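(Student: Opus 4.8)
The plan is to verify the three defining properties of a fibrant replacement functor for the stable $\AA^{1}$-Nisnevich-local model structure: that $L^{\infty}$ carries a natural transformation from the identity, that $L^{\infty}(E)$ is fibrant for every $E$, and that the structure map $E\to L^{\infty}(E)$ is a stable $\AA^{1}$-equivalence. The first is essentially built in: the transition maps $L^{k}(E)\to L^{k+1}(E)$ will be taken to be induced by the canonical morphism $F=\Sigma^{\infty}_{S^{1}}C[-1]\to\Sigma^{\infty}_{S^{1}}S^{0}$ --- obtained by applying $\Sigma^{\infty}_{S^{1}}(-)[-1]$ to the connecting map $C\to S^{1}$ of the cofibre sequence $S^{0}\to\AA^{1}\to C$ --- via $F^{\wedge k+1}=F\wedge F^{\wedge k}\to\Sigma^{\infty}_{S^{1}}S^{0}\wedge F^{\wedge k}=F^{\wedge k}$, so that $L^{\infty}(E)=\hocolim_{k}L^{k}(E)$ is functorial and one precomposes its colimit structure map with $E\to L^{s}(E)=L^{0}(E)$. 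Throughout I would work with cofibrant models, identify $F^{\wedge k}$ with $\Sigma^{\infty}_{S^{1}}(C^{\wedge k})[-k]$ so that $\hhom(F^{\wedge k},-)$ is, up to the shift $[k]$, an instance of $\hhom(\Sigma^{\infty}_{S^{1}}(-),-)$, and take $L^{s}$ to yield stably Nisnevich-local fibrant spectra.

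Next I would establish that $L^{\infty}(E)$ is stably Nisnevich-local fibrant. Since $C^{\wedge k}$ is cofibrant, $-\wedge\Sigma^{\infty}_{S^{1}}(C^{\wedge k})$ is left Quillen for the stable Nisnevich-local structure, so its right adjoint --- and, composing with the right Quillen shift $[k]$, also $\hhom(F^{\wedge k},-)$ --- preserves stably Nisnevich-local fibrant objects; hence each $L^{k}(E)$ is one. The homotopy colimit then remains stably Nisnevich-local fibrant because filtered homotopy colimits of $S^{1}$-spectra are computed levelwise and commute both with $\Omega_{S^{1}}=\hhom(\Sigma^{\infty}_{S^{1}}S^{1},-)$ (as $S^{1}$ is a finite simplicial set) and with the finite homotopy limits that express Nisnevich descent over elementary distinguished squares, so that the $\Omega$-spectrum condition and levelwise descent are inherited by $L^{\infty}(E)$.

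The heart of the argument --- and the step I expect to be the main obstacle --- is $\AA^{1}$-locality of $L^{\infty}(E)$. Applying $\Sigma^{\infty}_{S^{1}}(-)[-1]$ to $S^{0}\xrightarrow{0,1}\AA^{1}\to C$ and rotating gives a distinguished triangle $F\to\Sigma^{\infty}_{S^{1}}S^{0}\to G\to F[1]$ in $\SH^{\simpl}_{S^{1}}(S)$, where $G:=\Sigma^{\infty}_{S^{1}}(\AA^{1},1)$ is the suspension spectrum of $\AA^{1}$ based at the rational point $1$; applying $\hhom(-,W)$ produces, naturally in a spectrum $W$, a distinguished triangle
\[
\hhom(G,W)\longrightarrow W\xrightarrow{\ \eta_{W}\ }\hhom(F,W)\longrightarrow\hhom(G,W)[1],
\]
and a chase of the adjunctions shows that, under $\hhom(F,\hhom(F^{\wedge k},W))\cong\hhom(F^{\wedge k+1},W)$, the map $\eta_{L^{k}(E)}$ is the $k$-th transition map of the telescope. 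Now $C$ is a finite homotopy colimit of representable pointed presheaves, so $F$ is a finite cell spectrum and $\hhom(F,-)$ commutes with filtered homotopy colimits; therefore
\[
\hhom\bigl(F,L^{\infty}(E)\bigr)\ \simeq\ \hocolim_{k}\hhom\bigl(F,L^{k}(E)\bigr)\ \simeq\ \hocolim_{k}L^{k+1}(E)\ \simeq\ L^{\infty}(E),
\]
and under this chain $\eta_{L^{\infty}(E)}$ becomes the canonical equivalence comparing the telescope with its shift by one, hence an equivalence. Consequently its homotopy fibre $\hhom(G,L^{\infty}(E))$ is stably contractible in $\SH^{\simpl}_{S^{1}}(S)$, and being stably Nisnevich-local fibrant (for a cofibrant model of $G$) it is objectwise weakly contractible. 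Since $\operatorname{cofib}\bigl(\Sigma^{\infty}_{S^{1}}(U\times\AA^{1})_{\pt}\to\Sigma^{\infty}_{S^{1}}U_{\pt}\bigr)\simeq\Sigma^{\infty}_{S^{1}}U_{\pt}\wedge G[1]$, this vanishing says precisely that $\Sigma^{\infty}_{S^{1}}(U\times\AA^{1})_{\pt}[n]\to\Sigma^{\infty}_{S^{1}}U_{\pt}[n]$ induces a bijection on maps into $L^{\infty}(E)$ for all $U\in\Sm_{S}$ and $n\in\ZZ$; as these morphisms together with the stable Nisnevich-local equivalences generate the stable $\AA^{1}$-Nisnevich-local equivalences, the stably Nisnevich-fibrant spectrum $L^{\infty}(E)$ is fibrant for the stable $\AA^{1}$-Nisnevich-local model structure. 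The difficulty here is precisely to recognise that iterating $\hhom(F,-)$ followed by a filtered homotopy colimit is a device forcing $\eta_{W}$ --- whose cofibre, by the triangle coming from $S^{0}\to\AA^{1}\to C$, controls the failure of $\AA^{1}$-invariance --- to become invertible; this hinges on the compactness of $F$.

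Finally I would check that $E\to L^{\infty}(E)$ is a stable $\AA^{1}$-equivalence. The map $E\to L^{s}(E)=L^{0}(E)$ is a stable Nisnevich-local equivalence, hence a stable $\AA^{1}$-equivalence, so it suffices to treat $L^{0}(E)\to L^{\infty}(E)=\hocolim_{k}L^{k}(E)$, for which it is enough that each transition map $\eta_{L^{k}(E)}$ is a stable $\AA^{1}$-equivalence. Its homotopy fibre is $\hhom(G,L^{k}(E))$, so the remaining point is that $\hhom(G,Z)$ is stably $\AA^{1}$-acyclic for every $S^{1}$-spectrum $Z$; this follows from the pointed $\AA^{1}$-contraction of $(\AA^{1},1)$, namely the algebraic homotopy $(x,t)\mapsto(1-t)+tx$, which fixes the basepoint and so defines a pointed $\AA^{1}$-homotopy $(\AA^{1},1)\wedge\AA^{1}_{\pt}\to(\AA^{1},1)$ from the zero map to the identity: applying $\Sigma^{\infty}_{S^{1}}$ and $\hhom(-,Z)$ and transposing along $-\wedge\AA^{1}_{\pt}\dashv\hhom(\Sigma^{\infty}_{S^{1}}\AA^{1}_{\pt},-)$ yields an $\AA^{1}$-homotopy $\hhom(G,Z)\wedge\AA^{1}_{\pt}\to\hhom(G,Z)$ from the zero map to the identity, whence $\id_{\hhom(G,Z)}=0$ in $\SH^{\Aeins}_{S^{1}}(S)$, i.e.\ $\hhom(G,Z)\simeq0$ there. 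Then each $\eta_{L^{k}(E)}$, and so their homotopy colimit $L^{0}(E)\to L^{\infty}(E)$, is a stable $\AA^{1}$-equivalence, which together with $E\to L^{0}(E)$ completes the verification.
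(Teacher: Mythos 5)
The paper does not actually prove this lemma: it is quoted from Morel (\cite[Lem.~4.2.4]{Morel04}), and only the mechanism you exploit is recorded later, in Remark~\ref{rem: Lk exact triangle}. Your argument is, in essence, the standard (Morel's) one, and I find it correct: the two decisive points --- that compactness of $F$ identifies $\hhom(F,L^\infty(E))$ with the shifted telescope, so that $\eta_{L^\infty(E)}$ is invertible and hence $\hhom(\Sigma^\infty_{S^1}\AA^1,L^\infty(E))$ is trivial in $\SH^\simpl_{S^1}$, giving $\AA^1$-locality of the stably Nisnevich-fibrant spectrum $L^\infty(E)$; and that the pointed contraction $(x,t)\mapsto (1-t)+tx$ makes $\hhom(\Sigma^\infty_{S^1}\AA^1,Z)$ stably $\AA^1$-acyclic for every $Z$, so that each transition map, and hence $E\to L^\infty(E)$, is a stable $\AA^1$-equivalence --- are exactly the right ones, and your identification of the transition maps with $\eta_{L^k(E)}$ via the hom-smash adjunction matches the triangle in Remark~\ref{rem: Lk exact triangle}. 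Two caveats you should make explicit. First, ``arbitrary base scheme'' here still means noetherian of finite Krull dimension (the paper's standing hypothesis), and you use this tacitly both where you claim that filtered homotopy colimits of stably Nisnevich-local fibrant spectra remain local (descent being detected by distinguished squares) and in the compactness/commutation step for $\hhom(F,-)$; under that hypothesis both steps are fine and are the same facts the paper itself uses in Lemma~\ref{limitbasechange}. Second, with the paper's conventions the third term of the triangle is $\Sigma^\infty_{S^1}\AA^1$ with $\AA^1$ pointed by a rational point, which is isomorphic (by translation) to your $G=\Sigma^\infty_{S^1}(\AA^1,1)$, so your identification is harmless; all that is needed is that $G$ is the suspension spectrum of an $\AA^1$-contractibly pointed copy of $\AA^1$, which is what your contraction argument uses.
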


\begin{remark}\label{rem: Lk exact triangle}
Likewise, the spectrum $F$ from the above Lemma~\ref{specialreplacement} may be defined by the distinguished triangle
\[
F\to \Sigma^\infty_{S^1} S^0\xrightarrow{0,1}\Sigma^\infty_{S^1} \AA^1.
\]
Let $k\geq 1$ be an integer.
After rotation and smashing with the spectrum $F^{\wedge (k-1)}$, the above triangle becomes $\Sigma^\infty_{S^1} \AA^1 \wedge F^{\wedge (k-1)}[-1]\to F^{\wedge k}\to F^{\wedge (k-1)}$.
Applying $\hhom(-,L^s(E))$ yields the distinguished triangle 
\[
L^{k-1}(E) \to L^k(E) \to \hhom(\Sigma^\infty_{S^1}\AA^1,L^{k-1}(E)[1]). 
\]
Here, $L^{k-1}(E)[1]\simeq L^{k-1}(E[1])$ holds by definition and homotopy-exactness of $L^\simpl$.
\end{remark}

\subsection*{Basechange}
We briefly recall the construction of base change functors in $\AA^1$-homotopy theory.
For details consider the monograph \cite{Ayoub07} and \cite{Hu01}.\smallskip

Let $f\colon R\to S$ be a morphism between noetherian schemes of finite Krull-dimension. There is an adjunction
\[
f^*\colon \sPre_\pt(S)\adjoint \sPre_\pt(R)\colon f_*
\]
where the \emph{direct image} functor $f_*$ is defined by $(f_*G)(-):=G(-\times_S R)$ and where the left adjoint \emph{inverse image} is determined by $f^*(U_\pt):=(U\times_S R)_\pt$ for $U\in\Sm_S$.
The functor $f^*$ is strong symmetric monoidal with respect to the smash product and there is a natural isomorphism $f_*\hhom_\pt(f^*F,G)\cong \hhom_\pt(F,f_*G)$~\cite[(3.4)]{FHM03}.
If the morphism $f\colon R\to S$ is smooth and of finite type, the inverse image functor has a left adjoint
\[
f_\sharp\colon \sPre_\pt(R)\adjoint \sPre_\pt(S)\colon f^*
\]
determined by $f_\sharp(V_\pt)=f_\sharp(V_\pt\to R):=(V\sqcup S\to R\sqcup S\to S)=f_\sharp^{\text{\tiny unpointed}}(V)_\pt$.
We emphasize that throughout the whole text, the functor $f_\sharp$ is considered in this pointed sense: It does not only post-compose with $f$ but also quotients out the base-point along $f$.
In the case of a smooth $f$ of finite type, the inverse image is given by $f^*F=F\wedge R_\pt$, one has a \emph{projection formula} $f_\sharp(G\wedge f^*F)\cong f_\sharp G\wedge F$ (\cf~\eg~\cite[Sec.~5.1]{Hoyois17}) and a natural isomorphism 
\begin{equation}\label{fupperstarandhom}
f^*\hhom_\pt(A,B)\cong \hhom_\pt(f^*A,f^*B)
\end{equation}
by~\cite[Prop.~4.1]{FHM03}.
Note that, since S is noetherian, any open immersion $R\hookrightarrow S$ is smooth and of finite type.

The adjunction $(f^*,f_*)$ is a Quillen adjunction for the objectwise, the Nisnevich-local and the $\AA^1$-Nisnevich-local model structures.
If $f\colon R\to S$ is smooth and of finite type, then the adjunction $(f_\sharp, f^*)$ is a Quillen adjunction for the objectwise, the Nisnevich-local and the $\AA^1$-Nisnevich-local model structures as well and $f^*$ preserves all weak equivalences (\cf\ \cite[Thm.~4.5.10]{Ayoub07}).

\begin{remark}\label{modelstructuresremark}
For the projective versions of the model structures, it is easy to see that $f^*$ and $f_\sharp$ preserve the generating cofibrations and hence all cofibrations.
By the same reason, the objectwise acyclic cofibrations are preserved, so $(f_\sharp, f^*)$ and $(f^*,f_*)$ are Quillen adjunctions for the objectwise structures.
In order to see that the right adjoints $f_*$ and $f^*$ preserve fibrations for the Nisnevich-local and the $\AA^1$-Nisnevich-local model structure, it suffices to show that they preserve fibrations between fibrant objects \cite[Cor.~A.2]{Dugger01}.
As the right adjoints preserve objectwise fibrations, it suffices to show that they preserve fibrant objects.
The fibrant objects of a Bousfield localization may be detected by a particular set $J'$ of acyclic cofibrations \cite[Lem.~3.3.11]{Hirschhorn03}.
It remains to be shown that the left adjoints preserve these acyclic cofibrations in $J'$ which is straightforward (\cf\ \cite[Def.~2.14]{DRO03b}).
\end{remark}

In particular, Remark \ref{modelstructuresremark} implies the following Lemma.

\begin{lemma}\label{interchangebysmooth}
Let $f\colon R\to S$ be a smooth morphism of finite type. For each $F\in \sPre_\pt(S)$, there are canonical (objectwise) weak equivalences 
 \[
 L^s(f^* F)\sim f^*(L^s F) ~\text{ and }~ L^\Aeins(f^* F)\sim f^*(L^\Aeins F)
 \]
in $\sPre_\pt(R)$.
\end{lemma}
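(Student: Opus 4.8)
The plan is to exploit the fact that for a smooth morphism $f$ of finite type, $(f_\sharp, f^*)$ is a Quillen adjunction for all three model structures and, crucially, that $f^*$ preserves \emph{all} weak equivalences (as recalled from \cite[Thm.~4.5.10]{Ayoub07} just before the statement). First I would take an objectwise-fibrant replacement, i.e.\ the natural objectwise weak equivalence $F \to L^s F$, which is a Nisnevich-local (and a fortiori objectwise) weak equivalence into a Nisnevich-local fibrant object. Applying $f^*$ gives an objectwise weak equivalence $f^* F \to f^*(L^s F)$. By Remark \ref{modelstructuresremark}, the right adjoint $f^*$ preserves Nisnevich-local fibrant objects, so $f^*(L^s F)$ is Nisnevich-local fibrant. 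Hence $f^* F \to f^*(L^s F)$ exhibits $f^*(L^s F)$ as a Nisnevich-local fibrant replacement of $f^* F$; comparing with the chosen functorial replacement $f^* F \to L^s(f^* F)$ via the universal property (lifting an objectwise weak equivalence against a fibration between fibrant objects) yields the canonical objectwise weak equivalence $L^s(f^* F) \sim f^*(L^s F)$.

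For the $\AA^1$-local statement I would run the same argument one level up. The morphism $F \to L^\Aeins F$ is an $\AA^1$-Nisnevich-local weak equivalence with $\AA^1$-local fibrant target. Applying $f^*$ and using Remark \ref{modelstructuresremark} again — $f^*$ preserves $\AA^1$-Nisnevich-local fibrant objects — we get that $f^*(L^\Aeins F)$ is $\AA^1$-local fibrant. It remains to check that $f^* F \to f^*(L^\Aeins F)$ is an $\AA^1$-Nisnevich-local weak equivalence, not merely that the target is fibrant. This follows because $f^*$ sends Nisnevich-local weak equivalences to (objectwise, hence) Nisnevich-local weak equivalences and, being strong monoidal with $f^*(\AA^1 \times U) \cong \AA^1 \times f^* U$ compatibly, it sends the generating $\AA^1$-equivalences to $\AA^1$-equivalences; concretely, $f^*$ preserves all weak equivalences for the objectwise structure and carries the localizing maps defining $L^\Aeins$ to maps of the same type over $R$. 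Thus $f^* F \to f^*(L^\Aeins F)$ is an $\AA^1$-local acyclic map into an $\AA^1$-local fibrant object, i.e.\ an $\AA^1$-local fibrant replacement, and the same universal-property comparison gives $L^\Aeins(f^* F) \sim f^*(L^\Aeins F)$.

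The only genuine point requiring care — the main obstacle — is verifying that $f^*$ takes $\AA^1$-Nisnevich-local weak equivalences of $\sPre_\pt(S)$ to weak equivalences of $\sPre_\pt(R)$, equivalently that $f^*$ descends to the $\AA^1$-homotopy categories in a way compatible with the localization functors. This is exactly where smoothness of $f$ is used: it is not true for arbitrary $f$ that $f^*$ is homotopical for the local structures, but for smooth $f$ of finite type it is, by \cite[Thm.~4.5.10]{Ayoub07}. Granting that input, everything else is the standard uniqueness-of-fibrant-replacement bookkeeping: any two fibrant replacements of the same object are connected by an objectwise weak equivalence, obtained by lifting in the relevant model structure, which is why the comparison maps produced above are canonical.
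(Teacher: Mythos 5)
Your proposal is correct and takes essentially the same route as the paper, which deduces the lemma directly from Remark~\ref{modelstructuresremark} (so that the right Quillen functor $f^*$ preserves Nisnevich-local and $\AA^1$-Nisnevich-local fibrant objects) together with the quoted fact that $f^*$ preserves all weak equivalences, and then compares the two fibrant replacements of $f^*F$. The only slip to repair is in your first paragraph: $F\to L^s F$ is a Nisnevich-local, not an objectwise, weak equivalence (the ``a fortiori'' runs the other way), so $f^*F\to f^*(L^s F)$ is a priori only a local weak equivalence---which is all you need, since a local weak equivalence between local fibrant objects is an objectwise one, exactly as in your $\AA^1$-case discussion.
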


The spectrum $S_s^h:=\Spec(\mathcal{O}_{S,s}^h)$ of a henselian local ring of a point $s\in S$ is usually not of finite type over $S$.
Hence, Lemma~\ref{interchangebysmooth} does not apply directly to the canonical morphism $\mathfrak{s}\colon S_s^h\to S$.
Instead, we treat $S_s^h$ as a cofiltered limit of the diagram $\underline D$ given by the affine Nisnevich neighbourhoods of $s$ in $S$ and invoke the following Lemma.

\begin{lemma}\label{limitbasechange}
Let $d\colon D\to S$ be a noetherian $S$-scheme of finite Krull-dimension.
Suppose $d$ is the limit of a cofiltered diagram $\underline D\colon\I\to \Sm_S$ with affine transition morphisms, where $d_i\colon D_i\to S$ denotes the structure morphism of each $D_i:=\underline D(i)$.
Assume that each $D_i$ is quasi-separated.
Let $V \to D$ be an element of $\Sm_D$.
Then, the following statements hold:
\begin{enumerate}
 \item\label{limitbasechange1} There is a cofinal functor $\I_V \to \I$, a cofiltered diagram $\underline{V}\colon\I_V\to \Sm_S$ with affine transition morphisms and a natural transformation $\underline{V} \to \underline{D}_{\mid \I_V}$ inducing $V \to D$ on the limit over $\I_V$ in $\Sch_S$.
 \item\label{limitbasechange2} For $V\to \underline{V}$ as in \eqref{limitbasechange1} and each $F\in \sPre_\pt(S)$, the morphism $\Gamma(\underline{V},d^*F) \to \Gamma(V,d^*F)$ of diagrams induces a canonical natural isomorphism
  \[
   \Gamma(V,d^*F) \cong \underset{i\in {\I_V}}{\colim}\hspace{2pt}\Gamma(V_i,d_i^*F).
  \]
 \item\label{limitbasechange3} For $V\to \underline{V}$ as in \eqref{limitbasechange1} and each $F\in \sPre_\pt(S)$, there is a canonical natural isomorphism of pointed (objectwise) homotopy classes
 \[
    [V_\pt , d^*F]\cong \underset{i\in {\I_V}}{\colim}\hspace{2pt} [V_{i\pt}, d_i^* F] 
 \]  
\item\label{limitbasechange4} In \eqref{limitbasechange1}, open embeddings, \'{e}tale morphisms, smooth morphisms \resp\ Nisnevich-distinguished squares in $\Sm_D$ can be approximated by sectionwise open embeddings, \'{e}tale morphisms, smooth morphisms \resp\ Nisnevich-distinguished squares in $\Sm_S$.
 \item\label{limitbasechange5} For each $F\in \sPre_\pt(S)$, there are canonical (objectwise) weak equivalences 
 \[
 L^s(d^* F)\sim d^*(L^s F) ~\text{ and }~ L^\Aeins(d^* F)\sim d^*(L^\Aeins F)
 \]
in $\sPre_\pt(D)$.
\end{enumerate}
\end{lemma}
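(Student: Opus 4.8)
The plan is to prove the five statements more or less in the order stated, using standard limit-of-schemes techniques (in the spirit of \cite[\S8]{EGAIV3}) together with the fact that $\sPre_\pt(S)$ is built from the small diagram category $\Sm_S$ via representables.

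For \eqref{limitbasechange1}, I would invoke the standard approximation result for the cofiltered system $\underline D$: since each $D_i$ is quasi-separated and noetherian and the transition maps are affine, any finitely presented $D$-scheme $V$, and in particular any smooth one, descends to some $V_{i_0}\to D_{i_0}$, and smoothness is a finitely presented property so it descends after possibly enlarging $i_0$. Setting $\I_V$ to be the undercategory $i_0/\I$ gives a cofinal subcategory, and pulling $V_{i_0}$ back along the transition maps $D_i\to D_{i_0}$ for $i\ge i_0$ yields the diagram $\underline V$ with affine transition morphisms and a natural transformation $\underline V\to \underline D_{\mid\I_V}$ whose limit is $V\to D$; this is because limits commute with the (affine, hence representable) base changes involved.

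Statement \eqref{limitbasechange2} is the heart of the matter. For $F$ a representable presheaf $U_\pt$ with $U\in\Sm_S$ this is exactly the statement that $\Hom_{\Sch_S}(\varprojlim_i V_i, U)\cong\varinjlim_i\Hom_{\Sch_S}(V_i,U)$, which holds because $U$ is of finite type over $S$ and the $V_i$ form a cofiltered system with affine transition maps; one then observes $\Gamma(V,d^*U_\pt)=\Hom_{\Sm_D}(V, d^*U)=\Hom_{\Sch_S}(V,U)$ by the adjunction $(d^*,d_*)$ together with $d^*U_\pt=(U\times_S D)_\pt$, and similarly at each finite stage. A general $F\in\sPre_\pt(S)$ is a colimit of representables, and both sides of the claimed isomorphism commute with the relevant colimits: the left side because $d^*$ and sections over the fixed object $V$ preserve colimits of presheaves, the right side because filtered colimits commute with colimits. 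Hence the representable case suffices. Statement \eqref{limitbasechange3} is then obtained by applying \eqref{limitbasechange2} levelwise in the simplicial direction: pointed objectwise homotopy classes $[V_\pt,d^*F]$ are computed as $\pi_0$ of the simplicial mapping space, which at each simplicial level is a value of the form $\Gamma(V,d^*F)$ up to the standard cotensor with a finite simplicial set, and a filtered colimit of Kan complexes computes the homotopy colimit, so $\pi_0$ commutes with the filtered colimit over $\I_V$; one should also note that each $V_{i\pt}$ is a finitely presented object so that mapping out of it commutes with filtered colimits on the nose.

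For \eqref{limitbasechange4}, each of the listed properties (open immersion, \'etale, smooth, and the conjunction of conditions defining a Nisnevich-distinguished square, namely a cartesian square with one leg an open immersion, the other \'etale, and an isomorphism on the reduced closed complements) is finitely presented and hence descends through the system by the same EGA-type arguments used for \eqref{limitbasechange1}; the cartesianness of a square of finitely presented schemes likewise descends. Finally, \eqref{limitbasechange5} follows by combining \eqref{limitbasechange3} and \eqref{limitbasechange4}: the explicit Nisnevich-local and $\AA^1$-Nisnevich-local fibrant replacement functors are built by transfinite iteration of small-object-type constructions indexed on (hyper)covers and on the interval object $\AA^1$, all of which are finitely presented data over $S$; by \eqref{limitbasechange4} a Nisnevich-distinguished square or an $\AA^1$-homotopy over $D$ is approximated over the $D_i$, and by \eqref{limitbasechange3} the relevant homotopy classes are the filtered colimit of those over the $D_i$, so the canonical comparison maps $L^s(d^*F)\to d^*(L^sF)$ and $L^{\AA^1}(d^*F)\to d^*(L^{\AA^1}F)$, which are natural and are objectwise weak equivalences over each $D_i$ by Lemma~\ref{interchangebysmooth}, remain objectwise weak equivalences after passing to the limit $D$, since objectwise weak equivalences are detected on the finitely presented objects $V\in\Sm_D$ and are stable under filtered colimits of simplicial sets. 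The main obstacle is the bookkeeping in \eqref{limitbasechange2}--\eqref{limitbasechange3}: one must be careful that the cofinal reindexing from $\I$ to $\I_V$ does not disturb the colimit, and that passing from representables to general $F$ is legitimate, i.e. that the functors in play genuinely commute with the colimits used to resolve $F$; once this is set up cleanly, the remaining items are formal consequences of finite presentation.
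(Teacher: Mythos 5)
Your treatment of parts \eqref{limitbasechange1}--\eqref{limitbasechange4} is essentially the paper's own argument: \eqref{limitbasechange1} and \eqref{limitbasechange4} are the standard EGA~IV limit arguments (descent of finitely presented, smooth and \'etale data, plus descent of the isomorphism on reduced complements for Nisnevich-distinguished squares), \eqref{limitbasechange2} is the reduction to representables using that $d^*$ and evaluation preserve colimits of presheaves, and \eqref{limitbasechange3} rests on the fact that a filtered colimit of Kan complexes is Kan and computes homotopy classes, which is also how the paper proceeds (via the standard cylinder rather than the mapping space, an immaterial difference).

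The one genuine soft spot is part \eqref{limitbasechange5}. You assert that the fibrant replacement functors commute with the filtered colimit because they are ``built by transfinite iteration \ldots from finitely presented data'', and you then invoke a canonical comparison map $L^s(d^*F)\to d^*(L^sF)$ said to be an objectwise weak equivalence ``over each $D_i$''. Neither claim holds up as stated. The small-object fibrant replacement over $D$ attaches cells indexed by lifting problems against all of $\Sm_D$, not only against data descending from a fixed finite stage, so it does not commute with the colimit over $\I$ in any obvious sense; and a map out of $L^s(d^*F)$ into $d^*(L^sF)$ only exists (by lifting against the acyclic cofibration $d^*F\to L^s(d^*F)$) once one already knows that $d^*(L^sF)$ is Nisnevich-locally fibrant --- which is precisely the content of \eqref{limitbasechange5}. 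The paper avoids this circularity by proving that statement directly: the assertion is equivalent to $d^*$ preserving Nisnevich-local (resp.\ $\AA^1$-Nisnevich-local) fibrant objects; by \eqref{limitbasechange4} every Nisnevich-distinguished square over $D$ (resp.\ every instance of $\AA^1$-invariance) is approximated over the $D_i$, by \eqref{limitbasechange2} the relevant sections are filtered colimits, and filtered colimits of simplicial sets preserve homotopy pullback squares (resp.\ weak equivalences). Your route can be repaired along exactly these lines, but the ``finitely presented data'' heuristic by itself does not close the argument.
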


\begin{proof}
\eqref{limitbasechange1} Follows from \cite[Thm.~8.8.2, Prop.~17.7.8]{EGA4}.
In fact, we may (and always will) even assume that $\I_V = \I \downarrow i_0$ for a suitable object $i_0 \in \I$ and $\underline{V} = V_{i_0} \times_{D_{i_0}} \underline{D}\vert_{\I \downarrow i_0}$ for a suitable smooth morphism $V_{i_0} \to D_{i_0}$.

For \eqref{limitbasechange2}, we may assume $F$ to be simplicially discrete, \ie, a presheaf.
As we may write $F$ as the colimit over representable presheaves and pullback- as well as section-functors preserve colimits in the category of presheaves, we may assume that $F$ is representable by a suitable object $U \to S$ in $\Sm_S$.
Then $d^*F = U\times_S D$ \resp\ $d_i^*F=U\times_S D_i$ and \eqref{limitbasechange2} follows from \eqref{limitbasechange1} and \cite[Thm.~8.8.2]{EGA4}.

For \eqref{limitbasechange3}, let us first observe that $d^*$ preserves objectwise fibrant objects. Indeed, this holds for the $d_i^*$ by Remark \ref{modelstructuresremark}. Taking sections and applying \eqref{limitbasechange2}, it suffices to observe that a filtered colimit of fibrant simplicial sets is again fibrant.
The assertion of \eqref{limitbasechange3} now follows by taking homotopies with respect to the functorial standard cylinder $(-)\times\Delta^1$.

For \eqref{limitbasechange4}, let $f\colon V^\prime \to V$ be an open embedding (\resp\ an \'{e}tale or smooth morphism) in $\Sm_D$.
We apply \eqref{limitbasechange1} first to the structural map $V \to D$ of the target and then to $V^\prime \to V$, itself.
We get approximations $\underline{V},\underline{V}^\prime \colon \I_f \to \Sm_S$ and a natural transformation
$\underline{f}\colon \underline{V}^\prime \to \underline{V}$ inducing $f$ after taking limits.
By \cite[Prop.~8.6.3]{EGA4} (\resp\ \cite[Prop.~17.7.8]{EGA4}) we may assume that $\underline{f}$ is sectionwise an open embedding (\resp\ an \'{e}tale or smooth morphism).
As in \eqref{limitbasechange1}, we may assume $\underline{f}=f_{i_0}\times_{D_{i_0}}\underline{D}|_{\I\downarrow i_0}$.
\\
Let $f$ be \'{e}tale and $j\colon U \hookrightarrow V$ an open immersion inducing a Nisnevich-distinguished square.
Choose an approximation $\underline{f}=f_{i_0}\times_{D_{i_0}}\underline{D}|_{\I\downarrow i_0}$ of $f$ as above.
By possibly enlarging $i_0$, we can find an approximation $\underline{j}=j_{i_0}\times_{D_{i_0}}\underline{D}|_{\I\downarrow i_0}$ of $j$ by open immersions.
We get a levelwise pullback square

\begin{equation*}
 \xymatrix{
  \underline{U}\times_{\underline{V}} \underline{V}^\prime \ar[r] \ar[d] &
  \underline{V}^\prime \ar[d]^-{\underline{f}}\phantom{.}
 \\
  \underline{U} \ar[r]^-{\underline{j}} &
  \underline{V}.
 }
\end{equation*}
In particular, the sectionwise definition of $\underline{f}^*(\underline{Z}) \to \underline{Z} := \underline{V} \setminus \underline{U}$ (sectionwise with the reduced structure) gives a well defined approximation of $f^*(Z) \to Z := V \setminus U$.
By \cite[Cor.~8.8.2.4]{EGA4} we may even assume that this approximation is sectionwise an isomorphism, \ie, the above square of approximations is sectionwise a Nisnevich-distinguished square. 

For \eqref{limitbasechange5}, note that the first assertion is equivalent to $d^*$ preserving Nisnevich-local fibrant objects and that the second assertion is equivalent to $d^*$ preserving $\AA^1$-Nis\-nevich-local fibrant objects.
Let $F\in\sPre_\pt$ be Nisnevich-local fibrant.
We have to show that $d^*F$ sends Nisnevich-distinguished squares to homotopy pullback squares of simplicial sets.
Let $Q$ be a Nisnevich-distinguished square in $\Sm_D$.
By \eqref{limitbasechange4}, $Q$ may be approximated by a diagram $\underline{Q}$ of Nisnevich-distinguished squares.
By \eqref{limitbasechange2}, we have $(d^*F)(Q)\cong \colim(d_i^*F)(Q_i)$.
Again, as the $d_i^*$ admit Quillen left adjoints for the Nisnevich-local model, it suffices to show that a filtered colimit of homotopy pullback squares of simplicial sets is again a homotopy pullback square.
This, in turn, follows from the fact that those colimits preserve categorical pullback squares, fibrations and weak equivalences of simplicial sets.
This shows $L^s(d^* F)\sim d^*(L^s F)$.
\\
For the second assertion it suffices to show that $d^*$ preserves $\AA^1$-invariant simplicial presheaves.
This is the case for the $d_i^*$ as they admit left adjoints $d_{i,\sharp}$.
The assertion follows directly from \eqref{limitbasechange2}.
\end{proof}

We need the following glueing property. Let $S$ be a base scheme of finite Krull dimension and $i\colon Z\hookrightarrow S$ a closed subscheme with complementary open immersion $j\colon U\hookrightarrow S$. For a pointed simplicial presheaf $F\in \sPre_\pt(S)$, there is a homotopy cofibre sequence
\begin{equation}\label{glueing}
j_\sharp j^* F \to F \to i_*L_\Aeins i^* F
\end{equation}
for the \emph{pointed} $\AA^1$-Nisnevich-local model structure.
This fact follows (\eg~by~\cite[Sec.~5.1]{Hoyois17}) from the unpointed analogue due to Morel and Voevodsky~\cite[Thm.~3.2.21]{MV99} (\cf~also\cite[Thm.~4.5.36]{Ayoub06}).

\medskip

For a morphism $f\colon R\to S$ of noetherian schemes of finite Krull dimension, there is also an adjunction
\[
f^*\colon \Spt_{S^1}(S)\adjoint \Spt_{S^1}(R)\colon f_*
\]
on the level of spectra where one defines $f^*(E)_n:=f^*(E_n)$ and $f_*(D)_n:=f_*(D_n)$ with obvious structure maps.
If the morphism $f\colon R\to S$ is smooth and of finite type, there is an adjunction
\[
f_\sharp\colon \Spt_{S^1}(R)\adjoint \Spt_{S^1}(S)\colon f^*
\]
with $f_\sharp(D)_n:=f_\sharp(D_n)$ and structure maps given by the projection formula.
The adjunction $(f^*,f_*)$ is a Quillen adjunction for the stable objectwise, the stable Nisnevich-local and the stable $\AA^1$-Nisnevich-local model structures, respectively.
If $f\colon R\to S$ is smooth and of finite type, then the adjunction $(f_\sharp, f^*)$ is a Quillen adjunction for these stable model structures as well and $f^*$ preserves all stable weak equivalences (\cf\ \cite[Thm.~4.5.23]{Ayoub07}).

We have the following analogue of Lemma~\ref{interchangebysmooth} and Lemma~\ref{limitbasechange} in the stable setting.

\begin{lemma}\label{interchangebysmoothstable}
Let $\underline{D}$ and $V\rightarrow D$ be as in Lemma~\ref{limitbasechange}.
Let $f\colon R\to S$ be either smooth of finite type or the canonical map $\lim \underline{D} \to S$.
Let $E\in \Spt_{S^1}(S)$ be a spectrum.
Then the following statements hold:
\begin{enumerate}
 \item\label{stablelimitbasechange1} $L^\ob(f^* E)\sim f^*(L^\ob E)$, $L^s(f^* E)\sim f^*(L^s E)$ and $L^\Aeins(f^* E)\sim f^*(L^\Aeins E)$.
 \item\label{stablelimitbasechange2} With the notations of Lemma~\ref{limitbasechange}.\eqref{limitbasechange1}, there is a canonical natural isomorphism of pointed (stable objectwise) homotopy classes
 \[
    [\Sigma_{S^1}^\infty(V_\pt) , d^*E]\cong \underset{i\in {\I_V}}{\colim}\hspace{2pt} [\Sigma_{S^1}^\infty(V_{i\pt}), d_i^* E]. 
 \]  
\end{enumerate}
\end{lemma}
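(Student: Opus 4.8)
The plan is to reduce the statement to the presheaf‑level results of Lemma~\ref{interchangebysmooth} and Lemma~\ref{limitbasechange} by means of the explicit fibrant replacement functor. I would model the stable $\tau$-fibrant replacement $L^\tau$ on $\Spt_{S^1}$ by $\Theta_{S^1}^\tau$ from~\eqref{taufunctor}, which is assembled, applied levelwise, out of four ingredients: the unstable replacement $L^\tau$ on $\sPre_\pt$, the functor $\Omega_{S^1}$ (computed levelwise as $\hhom_\pt(S^1,-)$, right adjoint to $S^1\wedge-$), the shift $[1]$, and a sequential colimit. To prove~\eqref{stablelimitbasechange1} for both types of $f$ at once, it then suffices to check that $f^*$ commutes, up to natural objectwise weak equivalence, with each of these four constructions on levelwise $\tau$-fibrant spectra.

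I would argue the four commutations as follows. For the unstable replacement $L^\tau$, the commutation $f^*L^\tau(-)\sim L^\tau(f^*-)$ on $\sPre_\pt$ is exactly Lemma~\ref{interchangebysmooth} when $f$ is smooth of finite type and Lemma~\ref{limitbasechange}.\eqref{limitbasechange5} when $f=d\colon D=\lim\underline{D}\to S$, in the cases $\tau\in\{\simpl,\AA^1\}$; for $\tau=\ob$ it follows from the fact that $f^*$ preserves objectwise weak equivalences and objectwise fibrant presheaves (for smooth $f$ because $f^*$ is restriction along a functor $\Sm_R\to\Sm_S$, for $f=d$ by the argument from the proof of Lemma~\ref{limitbasechange}.\eqref{limitbasechange3}). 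The shift $[1]$ is a levelwise reindexing and so commutes with the levelwise functor $f^*$ strictly. For the sequential colimit I would use that $f^*$ preserves all colimits — it is a left adjoint, to $f_*$ — and that the colimit remains a homotopy colimit after applying $f^*$ by the same filtered‑colimit argument as in the proof of Lemma~\ref{limitbasechange}.\eqref{limitbasechange3}. The remaining and, I expect, most delicate point is the commutation with $\Omega_{S^1}$: for smooth $f$ it is formal from~\eqref{fupperstarandhom} applied levelwise together with $f^*S^1=S^1$, while for $f=d$ I would instead note that $S^1\wedge V_\pt$ is a finitely presentable object of $\sPre_\pt$, that each $d_i^*$ commutes with $\hhom_\pt(S^1,-)$ by~\eqref{fupperstarandhom}, and that finite limits commute with the filtered colimit of Lemma~\ref{limitbasechange}.\eqref{limitbasechange2}, so that combining these yields a natural isomorphism $d^*\Omega_{S^1}(-)\cong\Omega_{S^1}(d^*-)$ on objectwise fibrant spectra. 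Assembling the four commutations gives $L^\tau(f^*E)=\Theta_{S^1}^\tau(f^*E)\sim f^*(\Theta_{S^1}^\tau E)=f^*(L^\tau E)$, proving~\eqref{stablelimitbasechange1}.

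For~\eqref{stablelimitbasechange2} I would exploit that $\Sigma_{S^1}^\infty\dashv(-)_0$ is a Quillen adjunction for the stable objectwise model structure, so that for any $V\in\Sm_D$ and any stable objectwise fibrant spectrum $G$ on $D$ there is a natural identification $[\Sigma_{S^1}^\infty(V_\pt),G]\cong[V_\pt,G_0]$ of pointed objectwise homotopy classes. Applying this with $G=L^\ob(d^*E)$, and then using~\eqref{stablelimitbasechange1} for $d$ together with the levelwiseness of $d^*$, I would rewrite $[\Sigma_{S^1}^\infty(V_\pt),d^*E]$ as $[V_\pt,d^*((L^\ob E)_0)]$. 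Lemma~\ref{limitbasechange}.\eqref{limitbasechange3}, applied to the presheaf $(L^\ob E)_0\in\sPre_\pt(S)$, turns this into $\underset{i\in\I_V}{\colim}\,[V_{i\pt},d_i^*((L^\ob E)_0)]$. Finally, using~\eqref{stablelimitbasechange1} for each smooth $d_i$ and levelwiseness of $d_i^*$ to rewrite $d_i^*((L^\ob E)_0)=(d_i^*(L^\ob E))_0\sim(L^\ob(d_i^*E))_0$, and running the adjunction identification backwards, each term of the colimit becomes $[\Sigma_{S^1}^\infty(V_{i\pt}),d_i^*E]$, which gives the desired colimit description; all maps in sight are manifestly natural, so the isomorphism is canonical.
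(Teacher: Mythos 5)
Your proof is correct and takes essentially the same route as the paper: the paper's own (one-line) proof likewise reduces everything to the explicit fibrant replacement $\Theta_{S^1}^\tau$ of \eqref{taufunctor}, the isomorphism \eqref{fupperstarandhom}, and Lemmas \ref{interchangebysmooth} and \ref{limitbasechange}. You have simply filled in the commutation checks (with $L^\tau$, $\Omega_{S^1}$, the shift, and the sequential colimit) that the paper leaves implicit.
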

\begin{proof}
The assertions follow from \eqref{fupperstarandhom} and the explicit form of a fibrant replacement~\eqref{taufunctor} using Lemma~\ref{interchangebysmooth} and Lemma~\ref{limitbasechange}.
\end{proof}

\begin{corollary}\label{localcorollary}
Let $E\in \Spt_{S^1}(S)$ be a spectrum. 
Then the following statements are equivalent:
\begin{enumerate}
 \item\label{localcorollary1} The homotopy sheaf $\pi_0(E)$ is trivial.
 \item\label{localcorollary2} For all schemes $V\in\Sm_S$ with structure morphism $p\colon V\to S$ and all points $v\in V$ with canonical morphism $\mathfrak{v}\colon V_v^h:=\Spec(\mathcal{O}_{V,v}^h)\to V$, the homotopy sheaf $\pi_0(\mathfrak{v}^*p^*E)$ is trivial.
 \item\label{localcorollary3} For all points $s\in S$ with canonical morphism $\mathfrak{s}\colon S_s^h\to S$, the homotopy sheaf $\pi_0(\mathfrak{s}^*E)$ is trivial.
\end{enumerate}
\end{corollary}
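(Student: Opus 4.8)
The plan is to prove the chain of implications $(1)\Rightarrow(2)\Rightarrow(3)\Rightarrow(1)$, using that the homotopy sheaf $\pi_0(E)$ is the Nisnevich-sheafification of the presheaf $V\mapsto[\Sigma^\infty_{S^1}(V_\pt),L^sE]$ and that Nisnevich sheaves are detected on henselian local schemes.

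For $(1)\Rightarrow(2)$: fix $p\colon V\to S$ in $\Sm_S$ and $v\in V$ with $\mathfrak v\colon V^h_v\to V$. Write $V^h_v$ as the cofiltered limit of the affine Nisnevich neighbourhoods of $v$ in $V$; these are smooth of finite type over $S$ via $p$, so Lemma~\ref{interchangebysmoothstable}.\eqref{stablelimitbasechange2} applied to the composite $\mathfrak v$ (viewed as such a limit over $S$) gives, for every $U\in\Sm_S$,
\[
[\Sigma^\infty_{S^1}(U_\pt),(\mathfrak v^*p^*E)]\;\cong\;\underset{i}{\colim}\,[\Sigma^\infty_{S^1}((U_i)_\pt),p_i^*E],
\]
where we may first replace $U$ by any object of $\Sm_{V^h_v}$ and approximate it by a diagram $\underline U\to\Sm_S$ using Lemma~\ref{limitbasechange}.\eqref{limitbasechange1}. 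Taking $U$ to run over the representables and sheafifying, $\pi_0(\mathfrak v^*p^*E)$ is computed by the filtered colimit of the restrictions of the presheaf $[\Sigma^\infty_{S^1}(-_\pt),L^sE]$ (using Lemma~\ref{interchangebysmoothstable}.\eqref{stablelimitbasechange1} to commute $L^s$ past $p_i^*$) along the Nisnevich neighbourhoods; since $\pi_0(E)=0$ means this presheaf has trivial Nisnevich stalks, the colimit and hence $\pi_0(\mathfrak v^*p^*E)$ vanishes. The implication $(2)\Rightarrow(3)$ is the special case $V=S$, $p=\id$, $v=s$.

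For $(3)\Rightarrow(1)$: a Nisnevich sheaf on $\Sm_S$ is trivial if and only if all its stalks at henselian local rings $\mathcal O^h_{V,v}$ of points of all $V\in\Sm_S$ vanish, so it suffices to show that hypothesis $(3)$, stated only for henselizations of points of $S$, already forces the stalk at every such $\mathcal O^h_{V,v}$ to vanish. Given $V\in\Sm_S$ and $v\in V$, let $s=p(v)\in S$; base change along $\mathfrak s\colon S^h_s\to S$ turns $V\to S$ into a smooth $S^h_s$-scheme, and by Lemma~\ref{interchangebysmoothstable}.\eqref{stablelimitbasechange1} (for the limit map $\mathfrak s$) we have $L^s(\mathfrak s^*E)\sim\mathfrak s^*(L^sE)$, hence the stalk of $\pi_0(E)$ at $\mathcal O^h_{V,v}$ agrees with the stalk of $\pi_0(\mathfrak s^*E)$ at the corresponding henselian local ring of the $S^h_s$-scheme $V\times_S S^h_s$ over $v$. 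By $(3)$, $\pi_0(\mathfrak s^*E)=0$ as a sheaf on $\Sm_{S^h_s}$, so in particular this stalk vanishes. As $v\in V\in\Sm_S$ was arbitrary, $\pi_0(E)=0$.

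The main obstacle is the bookkeeping in $(1)\Rightarrow(2)$: one must correctly set up $\mathfrak v\colon V^h_v\to S$ as a cofiltered limit of objects of $\Sm_S$ with affine transition maps so that Lemma~\ref{limitbasechange} and Lemma~\ref{interchangebysmoothstable} apply, and then check that sheafification commutes with the relevant filtered colimit — i.e.\ that the Nisnevich stalks of $\mathfrak v^*p^*E$ are exactly the filtered colimit of Nisnevich stalks of (restrictions of) $E$. This is where Lemma~\ref{limitbasechange}.\eqref{limitbasechange4}, approximating Nisnevich-distinguished squares, does the real work; everything else is formal.
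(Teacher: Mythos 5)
Your cycle $(1)\Rightarrow(2)\Rightarrow(3)\Rightarrow(1)$ is a reasonable organization, and your $(3)\Rightarrow(1)$ step is in substance the computation the paper carries out for $(2)\Rightarrow(1)$: identifying the Nisnevich stalk of $\pi_0(E)$ at $(V,v)$ with a stalk of a pulled-back sheaf requires Lemma~\ref{interchangebysmoothstable}.\eqref{stablelimitbasechange2} together with a cofinality comparison of Nisnevich neighbourhoods (the appeal to Lemma~\ref{interchangebysmoothstable}.\eqref{stablelimitbasechange1} alone does not give it, but this is easily repaired). The real content of the corollary, however, sits in your $(1)\Rightarrow(2)$, and there your write-up has a genuine gap at the decisive step.

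To prove \eqref{localcorollary2} you must show that $\pi_0(\mathfrak{v}^*p^*E)$ has trivial stalks at \emph{every} point $w$ of \emph{every} $W'\in\Sm_{V^h_v}$. Writing $W'=W\times_V V^h_v$ and approximating Nisnevich neighbourhoods of $(W',w)$ by \'etale maps $W_j\to W$, such a stalk becomes the double colimit $\colim_j\colim_i\,[\Sigma^\infty_{S^1}((W_j\times_V V_i)_\pt),p^*E]$. Your assertion that ``the Nisnevich stalks of $\mathfrak{v}^*p^*E$ are exactly the filtered colimit of Nisnevich stalks of (restrictions of) $E$'' is not correct as stated, and the inference ``$\pi_0(E)=0$ means this presheaf has trivial Nisnevich stalks, hence the colimit vanishes'' does not follow: triviality of $\pi_0(E)$ directly controls only the colimits of the homotopy presheaf over Nisnevich neighbourhoods of points of smooth $S$-schemes, whereas the pro-object $\{W_j\times_V V_i\}_{i,j}$ is in general not of that form --- its limit $(W^h_v)^h_w$ corresponds to a sub-extension of a strict henselization $W^{sh}_{\bar w_0}/W^h_{w_0}$ and need not be $X^h_x$ for any $X\in\Sm_S$, $x\in X$. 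What closes the gap (and is the point the paper makes explicit) is that this pro-object nonetheless defines a point $\alpha$ of the Nisnevich topos of $\Sm_S$, so the double colimit of the homotopy presheaf computes $\alpha(\pi_0(E))$, which vanishes because the zero sheaf has zero stalk at every topos point; alternatively, one can show that $\pi_0(\mathfrak{v}^*p^*E)$ is the sheaf-theoretic inverse image of $\pi_0(E)$ along the induced morphism of Nisnevich sites and use that inverse image preserves the zero sheaf. Some argument of this kind is indispensable; Lemma~\ref{limitbasechange}.\eqref{limitbasechange4} only supplies the approximation of squares, so the step you dismiss as ``formal'' is exactly where the work lies.
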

\begin{proof}
First suppose \eqref{localcorollary2} holds. We want to show \eqref{localcorollary1}, \ie, we have to show that the Nisnevich-stalk at $(V,v)$ of the sheaf $\pi_0(E)$ is trivial for all such $(V,v)$.
By~\eqref{stablelimitbasechange2} of the previous Lemma~\ref{interchangebysmoothstable}, we get
\[
\pi_0(E)_{(V,v)} = \underset{f\colon (W,w)\to(V,v)}{\colim} [\Sigma_{S^1}^\infty(W_\pt), f^*p^*E]\cong [\Sigma_{S^1}^\infty(V_{v,{\pt}}^h), \mathfrak{v}^*p^*E]
\]
where the colimit runs over the Nisnevich-neighbourhoods of $(V,v)$. The identity ${\rm id}_{(V_v^h,v)}$ is cofinal in the Nisnevich-neighbourhoods of $(V_v^h,v)$, so we obtain $[\Sigma_{S^1}^\infty(V_{v,\pt}^h), \mathfrak{v}^*p^*E] = \pi_0(\mathfrak{v}^*p^*E)_{(V_v^h,v)}$ which is trivial by assumption.
\\
For implication \eqref{localcorollary1} $\Rightarrow$ \eqref{localcorollary2}, suppose that $\pi_0(E)=0$.
Let $V_i\to V$ be the diagram given by the affine Nisnevich neighbourhoods of $(V,v)$.
In particular, we have $\lim_i V_i\cong V_v^h$.
By Lemma~\ref{limitbasechange}.\eqref{limitbasechange4}, every object in $\Sm_{V_v^h}$ has the form $W^h_{v}:=W\times_{V}V_v^h$ for a suitable $W\in\Sm_V$.
Take a point $w\in W^h_{v}$ and let $w_0$ be its image in $W$.
Again by Lemma~\ref{limitbasechange}.\eqref{limitbasechange4}, we find a diagram of \'etale maps $W_j\to W$ such that $(W^h_v)^h_w\cong \lim_j (W_j)_v^h\cong \lim_j\lim_i W_j\times_V V_i$.
Using Lemma~\ref{interchangebysmoothstable}.\eqref{stablelimitbasechange2}, we compute
\[
\pi_0(\mathfrak{v}^*p^*E)_{(W^h_v,w)} \cong \colim_j [\Sigma_{S^1}^\infty((W_j)_{v,\pt}^h),\mathfrak{v}^*p^*E]
				      \cong \colim_j \colim_i [\Sigma_{S^1}^\infty((W_j\times_V V_i)_\pt) , p^*E].
\]
The pro-object $\{W_j\times_V V_i\}_{i,j}$ in $\Sm_S$ induces a Nisnevich point $\alpha$ of $\mathrm{Sh}(Sm_S)$.
Note that this point may not correspond to the henselian scheme $X^h_{x}$ for some $X\in\Sm_S$ and $x\in X$ but to a sub-extension of the strict henselization $W_{\bar{w}_0}^{sh}/W_{w_0}^h$.
The assumption $\pi_0(E)=0$ now implies
\[
 \colim_j \colim_i [\Sigma_{S^1}^\infty((W_j\times_V V_i)_\pt) , p^*E]\cong \alpha(\pi_0(E))=0.
\]
As a special case we get implication \eqref{localcorollary3} $\Rightarrow$ \eqref{localcorollary2}.
Finally, the reverse implication \eqref{localcorollary2} $\Rightarrow$ \eqref{localcorollary3} is trivial.
\end{proof}

\subsection*{$\PP^1$-spectra}
In this subsection, we will briefly recall a model for the $\PP^1$-stable \emph{motivic homotopy category}.
As an underlying category of this model structure, we use $(\GG_m,S^1)$-bispectra, \ie, the category $\Spt_{\GG_m}(\Spt_{S^1}(S))$ of $\GG_m$-spectra with entries in $\Spt_{S^1}$ (\cf~\cite[Def.~1.1]{Hovey01}).
Here, by abuse of notation, $\GG_m$ denotes the \mbox{$S^1$-suspension} spectrum of a cofibrant replacement of the pointed object $(\GG_m,1)$ of $\sPre_\pt(S)$ (\cf\ \cite[Rem.~5.1.10]{Morel04}).
Again, by an abuse of notation, we abbreviate this category by $\Spt_{\PP^1}(S)$ and call its objects \emph{$\PP^1$-spectra}.
Similarly to the passage from $\sPre_\pt$ to $S^1$-spectra, the zeroth entry of a $\PP^1$-spectrum and the $\GG_m$-suspension spectrum functor fit into an adjunction
\begin{equation}\label{gmsuspensionadjunction}
\Sigma^\infty_{\GG_m}:\Spt_{S^1}\adjoint\Spt_{\PP^1} :(-)_0.
\end{equation}
For $q\geq 0$, there is also an adjunction $\langle -q\rangle :\Spt_{\PP^1}\adjoint \Spt_{\PP^1}:\langle q\rangle$ of shift functors defined for an integer~$q$ by $E\langle q\rangle_m:=E_{q+m}$ whenever $q+m\geq0$ and $E\langle q\rangle_m:=*$ otherwise.
Again by the general procedure of \cite{Hovey01}, we equip $\Spt_{\PP^1}$ with the stable model structure induced via \eqref{gmsuspensionadjunction} by the stable $\AA^1$-Nisnevich-local structure on $\Spt_{S^1}$ (\cf~\cite[Def.~3.3]{Hovey01}).
Its homotopy category $\SH(S)$ is the (\mbox{$\PP^1$-stable}) \emph{motivic homotopy category}.
The two above-mentioned adjunctions turn into Quillen adjunctions for these structures, respectively.

The motivic homotopy category $\SH$ is a triangulated category with distinguished triangles again given by the homotopy cofibre sequences.
Note that here the triangulated shift is again induced by the simplicial shift $[1]$ and not by the $\GG_m$-shift $\langle 1\rangle$.

Finally, let us mention a concrete fibrant replacement functor for the above model structure on $\Spt_{\PP^1}$. This is completely analogous to the $S^1$-stabilization process from $\sPre_\pt$ to $\Spt_{S^1}$. Let $E\in\Spt_{\PP^1}$. By \cite[Thm.~4.12]{Hovey01}, we may use the functor
\[
\Theta_{\GG_m} E=\colim(E\to \Omega_{\GG_m} E \langle 1\rangle \to (\Omega_{\GG_m})^2 E \langle 2\rangle \to\ldots)
\]
if each level of $E$ is already a fibrant spectrum in $\Spt_{S^1}$. Otherwise we can first apply the stable $\AA^1$-Nisnevich-local fibrant replacement functor $\Theta_{S^1}^\Aeins$ levelwise. We write $\Omega^\infty_{\GG_m}\colon \Spt_{\PP^1}\to \Spt_{S^1}$ for the composition of this fibrant replacement functor with $(-)_0$ from \eqref{gmsuspensionadjunction}.

\subsection*{Preliminaries on t-structures}
We briefly recall the definition of a homological t-structure and basic properties.
Details can be found in \cite{GM03}.

\begin{defi}
A \emph{(homological) t-structure} on a triangulated category $\D$ is a pair of full subcategories $\D_{\leq 0}$ and $\D_{\geq 0}$ which are closed under isomorphisms in $\D$ such that the following axioms hold, where for an integer $n$, one sets $\D_{\geq n}:=\D_{\geq 0}[n]$ and $\D_{\leq n}:=\D_{\leq 0}[n]$:
\begin{enumerate}
 \item\label{item:tstructure1} For all $X\in \D_{\geq 0}$ and all $Y\in \D_{\leq -1}$ we have $\hom_{\D}(X,Y)=0$. 
 \item\label{item:tstructure2} $\D_{\geq 0}$ is closed under $[1]$ (\ie, $\D_{\geq 1}\subseteq \D_{\geq 0}$) and dually $\D_{\leq -1}\subseteq\D_{\leq 0}$.
 \item\label{item:tstructure3} For all $Y\in \D$ there exists a distinguished $X\to Y\to Z\to X[1]$ with $X\in \D_{\geq 0}$ and $Z\in\D_{\leq -1}$.
\end{enumerate}
Set $\D_{=n}:=\D_{\geq n}\cap \D_{\leq n}$ and call $\D_{=0}$ the \emph{heart} of the t-structure.
A t-structure is called \emph{non-degenerate} if $\cap_{n\geq 0}\D_{\geq n}=\{0\}$ and $\cap_{n\leq 0}\D_{\leq n}=\{0\}$.
A t-structure is called \emph{left complete} if for all $X\in \D$ the canonical morphism 
\[
X\to \underset{n\to\infty}{\holim} X_{\leq n}
\]
is an isomorphism.
Dually, a t-structure is called \emph{right complete} if for all $X\in \D$ the canonical morphism $\underset{n\to-\infty}{\hocolim} X_{\geq n}\to X$ is an isomorphism.
\end{defi}

\begin{remark}
The adjunctions
\[
 \begin{array}{rlcll}
  \textit{inclusion}:            &\D_{\geq n}&\rightleftarrows&\D          &:\tau_{\geq n}\\
 \tau_{\leq n}:		     &\D         &\rightleftarrows&\D_{\leq n} &:\textit{inclusion}
 \end{array}
\]
turn $\D_{\geq n}$ into a coreflective and $\D_{\leq n}$ into a reflective subcategory of $\D$.
The counit of the first adjunction is denoted by $(-)_{\geq n}\colon\D\to \D$ and called the \emph{$n$-skeleton}.
The unit of the second adjunction is denoted by $(-)_{\leq n}$ and called the \emph{$n$-coskeleton}.
The skeleton and the coskeleton induce a distinguished triangle
\[
 X_{\geq n}\to X\to X_{\leq n-1}\to (X_{\geq n})[1].
\]
\end{remark}

\begin{remark}\label{completenessanddegeneration}
Let $\D$ be a triangulated category obtained from the homotopy category of a stable model category together with a t-structure.
If the t-structure is left complete, then $\cap_{n\geq 0}\D_{\geq n}=\{0\}$ which can be seen as follows:
Take $X\in \cap_{n\geq 0}\D_{\geq n}$ and suppose that $X\to \holim X_{\leq n}$ is an isomorphism. The homotopy limit of the diagram
\[
\xymatrix{
\ar@{-->}[d]		&\ar@{-->}[d]		&\ar@{-->}[d]	\\
X_{\geq n+1}\ar@{->}[d]\ar@{->}[r]^(0.55)\cong		&X\ar@{=}[d]\ar@{->}[r]		& X_{\leq n}\ar@{->}[d]	\\
X_{\geq n}\ar@{->}[r]^(0.55)\cong		&X\ar@{->}[r]			& X_{\leq n-1}
}
\]
of triangles is the triangle $\holim X_{\geq n}\to X\to \holim X_{\leq n}$.
Since the homotopy limit of weak equivalences is a weak equivalence, the first morphism $\holim X_{\geq n}\to X$ of this triangle is an isomorphism.
This implies $\holim X_{\leq n}\cong 0$ and hence $X\cong 0$.
In the same way, right completeness implies $\cap_{n\leq 0}\D_{\leq n}=\{0\}$.

For the converse, consider \cite[Prop.~1.2.1.19]{Lurie14}:
Suppose that $\D_{\geq 0}$ is stable under countable homotopy products, then $\cap_{n\geq 0}\D_{\geq n}=\{0\}$ implies left complete\-ness.
Dually, if $\D_{\leq 0}$ is stable under countable homotopy coproducts, the relation $\cap_{n\leq 0}\D_{\leq n}=\{0\}$ implies right completeness.
\end{remark}

\begin{proposition}[Ayoub {\cite[Prop.~2.1.70]{Ayoub07}}]\label{tstructureconstruction}
Let $\D$ be a triangulated category with coproducts and let $\S$ be a set of compact objects of $\D$.
Define 
\begin{itemize}
 \item $\D_{\leq -1}$ as the full subcategory of those $Y$ of $\D$ with $\hom_\D(S[n],Y)=0$ for all $n\geq 0$ and all $S\in\S$,
 \item $\D_{\geq 0}$ as the full subcategory of those $X$ of $\D$ with $\hom_\D(X,Y)=0$ for all $Y\in\D_{\leq -1}$.
\end{itemize}
The pair $\D_{\leq 0}=\D_{\leq -1}[1]$ and $\D_{\geq 0}$ forms a t-structure.
The category $\D_{\geq 0}$ is the full subcategory of $\D$ generated under extensions, (small) sums and cones from $\S$ and in particular $\S\subseteq \D_{\geq 0}$. Moreover, the truncation functor $\tau_{\leq -1}$ is given by $\tau_{\leq -1}(X):=\underset{k\to\infty}{\hocolim}~\Phi^k(X)$ where $\Phi(X)$ is defined as the cone
\[
\coprod_{\substack{\operatorname{Hom}(S[n],X) \\ S\in\mathcal{S},n\geq 0 }} S[n] \to X\to \Phi(X).
\]
\end{proposition}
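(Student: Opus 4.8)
The plan is to follow the strategy of constructing a t-structure by generation, treating $\tau_{\leq -1}$ via a transfinite small-object-type construction and then verifying the three t-structure axioms directly. First I would observe that $\D_{\geq 0}$ is, by its very definition as a right-orthogonal-complement's left-orthogonal, closed under extensions, cones and small sums, since $\hom_\D(-,Y)$ sends homotopy cofibre sequences to long exact sequences and sends coproducts to products; and $\S \subseteq \D_{\geq 0}$ because any $S \in \S$ satisfies $\hom_\D(S[n],Y)=0$ for $Y \in \D_{\leq -1}$ and all $n \geq 0$, in particular for $n=0$. Dually $\D_{\leq -1}$ is closed under $[-1]$ essentially by construction (shifting the index $n$), which gives axiom~\eqref{item:tstructure2}, and axiom~\eqref{item:tstructure1} is immediate: for $X \in \D_{\geq 0}$ and $Y \in \D_{\leq -1}=\D_{\leq 0}[-1]$, i.e.\ $Y[1] \in \D_{\leq 0}$, we have $Y \in \D_{\leq -1}$ so $\hom_\D(X,Y)=0$ directly from the definition of $\D_{\geq 0}$.

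The substantial point is axiom~\eqref{item:tstructure3}: producing, for each $Y \in \D$, a triangle $X \to Y \to Z \to X[1]$ with $X \in \D_{\geq 0}$ and $Z \in \D_{\leq -1}$. Here I would use the functor $\Phi$ defined in the statement: the cone on $\bigoplus_{\operatorname{Hom}(S[n],X),\, S\in\S, n\geq 0} S[n] \to X$ comes with a natural map $X \to \Phi(X)$, and I set $\tau_{\leq -1}(Y) := \hocolim_{k\to\infty} \Phi^k(Y)$, a sequential (filtered) homotopy colimit. Compactness of the objects of $\S$ is exactly what makes this work: for any $S \in \S$ and any $n$, $\hom_\D(S[n], \hocolim_k \Phi^k(Y)) \cong \colim_k \hom_\D(S[n], \Phi^k(Y))$, and the defining triangle of $\Phi$ shows every class in $\hom_\D(S[n],\Phi^k(Y))$ is killed in $\hom_\D(S[n],\Phi^{k+1}(Y))$; hence the colimit vanishes and $\tau_{\leq -1}(Y) \in \D_{\leq -1}$. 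Let $X := \hocofib(\tau_{\leq -1}(Y) \to Y)[-1]$, fitting into a triangle $X \to Y \to \tau_{\leq -1}(Y) \to X[1]$. It remains to check $X \in \D_{\geq 0}$, i.e.\ $\hom_\D(X,W)=0$ for all $W \in \D_{\leq -1}$: applying $\hom_\D(-,W)$ to this triangle and to each triangle $\bigoplus S[n] \to \Phi^{k-1}(Y) \to \Phi^k(Y)$, and using that $\D_{\leq -1} = {}^{\perp}$ of the shifts of $\S$ is closed under the relevant (co)limits so that $\hom_\D(-,W)$ converts the homotopy colimit into a limit, one gets that $\hom_\D(Y,W) \to \hom_\D(\tau_{\leq -1}(Y),W)$ is an isomorphism, whence $\hom_\D(X,W)=0$ and $\hom_\D(X[1],W)=0$.

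Once the axioms hold, the description of $\D_{\geq 0}$ follows formally: let $\D'$ be the smallest full subcategory containing $\S$ and closed under extensions, small sums and cones. Then $\D' \subseteq \D_{\geq 0}$ by the closure properties of $\D_{\geq 0}$ noted above. Conversely, for $X \in \D_{\geq 0}$, the triangle from axiom~\eqref{item:tstructure3} applied to $X$ gives $X' \to X \to \tau_{\leq -1}(X)$; but $\tau_{\leq -1}(X) \in \D_{\geq 0} \cap \D_{\leq -1} = 0$ by axiom~\eqref{item:tstructure1}, so $X \cong X'$, and by construction $X'$ is built from $X$ via $\Phi$, i.e.\ as an iterated cone on sums of shifted objects of $\S$ — hence $X \in \D'$. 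The identification of $\tau_{\leq -1}$ with $\hocolim_k \Phi^k$ is what we just proved along the way. The main obstacle is the argument in the previous paragraph that $X \in \D_{\geq 0}$: it requires knowing that $\D_{\leq -1}$ is closed under enough homotopy (co)limits that $\hom_\D(-,W)$ turns the defining $\hocolim$ into an honest $\lim$ with vanishing $\lim^1$; this is where compactness of $\S$ (controlling the $\colim$ on the source side) and the stability of the triangulated structure (Milnor sequences for $\hocolim$) are both used essentially. With $\D$ arising as the homotopy category of a stable model category, as here, these homotopy (co)limits exist and behave as required.
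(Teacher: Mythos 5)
The paper itself gives no proof of this proposition---it is quoted from Ayoub \cite[Prop.~2.1.70]{Ayoub07}---so your proposal can only be measured against the standard argument, which is indeed the route you take: axioms \eqref{item:tstructure1} and \eqref{item:tstructure2} from the orthogonality definitions, closure properties of $\D_{\geq 0}$, $\S\subseteq\D_{\geq 0}$, and axiom \eqref{item:tstructure3} via the functor $\Phi$, compactness of $\S$, and the Milnor $\lim^1$-sequence. All of that part is correct, including $\tau_{\leq -1}(Y)=\hocolim_k\Phi^k(Y)\in\D_{\leq -1}$.

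The gap is in the decisive step that the fibre $X$ of $Y\to\tau_{\leq -1}(Y)$ lies in $\D_{\geq 0}$. From the isomorphism $\hom_\D(\tau_{\leq -1}(Y),W)\xrightarrow{\;\sim\;}\hom_\D(Y,W)$ for all $W\in\D_{\leq -1}$ (equivalently for $W$ and $W[-1]$) the long exact sequence only yields $\hom_\D(X[1],W)=0$; to get $\hom_\D(X,W)=0$ you must also control the connecting term $\hom_\D(\tau_{\leq -1}(Y)[-1],W)=\hom_\D(\tau_{\leq -1}(Y),W[1])$, and $W[1]$ need \emph{not} lie in $\D_{\leq -1}$, so your isomorphism does not apply there. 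What is needed in addition is injectivity of $\hom_\D(\tau_{\leq -1}(Y),W[1])\to\hom_\D(Y,W[1])$; this is true and provable by the same tower analysis (the transition maps $\hom_\D(\Phi^{k+1}(Y),W[1])\to\hom_\D(\Phi^{k}(Y),W[1])$ are injective because the error term is $\hom_\D(C_k[1],W[1])=\hom_\D(C_k,W)=0$, where $C_k$ denotes the coproduct of the $S[n]$, and the $\lim^1$-term vanishes since the tower $\hom_\D(\Phi^k(Y),W)$ consists of isomorphisms), but as written your deduction ``isomorphism, whence $\hom_\D(X,W)=0$'' does not follow. A cleaner repair, and the one that also fills in the point you left at ``by construction'' in the generation statement, is to show directly that $X$ is built from $\S$ by sums, extensions and cones: with $X_k:=\hocofib(Y\to\Phi^k(Y))[-1]$ the octahedral axiom gives triangles $X_k\to X_{k+1}\to C_k\to X_k[1]$, so each $X_k$ is an iterated extension of coproducts of nonnegative shifts of objects of $\S$, and $X\simeq\hocolim_k X_k$ is a cone on a map between coproducts of the $X_k$ (unproblematic here since $\D$ comes from a stable model category); since you already proved $\D_{\geq 0}$ is closed under these operations, this gives $X\in\D_{\geq 0}$ and, applied to $X\in\D_{\geq 0}$ with $\tau_{\leq -1}(X)=0$, the claimed description of $\D_{\geq 0}$ as generated from $\S$.
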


\begin{remark}\label{remarkrightcompleteness}
Let $\D$ be a triangulated category obtained from the homotopy category of a stable model category and let $\S$ be a set of compact objects of $\D$.
The t-structure obtained from the previous Proposition \ref{tstructureconstruction} satisfies the property that $\D_{\leq 0}$ is stable under countable homotopy coproducts.
If $\D$ has an underlying cofibrantly generated model category and $\S$ equals (up to shifts) the set of cofibres of the generating cofibrations, then $\cap_{n\leq 0}\D_{\leq n}=\{0\}$ by \cite[Thm.~7.3.1]{Hovey99} and $\D$ is right complete by the previous Remark \ref{completenessanddegeneration}.
It is however usually a non-trivial issue to show left completeness of a t-structure obtained from Proposition~\ref{tstructureconstruction}.
\end{remark}

\subsection*{t-structures on $S^1$-spectra}
In this subsection we recall some basic properties about canonical t-struc\-tures on $S^1$-spectra arising in $\AA^1$-homotopy theory.

\begin{defi}
Consider the set $\S:=\{\Sigma^\infty_{S^1} U_\pt\mid U\in \Sm_S\}$.
The \emph{objectwise t-structure} (\resp\ \emph{Nisnevich-local t-structure}, \resp\ \emph{$\AA^1$-Nisnevich-local t-structure}) on $\SH_{S^1}^\ob$ (\resp\ $\SH_{S^1}^\simpl$, \resp\ $\SH_{S^1}^\Aeins$) is obtained by applying Proposition~\ref{tstructureconstruction} to the triangulated category $\SH_{S^1}^\ob$ (\resp\ $\SH_{S^1}^\simpl$, \resp\ $\SH_{S^1}^\Aeins$) and to $\S$.
\end{defi}

\begin{remark}
In \cite{Morel05} the Nisnevich-local t-structure on $\SH_{S^1}^\simpl$ is called the \emph{standard t-structure}.
In \cite[Ch.~4.3]{Morel04} the $\AA^1$-Nisnevich-local t-structure on $\SH_{S^1}^\Aeins$ is called the \emph{homotopy t-structure} (on $S^1$-spectra).
\end{remark}

\begin{remark}
By definition we have
\[
\begin{array}{lcl}
\SH^\ob_{S^1\leq -1}\hspace{-5pt} &=& \{Y\in \SH_{S^1}^\ob \hspace{1pt}\mid \text{$Y$ has trivial homotopy presheaves $[\Sigma_{S^1}^\infty(-_\pt)[i], Y]$ for all $i \geq 0$}\}.
\end{array}
\]
Applying the classical \cite[Prop.~3.6]{Margolis83} objectwise, we get
\[
\begin{array}{lcl}
\hspace{9pt}\SH^\ob_{S^1\geq 0}\hspace{2pt} &=& \{X\in \SH_{S^1}^\ob \mid \text{$X$ has trivial homotopy presheaves $[\Sigma_{S^1}^\infty(-_\pt)[i], X]$ for all $i \leq -1$}\}.
\end{array}
\]
The objectwise t-structure is clearly non-degenerate as there are no non-zero spectra without non-trivial homotopy presheaves. The objectwise t-structure is right complete by Remark \ref{remarkrightcompleteness} and left complete by Remark \ref{completenessanddegeneration} as $\SH^\ob_{S^1\geq 0}$ is stable under countable homotopy products.
\end{remark} 

\begin{remark}\label{localizationboundedaboveproposition}
Again, by definition we have
\[
\begin{array}{lcl}
\SH^\simpl_{S^1\leq -1} \hspace{-5pt} &=& \{Y\in \SH_{S^1}^\simpl \hspace{1pt}\mid \text{$Y$ has trivial homotopy presheaves $[\Sigma_{S^1}^\infty(-_\pt)[i], L^\simpl\hspace{2pt} Y]$ for all $i \geq 0$}\}
\end{array}
\]
and using on Nisnevich-stalks the classical result \cite[Prop.~3.6]{Margolis83}, we get
\[
\begin{array}{lcl}
\SH^\simpl_{S^1\geq 0}  \hspace{-5pt}&=& \{X\in \SH_{S^1}^\simpl \hspace{0pt}\mid \text{$X$ has trivial homotopy sheaves $\pi_i^\simpl X$ for all $i \leq -1$}\},\\
\SH^\simpl_{S^1\leq -1} \hspace{-5pt}&=& \{Y\hspace{1.1pt}\in \SH_{S^1}^\simpl \mid \text{$Y$ has trivial homotopy sheaves $\pi_i^\simpl\hspace{1.7pt} Y$ for all $i \geq 0$}\}.
\end{array}
\]
The Nisnevich-local t-structure is clearly non-degenerate as there are no non-zero spectra without non-trivial homotopy sheaves.
The Nisnevich-local t-structure is right complete by Remark \ref{remarkrightcompleteness} and left complete by, \eg, \cite[Lem.~4.4]{Spitzweck14}. 
\end{remark}

\begin{remark}\label{actsbelowremark}
A Nisnevich-local fibrant replacement functor $L^\simpl\hspace{2pt}$ respects only the truncation from above, \ie, if $E$ is in $\SH^\ob_{S^1\leq -1}$, then the spectrum $L^\simpl \hspace{1pt}E$ is in $\SH^\simpl_{S^1\leq -1}$.
The analogous statement is not true for the positive part $\SH^\ob_{S^1\geq 0}$ which can be seen as follows:
By Hilbert's Theorem~90, there is an isomorphism $\Pic(X)\cong H^1_{{\rm Nis}}(X,\GG_{\rm m})$. The Eilenberg--Mac~Lane spectrum $H\GG_{\rm m}$ is in the heart of the objectwise t-structure but
\[
H^1_{{\rm Nis}}(X,\GG_{\rm m}) = [\Sigma^\infty_{S^1} X_\pt,L^s H\GG_{\rm m}[1]] = [\Sigma^\infty_{S^1} X_\pt\hspace{1pt}[-1],L^s H\GG_{\rm m}]
\]
and certainly there are schemes $X$ with non-trivial Picard group.
\end{remark} 

\begin{remark}
By definition and Remark \ref{localizationboundedaboveproposition}, one has
\[
\begin{array}{lcl}
\SH^\Aeins_{S^1\leq -1} \hspace{-5pt}&=& \{Y\in \SH_{S^1}^\Aeins \mid \text{$Y$ has trivial homotopy presheaves $[\Sigma_{S^1}^\infty(-_\pt)[i], L^\Aeins\hspace{2pt} Y]$ for all $i \geq 0$}\}\phantom{.}\\
			\hspace{-5pt}&=& \{Y\in \SH_{S^1}^\Aeins \mid \text{$Y$ has trivial homotopy sheaves $\pi_i^\Aeins\hspace{1.7pt} Y$ for all $i \geq 0$}\}.
\end{array}
\]
The $\AA^1$-Nisnevich-local t-structure is right complete by Remark \ref{remarkrightcompleteness} and we have $\cap_{n\leq 0}\SH^\Aeins_{S^1\leq n}=\{0\}$.
It will be shown in Corollary \ref{tstructurenondegenerate} that the $\AA^1$-Nisnevich-local t-structure is left complete and hence non-degenerate.
\end{remark}

\begin{defi}
We define
\[
\SH^{\Aeins,\pi}_{S^1\geq 0} := \hspace{-5pt}= \{X\in \SH_{S^1}^\Aeins \mid \text{$X$ has trivial homotopy sheaves $\pi_i^\Aeins X$ for all $i \leq -1$}\}.
\]
\end{defi}

\begin{remark}\label{tstructureremark}
The full subcategory $\SH^{\Aeins,\pi}_{S^1 \geq 0}\subseteq \SH_{S^1}^\Aeins$ is closed under homotopy co\-limits and extensions.
There is an inclusion $\SH^{\Aeins,\pi}_{S^1 \geq 0}\subseteq \SH^\Aeins_{S^1\geq 0}$ \cite[Lem.~4.1,~4.3]{Spitzweck14}.
The other implication $\SH^\Aeins_{S^1\geq 0}\subseteq \SH^{\Aeins,\pi}_{S^1 \geq 0}$ holds if and only if $L^\Aeins \Sigma^\infty_{S^1} U_\pt\in \SH^{\Aeins,\pi}_{S^1 \geq 0}$ for all $U\in \Sm_S$.
Unfortunately, there are schemes $S$, such that these two equivalent conditions do not hold (see Remark \ref{counterexample}).
However, they hold true over the spectrum of a field $S$ \cite[Thm.~6.1.8]{Morel05} and we have $\SH^\Aeins_{S^1\geq 0}= \SH^{\Aeins,\pi}_{S^1\geq 0}$ in that case.
\end{remark}

\subsection*{t-structures on $\PP^1$-spectra}

In this subsection we recall the homotopy t-struc\-ture on the motivic homotopy category $\SH$.
We remind the reader that $\langle q\rangle$ denotes the $\GG_m$-shift operation.

\begin{defi}
The \emph{homotopy t-structure} on $\SH$ is obtained by applying Proposition~\ref{tstructureconstruction} to the triangulated category $\SH$ and the set 
\[
\S=\{\Sigma_{\PP^1}^\infty (U_\pt) \langle q \rangle \mid U\in \Sm_S\mbox{ and }q\in\ZZ\}.
\]

\end{defi}

\begin{remark}
We use the name ``homotopy t-structure'' in order to agree with the terminology of \cite{Morel04} and \cite{Morel05}.
\end{remark}

\begin{remark}
Unravelling the definitions, one identifies
\[
\begin{array}{rcl}
\SH_{\leq -1} \hspace{-5pt}&=& \{Y\in \SH \mid \Omega_{\GG_m}^\infty (Y\langle q\rangle) \in \SH_{S^1\leq-1}^\Aeins \text{ for all }q\in\ZZ\}\\
              \hspace{-5pt}&=& \{Y\in \SH \mid (\colim_k \Omega^k_{\GG_m}Y_{k+q})\in \SH_{S^1\leq-1}^\Aeins \text{ for all }q\in\ZZ\}.
\end{array}
\]
In particular
\[
\begin{array}{lcl}
\Omega_{\GG_m}^\infty (\SH_{\leq -1}) & \subseteq & \SH^\Aeins_{S^1 \leq -1} \text{ and}\\
\Sigma_{\GG_m}^\infty (\SH^\Aeins_{S^1 \geq 0}) &\subseteq & \SH_{\geq 0},
\end{array}
\]
using \cite[Lem.~2.1.16]{Ayoub06} for the latter.
\end{remark}

\begin{remark}
Over a field, using \cite[Lem.~4.3.11]{Morel04} and the equality $\SH^\Aeins_{S^1\geq 0}= \SH^{\Aeins,\pi}_{S^1\geq 0}$ from Remark \ref{tstructureremark}, we can also identify
\[
\SH_{\geq 0}  = \{X\in \SH \mid \Omega_{\GG_m}^\infty(X\langle q\rangle)\in \SH^\Aeins_{S^1 \geq 0} \text{ for all }q\in\ZZ\}
\]
(\cf\ \cite[Ch.~5.2]{Morel04}). In particular, we have $\Omega_{\GG_m}^\infty (\SH_{\geq 0})  \subseteq  \SH^\Aeins_{S^1 \geq 0}$ in this case.
\end{remark}

\begin{remark}
\label{rightcompletenessofhomotopytstructure}
The homotopy t-structure on the motivic homotopy category is right complete by Remark~\ref{remarkrightcompleteness} and we have $\cap_{n\leq 0}\SH_{\leq n}=\{0\}$.
It will be shown in Corollary~\ref{homotopytstructurenondegenerate} that the homotopy t-structure on the motivic homotopy category is also left complete and hence non-degenerate.
\end{remark}

\section{Gabber-presentations over henselian discrete valuation rings}\label{chaptergabber}

\noindent
Throughout the whole section, fix a henselian discrete valuation ring $\mathfrak{o}$ with maximal ideal $\mathfrak{m}\trianglelefteq\mathfrak{o}$, local uniformizer $\pi\in\mathfrak{m}$, residue field $\mathbb{F} = \mathfrak{o}/\mathfrak{m}$ and field of fractions $k$.
Assume that $\mathbb{F}$ is an infinite field.
Let $S$ be the spectrum of $\mathfrak{o}$ and denote by $\sigma$ \resp\ $\eta$ the closed \resp\ generic point of $S$.
We want to prove the following version of Gabber's geometric presentation lemma over $\mathfrak{o}$:

\begin{theorem}\label{thm: gabber distinguished square}
Let $\mathfrak{o}$ be a henselian discrete valuation ring with infinite residue field.
Let ${X}/\mathfrak{o}$ be a smooth $\mathfrak{o}$-scheme of finite type and let ${Z}\hookrightarrow {X}$ be a proper closed subscheme.
Let $z$ be a point in ${Z}$.
If $z$ lies in the special fibre, suppose that ${Z}_\sigma \neq {X}_\sigma$.
Then, Nisnevich-locally around $z$, there exists a smooth $\mathfrak{o}$-scheme $V$ of finite type and a cartesian square 
\[
  \xymatrix{
   {X}\setminus{Z} \ar[r] \ar[d] &
   {X} \ar[d]^-p
  \\
   \mathbb{A}_V^1\setminus p({Z}) \ar[r] &
   \mathbb{A}_V^1
  }
\]
such that $p$ is \'{e}tale, the restriction $p|_Z\colon Z\hookrightarrow \mathbb{A}_V^1$ is a closed subscheme and $Z$ is finite over $V$.
In particular, this square is a Nisnevich-distinguished square and therefore, the induced canonical morphism ${X} / ({X}\setminus{Z}) \rightarrow \mathbb{A}_V^1/(\mathbb{A}_V^1\setminus p({Z}))$ is an isomorphism of Nisnevich sheaves.
\end{theorem}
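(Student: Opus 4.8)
The plan is to reduce Theorem~\ref{thm: gabber distinguished square} to the classical Gabber presentation lemma \cite[Thm.~3.1.1]{CTHK97} over a suitable base. The statement \cite[Thm.~3.1.1]{CTHK97} is proved for smooth affine schemes over a \emph{field}, and produces, Nisnevich-locally around a point of a closed subset, an \'etale map to an affine space over a field together with the required finiteness and closed-immersion properties. Over $\mathfrak{o}$ we cannot apply this directly, but we can work fibrewise and then glue: the two cases to treat are when $z$ lies in the generic fibre $X_\eta$ (a smooth scheme over the field $k$) and when $z$ lies in the special fibre $X_\sigma$ (a smooth scheme over the infinite field $\mathbb{F}$). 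First I would Nisnevich-localize $X$ around $z$ so that $X=\Spec(A)$ is affine, and shrink $Z$ so that $Z$ is still a proper closed subscheme (in the special-fibre case, using the hypothesis $Z_\sigma\neq X_\sigma$ to ensure that $Z_\sigma$ is a \emph{proper} closed subset of the smooth $\mathbb{F}$-scheme $X_\sigma$, so the field version applies to the fibre).

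The second ingredient is a relative Noether normalization over the Dedekind base, in the form of \cite[Thm.~4.6]{Kai15}: after shrinking, one can produce a finite morphism $X\to \AA^d_S$ that is generically \'etale, and, more importantly, choose the coordinates so that the induced map on the relevant fibre realizes the Gabber presentation found there. The key point of the argument is a \emph{lifting/spreading-out} step: a choice of linear projection that works on the fibre through $z$ (producing an \'etale map $X_{\text{fibre}}\to\AA^{d-1}_{\kappa}$ realizing $Z_{\text{fibre}}$ as a finite closed subscheme) is given by finitely many coordinate conditions, and since $\mathfrak{o}$ is henselian these conditions can be solved over $\mathfrak{o}$ itself; the \'etale locus, the finiteness locus of $Z\to V$, and the locus where $Z\hookrightarrow\AA^1_V$ is a closed immersion are all open and contain the chosen fibre point, so after a further Nisnevich shrinking around $z$ we may assume they hold over all of the new $X$. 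Set $V$ to be the target $\AA^{d-1}$-factor (a smooth affine $\mathfrak{o}$-scheme), and $p\colon X\to\AA^1_V$ the composite; the construction makes $p$ \'etale, $p|_Z\colon Z\hookrightarrow\AA^1_V$ a closed immersion, and $Z$ finite over $V$.

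Given these properties, the square
\[
  \xymatrix{
   {X}\setminus{Z} \ar[r] \ar[d] &
   {X} \ar[d]^-p
  \\
   \AA_V^1\setminus p({Z}) \ar[r] &
   \AA_V^1
  }
\]
is cartesian: a point of $X$ mapping into $\AA^1_V\setminus p(Z)$ cannot lie in $Z$ since $p(Z)$ is exactly the image, and since $p|_Z$ is a closed immersion the preimage of $p(Z)$ is precisely $Z$. That it is a Nisnevich-distinguished square is then formal from $p$ being \'etale and inducing an isomorphism $p^{-1}(p(Z))=Z\xrightarrow{\sim} p(Z)$ on the reduced closed complements (\cf\ the criterion recalled in Lemma~\ref{limitbasechange}.\eqref{limitbasechange4}). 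The final clause, that $X/(X\setminus Z)\to \AA^1_V/(\AA^1_V\setminus p(Z))$ is an isomorphism of Nisnevich sheaves, is the standard excision property of Nisnevich-distinguished squares \cite[Thm.~3.2.21]{MV99}.

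The main obstacle I anticipate is the special-fibre case: one must ensure simultaneously that the chosen projection is ``generic enough'' on the infinite residue field $\mathbb{F}$ for the field version of Gabber's lemma to apply to $X_\sigma$ \emph{and} that it lifts to a projection over $\mathfrak{o}$ compatible with the behaviour over the generic point, so that the single scheme $V$ and map $p$ work over all of $S$; this is exactly where the combination of henselianity (to lift the fibrewise-good choice) with the relative Noether normalization \cite[Thm.~4.6]{Kai15} (to control the finite part) does the work, and where the hypothesis of infinite residue fields is essential.
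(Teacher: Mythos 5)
There is a genuine gap, and it sits exactly at the point the paper identifies as the crux: the finiteness of $Z$ over $V$. Your spreading-out step asserts that ``the finiteness locus of $Z\to V$'' is open and contains the chosen fibre point, so that a projection which is good on the special fibre works over all of $S$ after shrinking. This is false over a discrete valuation ring: finiteness of $Z\to\AA^{n-1}_S$ cannot be detected fibrewise, and there is no open locus in $X$ (or in $S$) to shrink to. The paper's Example~\ref{ex: not fibrewise dense} is precisely the counterexample to this kind of reasoning: $\Spec\bigl(\mathfrak{o}[x_1]/(\pi x_1^2+x_1+1)\bigr)\to\Spec(\mathfrak{o})$ is quasi-finite with finite special and generic fibres, yet not finite, because a root escapes to infinity over the generic point. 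In terms of linear projections, what one needs is that the closure $\bar{Z}$ of $Z$ in $\PP^N_S$ (closure over $S$, not the fibrewise closure) avoids the indeterminacy locus $L_{(u_1,\dots,u_{n-1})}$ (Lemma~\ref{lem: shafarevich}); but $\bar{Z}_\sigma$ can be strictly larger than the closure of $Z_\sigma$, so a projection chosen to be generic for $X_\sigma$, even if lifted by henselianity, gives no control over $\bar Z\cap H_\infty$. This is why the paper must first replace $(X,z)$ by a Nisnevich neighbourhood admitting a special closed embedding for which $Z$ is \emph{fibrewise dense in its closure} (Proposition~\ref{prop: kay density lemma}, the actual use of \cite[Thm.~4.6]{Kai15} --- which is not a relative Noether normalization ``finite over $\AA^d_S$'' as you describe, but a density-of-the-closure statement), and only then is the good locus $W_1\subseteq\mathfrak{E}_n$ of projections (Lemma~\ref{lem: finitness}) both open (via properness of an incidence correspondence) and with nonempty special fibre, so that the infinite residue field plus henselian lifting produce an $\mathfrak{o}$-point. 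Your proposal mentions Kai's theorem but does not connect it to this issue, and without it the central finiteness claim is unproved.

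A second, smaller gap: cartesianness of the square needs $p^{-1}(p(Z))=Z$, which does \emph{not} follow from $p|_Z$ being a closed immersion --- points of $X\setminus Z$ may still map into $p(Z)$. The paper arranges this as a separate condition (part~\eqref{maintheoremfour} of Theorem~\ref{thm: gabber presentation}) by deleting $p_{\underline{u}}^{-1}(p_{\underline{u}}(Z'))\setminus Z'$ and verifying in Lemma~\ref{lem: securing U} that the result is still an open neighbourhood of $z$ (using that $Z'\to p_{\underline u}(Z')$ is an isomorphism and comparing \'etale loci); this verification is not automatic and is missing from your argument. The remaining parts of your outline (reduction of the generic-fibre case to \cite[Thm.~3.1.1]{CTHK97}, use of the \'etale and radicial conditions on the fibre $F$ over the special point, and the final excision statement via \cite[Thm.~3.2.21]{MV99}) do match the paper's route.
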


\begin{remark}\label{rem: earlier results}
The essential case of Theorem~\ref{thm: gabber distinguished square} is that of an effective Cartier-divisor $Z\hookrightarrow X$ (\cf~proof of Theorem~\ref{thm: gabber presentation}, below).
Earlier results for relative effective Cartier-divisors $Z\hookrightarrow X$ over discrete valuation rings are \cite[Lem.~1]{GilletLevine87} and \cite[Thm.~3.4]{Dutta95}.
These results are even Zariski local and do not assume infinite residue fields. However, they do not include an analogue for the crucial finiteness claim of $Z/V$.
\end{remark}

The map $p$ in Theorem~\ref{thm: gabber distinguished square} will be provided by a careful choice of suitable linear projections.
Before we give a short outline of the proof, let us first recall some basic facts about linear projections.

\subsection*{Linear projections}
Denote by $\mathbb{A}_{x_1/x_0,\dots,x_N/x_0,S} = \mathbb{A}_{\underline{x}/x_0,S}$ \resp\ $\mathbb{P}_{x_0:{\dots}:x_N,S} = \mathbb{P}_{\underline{x},S}$ the affine \resp\ projective $N$-space $\mathbb{A}_S^N$ \resp\ $\mathbb{P}_S^N$ with coordinates $\frac{x_1}{x_0},\dots,\frac{x_N}{x_0}$ \resp\ homogeneous coordinates $x_0:{\dots}:x_N$.
We get the standard open embedding $\mathbb{A}_{\underline{x}/x_0,S}\hookrightarrow \mathbb{P}_{\underline{x},S}$.
By abuse of notation, we identify $\frac{x_1}{x_0},\dots,\frac{x_N}{x_0}$ with $x_1,\dots,x_N$ and write just $\mathbb{A}_{\underline{x},S}$ for $\mathbb{A}_{\underline{x}/x_0,S}$ (and similarly for other coordinates).

Let $\mathbb{A}_{x_1,\dots,x_N,S}^\vee = \mathbb{A}_{x_1^\ast,\dots,x_N^\ast,S}$ be the dual affine space, \ie, for any $\mathfrak{o}$-Algebra $A$, $\mathbb{A}_{x_1,\dots,x_N,S}^\vee(A)$ is the free $A$-module generated by the coordinate functions $x_1,\dots,x_N$ of $\mathbb{A}_{\underline{x},S}$.
Dually we can view $\mathbb{A}_{\underline{x},S}(A)$ as the free $A$-module generated by the dual coordinate functions $x_1^\ast,\dots,x_N^\ast$.
To be more precise, take $r$ copies of $\mathbb{A}_{\underline{x},S}^\vee$ and denote the $j^{\rm th}$-copy by $\mathbb{A}_{\underline{x},S}^{\vee,(j)}$ with coordinate functions $x_{i,j}^\ast := x_i^\ast$ for $1\leq i \leq N$.
Mapping $t_j \mapsto \sum_ix_i\otimes x_{i,j}^\ast$ defines the dual pairing $\langle-,-\rangle\colon\mathbb{A}_{\underline{x},S} \times_S \mathbb{A}_{\underline{x},S}^{\vee,(j)} \rightarrow \mathbb{A}_{t_j,S}$.
Via this pairing, each $A$-point $u$ of $\mathbb{A}_{\underline{x},S}^{\vee,(j)}$ induces a linear $A$-morphism (in abuse of notation also denoted by) $u$ defined as the composition
\begin{equation*}
 \xymatrix{
  \mathbb{A}_{\underline{x},A} \ar[r]^-{{\rm id}\times u}&
  \mathbb{A}_{\underline{x},A} \times_A \mathbb{A}_{\underline{x},A}^{\vee,(j)} \ar[r]^-{\langle-,-\rangle} &
  \mathbb{A}_{t_j,A}
 }
 \text{ via }
 \xymatrix{
  t_j \ar@{|->}[r] &
  \sum_i\langle x_i^\ast,u\rangle x_i.
 }
\end{equation*}
This map is precisely the linear form given by the $A$-point $u$ seen as the corresponding linear combination of the $x_i$ in $\mathbb{A}_{\underline{x},S}^{\vee,(j)}(A) = \bigoplus_i Ax_i$.
We define
\begin{equation*}
 \mathfrak{E}_r := \mathbb{A}_{\underline{x},S}^{\vee,(1)} \times_S {\dots} \times_S \mathbb{A}_{\underline{x},S}^{\vee,(r)}
\end{equation*}
and look at it as the space of linear projections $\mathbb{A}_{\underline{x},S} \rightarrow \mathbb{A}_{t_1,\dots,t_r,S}$.
Indeed, each $A$-point $\underline{u}$ of $\mathfrak{E}_r$ induces a linear $A$-morphism
\begin{equation*}
 \underline{u}\colon
 \xymatrix{
  \mathbb{A}_{\underline{x},A} \ar[r] &
  \mathbb{A}_{t_1,\dots,t_r,A}
 }
 \text{ via }
 \xymatrix{
  t_j \ar@{|->}[r] &
  \sum_i \langle x_{i}^\ast, u_j \rangle x_i.
 }
\end{equation*}
Mapping $t_0\mapsto x_0$, this extends to a rational map
\begin{equation*}
 \underline{u}\colon
 \xymatrix{
  \mathbb{P}_{\underline{x},A} \ar@{-->}[r] &
  \mathbb{P}_{t_0:{\dots}:t_r,{A}}
 }
\end{equation*}
with locus of indeterminacy $L_{\underline{u}} := V_+(x_0,u_1,\dots,u_r) \subseteq H_\infty$, where $u_j$ is the linear form in the coordinates $x_1,\dots,x_N$ corresponding to the $j^{\rm th}$ component of $\underline{u}$ and $H_\infty \subset \mathbb{P}_{\underline{x},A}$ is the hyperplane at infinity $V_+(x_0)$.

Assume ${Y} \hookrightarrow \mathbb{A}_{\underline{x},A}$ is a (reduced) closed subscheme with (reduced) projective closure $\bar{{Y}}\hookrightarrow \mathbb{P}_{\underline{x},A}$ such that $\bar{{Y}} \cap L_{\underline{u}} = \emptyset$.
Then $\underline{u}$ induces regular maps
\begin{equation*}
 p_{\underline{u}}\colon
 \xymatrix{
  {Y} \ar[r] &
  \mathbb{A}_{t_1,\dots,t_r,A}
 }
 \text{ and }
 \bar{p}_{\underline{u}}\colon
 \xymatrix{
  \bar{{Y}} \ar[r] &
  \mathbb{P}_{t_0:{\dots}:t_r,A}
 }
\end{equation*}
satisfying $ p_{\underline{u}} = \bar{p}_{\underline{u}} \times_{\mathbb{P}_{\underline{t},A}}\mathbb{A}_{\underline{t},A}$.
Observe that \cite[Thm.~I.5.3.7]{Shafarevich74} remains true in our setting:

\begin{lemma}\label{lem: shafarevich}
For any $\underline{u} \in \mathfrak{E}_r(A)$ and any closed ${Y} \hookrightarrow \mathbb{A}_{\underline{x},S}$ with $\bar{{Y}} \cap L_{\underline{u}} = \emptyset$, the linear projections $p_{\underline{u}}$ and $\bar{p}_{\underline{u}}$ are finite maps.
\end{lemma}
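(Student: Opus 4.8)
The plan is to establish finiteness of the projective projection $\bar{p}_{\underline{u}}\colon\bar{Y}\to\mathbb{P}_{\underline{t},A}$ first, and then to deduce finiteness of $p_{\underline{u}}$ by base change. Since $\bar{Y}$ is closed in $\mathbb{P}_{\underline{x},A}$, its structure map to $\Spec A$ is projective, hence proper; as $\mathbb{P}_{\underline{t},A}$ is separated over $\Spec A$, the morphism $\bar{p}_{\underline{u}}$ is itself proper, and of finite type. By the standard fact that a proper quasi-finite morphism is finite (\cf~\cite[Thm.~8.11.1]{EGA4}), it therefore suffices to show that every fibre of $\bar{p}_{\underline{u}}$ is finite; this is the only step where the hypothesis $\bar{Y}\cap L_{\underline{u}}=\emptyset$ is used.

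To analyse a fibre I would fix a point $q$ of $\mathbb{P}_{\underline{t},A}$ with residue field $K$ and homogeneous coordinates $(a_0:\dots:a_r)$, and write $\ell_0:=x_0$, $\ell_j:=u_j$ for the $r+1$ linear forms over $A$ defining $\underline{u}$. Let $\bar{Y}_K$ be the base change of $\bar{Y}$ to $K$, which is still disjoint from $L_{\underline{u},K}$, and let $\Lambda_q\subseteq\mathbb{P}^N_K$ be the linear subspace cut out by the forms $a_i\ell_j-a_j\ell_i$ for $i<j$. One checks directly that $\bar{p}_{\underline{u}}^{-1}(q)$ coincides, as a topological space, with $\bar{Y}_K\cap\Lambda_q$. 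Choosing an index $m$ with $a_m\neq0$, one sees that on $\Lambda_q$ every $\ell_j$ is a $K$-multiple of $\ell_m$, so $L_{\underline{u},K}$ — which is contained in $\Lambda_q$ — coincides with the zero locus in $\Lambda_q$ of the single linear form $\ell_m|_{\Lambda_q}$; hence $L_{\underline{u},K}$ is either all of $\Lambda_q$ or a hyperplane in it. In the first case $\bar{Y}_K\cap\Lambda_q\subseteq\bar{Y}_K\cap L_{\underline{u},K}=\emptyset$. In the second case $\Lambda_q\setminus L_{\underline{u},K}$ is the complement of a hyperplane in the projective space $\Lambda_q$, hence affine, so $\bar{Y}_K\cap\Lambda_q=\bar{Y}_K\cap(\Lambda_q\setminus L_{\underline{u},K})$ is affine over $K$; being also closed in $\mathbb{P}^N_K$, it is proper over $K$, and therefore finite. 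In either case the fibre is finite, so $\bar{p}_{\underline{u}}$ is quasi-finite and thus finite. Finally, the identity $p_{\underline{u}}=\bar{p}_{\underline{u}}\times_{\mathbb{P}_{\underline{t},A}}\mathbb{A}_{\underline{t},A}$ recorded above exhibits $p_{\underline{u}}$ as a base change of the finite morphism $\bar{p}_{\underline{u}}$, so $p_{\underline{u}}$ is finite as well.

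I do not expect a serious obstacle here: this is essentially the relative, scheme-theoretic counterpart of \cite[Thm.~I.5.3.7]{Shafarevich74}, and the only point requiring mild care is that we must not assume $\underline{u}$ to be generic, so $L_{\underline{u}}$ may have larger dimension than expected and the fibre estimate has to be run through the linear forms $\ell_i$ directly rather than via a dimension count; the passage to a possibly mixed-characteristic base $\Spec A$ is harmless, since both quasi-finiteness and the criterion ``proper $+$ quasi-finite $=$ finite'' are fibrewise in nature.
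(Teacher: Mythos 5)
Your argument is correct, and its skeleton coincides with the paper's: both reduce everything to $\bar{p}_{\underline{u}}$, observe that it is a projective (hence proper, finite type) morphism, invoke ``proper $+$ quasi-finite $\Rightarrow$ finite'', and then recover $p_{\underline{u}}$ as the base change of $\bar{p}_{\underline{u}}$ along $\mathbb{A}_{\underline{t},A}\hookrightarrow\mathbb{P}_{\underline{t},A}$. Where you genuinely diverge is in the proof of quasi-finiteness, which is the only substantive step. The paper does not compute fibres of $\bar{p}_{\underline{u}}$ at all: it base-changes to the two geometric points $\bar\sigma$ and $\bar\eta$ of $S=\Spec(\mathfrak{o})$ and quotes the classical projection theorem \cite[Thm.~I.5.3.7]{Shafarevich74} over an algebraically closed field, so quasi-finiteness is checked fibrewise over $S$. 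You instead reprove that classical statement in relative, scheme-theoretic form: the fibre over a point $q$ with residue field $K$ is $\bar{Y}_K\cap\Lambda_q$; on the linear space $\Lambda_q$ the centre $L_{\underline{u},K}$ is cut out by the single form $\ell_m$; and the fibre, being closed in $\mathbb{P}^N_K$ and contained in the affine complement of that hyperplane (or empty), is proper and affine over $K$, hence finite. Both routes are valid. Yours buys self-containedness --- no appeal to a result stated only in the classical language of varieties over algebraically closed fields --- and it treats an arbitrary $\mathfrak{o}$-algebra $A$ uniformly, exactly as the lemma is stated, rather than via the points of $S$; the paper's route is shorter because it outsources precisely the linear-algebraic fibre analysis you carry out by hand. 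The one place worth a word of care in your write-up is the identification $\bar{p}_{\underline{u}}^{-1}(q)=\bar{Y}_K\cap\Lambda_q$: a priori the locus $V(a_i\ell_j-a_j\ell_i)$ contains $L_{\underline{u},K}$ as well, and the identification holds only because $\bar{Y}\cap L_{\underline{u}}=\emptyset$ removes that ambiguity --- you use this implicitly, and it is exactly where the hypothesis enters a second time.
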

\begin{proof}
It suffices to show that $\bar{p}_{\underline{u}}$ is finite.
As a map between projective schemes over $S$, $\bar{p}_{\underline{u}}$ itself is projective. 
It remains to show that $\bar{p}_{\underline{u}}$ is quasi-finite:
Let $\bar{\sigma}$ \resp\ $\bar{\eta}$ be a geometric point of $S$ over $\sigma$ \resp\ $\eta$. 
By \cite[Thm.~I.5.3.7]{Shafarevich74}, $\bar{\sigma}^*\bar{p}_{\underline{u}}$ \resp\ $\bar{\eta}^*\bar{p}_{\underline{u}}$ is finite.
It follows that $\bar{p}_{\underline{u}}$ is finite on the special \resp\ geometric fibre ${\sigma}^*\bar{p}_{\underline{u}}$ \resp\ ${\eta}^*\bar{p}_{\underline{u}}$, hence quasi-finite.
\end{proof}

\subsection*{Outline of the proof of Theorem~\ref{thm: gabber distinguished square}}
The proof of Theorem~\ref{thm: gabber distinguished square} will principally follow the proof of Gabber's geometric presentation lemma over fields in \cite{CTHK97}: 
The crucial part of Theorem~\ref{thm: gabber distinguished square} turns out to be the finiteness claim for $Z/V$.
We make the Ansatz $p = p_{\underline{u}}$ for a closed embedding $i\colon X\hookrightarrow \mathbb{A}_{x_1,\dots,x_N,S}$ and a linear projection $\underline{u} \in \mathfrak{E}_n(\mathfrak{o})$, for $n$ the relative dimension of $X/S$.
Using Lemma~\ref{lem: shafarevich}, one can show that the property
\begin{center}
\textit{the induced map $p_{(u_1,\dots,u_{n-1})}\vert_Z\colon Z\rightarrow \mathbb{A}_{t_1,\dots,t_{n-1},S}$ is finite} 
\end{center}
is an open property in our space of linear projections $\mathfrak{E}_n$ (\cf~proof of Lemma~\ref{lem: finitness}, below).
If $W \hookrightarrow \mathfrak{E}_n$ is the corresponding open locus, we first need to make sure that the special fibre $W_\sigma$ is nonempty -- because the residue field $\mathbb{F}$ is infinite and $\mathfrak{E}_n$ isomorphic to an affine space, $W(\mathfrak{o})$ will automatically be nonempty in this case.
We will see in the proof of Lemma~\ref{lem: finitness} that $W_\sigma$ is nonempty, as soon as $X$ over $S$ is fibrewise dense inside its closure in $\mathbb{P}_{x_0:\dots:x_N,S}$.
In fact, we just need this closure to be dense on the special fibre, but this obviously is equivalent to fibrewise density.
Therefore, special care needs to be taken about the choice of our initial closed embedding $i\colon X \hookrightarrow \mathbb{A}_{\underline{x},S}$.
In Proposition~\ref{prop: kay density lemma} we will provide a closed embedding of this type, but the price to pay is that we need to replace $(X,z)$ by a suitable Nisnevich-neighbourhood.

Let $F = F_{\underline{u}}$ denote the set of preimages under $p_{(u_1,\dots,u_{n-1})}$ of $p_{(u_1,\dots,u_{n-1})}(z)$ in $Z$.
The next goal is to find $\underline{u}$ in $W(\mathfrak{o})$ such that $p_{\underline{u}}$ is \'{e}tale around each point of $F$ and the restriction of $p_{\underline{u}}$ to $F$ is universally injective.  
This again corresponds to non-empty open conditions in (the special fibre of) $\mathfrak{E}_n$ (\cf~the proof of Lemma~\ref{lem: etale and radicial}).
Shrinking $W$ accordingly, we may assume that this is the case for all $\underline{u} \in W(\mathfrak{o})$ (\cf~Proposition~\ref{prop: gabber A}).

Making use of the finiteness of $p_{(u_1,\dots,u_{n-1})}\vert_Z$, we get an open neighbourhood $V = V_{\underline{u}}$ of $p_{(u_1,\dots,u_{n-1})}(z)$ in $\mathbb{A}_{t_1,\dots,t_{n-1},S}$ such that $p_{\underline{u}}$ is \'{e}tale at all points of $Z\cap p_{(u_1,\dots,u_{n-1})}^{-1}(V)$ and the restriction of $p_{\underline{u}}$ induces a closed embedding $Z\cap p_{(u_1,\dots,u_{n-1})}^{-1}(V) \hookrightarrow \mathbb{A}_V^1$ (\cf~Lemma~\ref{lem: securing V}).
Finally, replacing $(X,z)$ by a suitable Zariski-neighbourhood, we establish in Lemma~\ref{lem: securing U} the remaining properties claimed in Theorem~\ref{thm: gabber distinguished square}.

Summing up, we will show slightly more in fact:
We will prove the following version of \cite[Thm.~3.2.2]{CTHK97} over $\mathfrak{o}$.

\begin{theorem}\label{thm: gabber presentation}
Let ${X}=\Spec({A})/S$ be a smooth affine $S$-scheme of finite type, fibrewise of pure dimension $n$ and let ${Z} =\Spec({B}) \hookrightarrow {X}$ be a proper closed subscheme.
Let $z$ be a point in ${Z}$.
If $z$ lies in the special fibre, suppose that ${Z}_\sigma \neq {X}_\sigma$.
Then, Nisnevich-locally around $z$, there exists a closed embedding ${X} \hookrightarrow \mathbb{A}_{S}^N$ and a Zariski-open subset $W\subseteq \mathfrak{E}_n$ with $W(\mathfrak{o}) \neq \emptyset$, \st\ the following holds:
\\
For all $\underline{u} \in W(\mathfrak{o})$ with linear projections $p_{\underline{u}} = p_{(u_1,\dots,u_{n-1})} \times_S p_{u_n}\colon {X} \rightarrow \mathbb{A}_S^n = \mathbb{A}_S^{n-1} \times_S \mathbb{A}_S^{1}$ there are Zariski-open neighbourhoods $p_{(u_1,\dots,u_{n-1})}(z)\in V \subseteq \mathbb{A}_S^{n-1}$ and $z\in U \subseteq p_{(u_1,\dots,u_{n-1})}^{-1}(V)$ satisfying:
\begin{enumerate}
 \item\label{maintheoremone} $p_{(u_1,\dots,u_{n-1})}\vert_{{Z}}\colon {Z} \rightarrow \mathbb{A}_S^{n-1}$ is finite,
 \item\label{maintheoremtwo} ${Z} \cap U = {Z}\cap p_{(u_1,\dots,u_{n-1})}^{-1}(V)$,
 \item\label{maintheoremthree} $p_{\underline{u}}\vert_U\colon U\rightarrow \mathbb{A}_S^n$ is \'{e}tale and restricts to a closed embedding ${Z} \cap U \hookrightarrow \mathbb{A}_V^1$ and
 \item\label{maintheoremfour} $p_{\underline{u}}^{-1}(p_{\underline{u}}({Z} \cap U))\cap U = {Z} \cap U$.
\end{enumerate}
\end{theorem}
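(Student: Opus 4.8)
The strategy is to transplant the argument of \cite[Thm.~3.2.2]{CTHK97} to the base $S = \Spec(\mathfrak{o})$, organizing it around the four claims \eqref{maintheoremone}--\eqref{maintheoremfour} and reducing each to an open non-emptiness condition on the affine space $\mathfrak{E}_n$ of linear projections. First I would set up the initial data: after replacing $(X,z)$ by a Nisnevich-neighbourhood, choose (via Proposition~\ref{prop: kay density lemma}, to be proved below) a closed embedding $i\colon X \hookrightarrow \mathbb{A}_{\underline x,S}^N$ such that $X$ is \emph{fibrewise dense} in its projective closure $\bar X \subseteq \mathbb{P}_{\underline x, S}$; this is the point where the Noether-normalization over a Dedekind base (\cf~\cite[Thm.~4.6]{Kai15}) enters, and it is what makes the special fibre of the relevant open loci non-empty. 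Since $\mathfrak{E}_n \cong \mathbb{A}_S^{nN}$ is an affine space over $\mathfrak{o}$ and $\mathbb{F}$ is infinite, any Zariski-open $W \subseteq \mathfrak{E}_n$ with $W_\sigma \neq \emptyset$ automatically has $W(\mathfrak{o}) \neq \emptyset$ — this is the one place the infinite-residue-field hypothesis is used, and it replaces the field-case appeal to infinitude of $k$.

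Next I would produce $W$ in stages, each a shrinking of the previous one. Stage one (giving \eqref{maintheoremone}): using Lemma~\ref{lem: shafarevich}, the condition that $\overline{p_{(u_1,\dots,u_{n-1})}}(\bar Z) $ avoids the centre of indeterminacy is open on $\mathfrak{E}_{n-1}$, and on that locus $p_{(u_1,\dots,u_{n-1})}\vert_Z$ is finite; fibrewise density of $X$ (hence of $\bar Z \subseteq \bar X$) guarantees this locus meets $\sigma$ (this is carried out in the proof of Lemma~\ref{lem: finitness}). Stage two (towards \eqref{maintheoremthree}): impose that $p_{\underline u}$ be \'etale along the fibre $F_{\underline u} = p_{(u_1,\dots,u_{n-1})}^{-1}(p_{(u_1,\dots,u_{n-1})}(z)) \cap Z$ and that $p_{\underline u}\vert_F$ be universally injective — standard Jacobian-criterion and separation-of-points arguments show these are non-empty open conditions on the special fibre (Lemma~\ref{lem: etale and radicial}), hence on $\mathfrak{E}_n$. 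Intersecting, we obtain $W$ with $W_\sigma \neq \emptyset$, so $W(\mathfrak{o}) \neq \emptyset$ (Proposition~\ref{prop: gabber A}).

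Finally I would fix any $\underline u \in W(\mathfrak{o})$ and extract the open sets $V$ and $U$. Because $p_{(u_1,\dots,u_{n-1})}\vert_Z$ is finite, the locus in $\mathbb{A}_S^{n-1}$ over which $p_{\underline u}$ is \'etale at every point of $Z$ and $p_{\underline u}\vert_Z$ is a closed immersion into $\mathbb{A}^1$ is open and contains $p_{(u_1,\dots,u_{n-1})}(z)$; take $V$ inside it (Lemma~\ref{lem: securing V}). Then shrink to a Zariski-neighbourhood $U$ of $z$ inside $p_{(u_1,\dots,u_{n-1})}^{-1}(V)$ on which $p_{\underline u}$ is \'etale, $Z \cap U = Z \cap p_{(u_1,\dots,u_{n-1})}^{-1}(V)$, and $p_{\underline u}^{-1}(p_{\underline u}(Z\cap U)) \cap U = Z \cap U$ — the last equality uses finiteness plus universal injectivity on $F$ to rule out spurious preimages after shrinking (Lemma~\ref{lem: securing U}). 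This yields \eqref{maintheoremone}--\eqref{maintheoremfour}, and specializing $z$ to the divisorial case recovers Theorem~\ref{thm: gabber distinguished square}. I expect the main obstacle to be two-fold: first, arranging the fibrewise-dense closed embedding over the non-reduced, mixed-characteristic base (handled by Proposition~\ref{prop: kay density lemma} via Kai's Noether normalization), since over $\mathfrak{o}$ one cannot simply argue generically; and second, ensuring that the relevant open conditions are non-empty \emph{on the special fibre} $\mathbb{F}$ rather than merely on the generic fibre — once special-fibre non-emptiness is in hand, the infinite-residue-field hypothesis closes the gap by producing an honest $\mathfrak{o}$-point.
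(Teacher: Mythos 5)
Your plan follows the paper's proof essentially step for step: the same staging (a Nisnevich-local refinement producing a good embedding via Kai's Noether normalization, then open loci in $\mathfrak{E}_n$ for finiteness and for \'{e}tale-plus-radicial along $F$, non-emptiness checked on the special fibre and lifted to an $\mathfrak{o}$-point by surjectivity of the reduction map over the henselian base with infinite residue field, and finally the neighbourhoods $V$ and $U$ exactly as in Lemmas~\ref{lem: securing V} and~\ref{lem: securing U}). The omitted preliminary reductions (the generic-fibre case being covered by \cite[Thm.~3.2.2]{CTHK97}, enlarging $Z$ to a hypersurface $V(f)$ with $f$ non-zero on the special fibre, and passing to a closed specialization of $z$ as in Reduction~\ref{red: closed point}) are minor.

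There is, however, one genuine error in the way you set up the density input, and it sits exactly at the point where the DVR case differs from the field case. You ask for a closed embedding with $X$ fibrewise dense in its projective closure $\bar X$ and then assert ``fibrewise density of $X$ (hence of $\bar Z\subseteq\bar X$)''. That implication is false: fibrewise density of $X$ in $\bar X$ does not give fibrewise density of $Z$ in its own closure $\bar Z$, and the latter is what Lemma~\ref{lem: finitness} actually needs (one must know $\bar Z_\sigma=\overline{Z_\sigma}$ in order to quote \cite[Prop.~1.1]{Grayson78} over $\mathbb{F}$ and conclude $W_{1,\sigma}\neq\emptyset$). Example~\ref{ex: not fibrewise dense} is precisely a counterexample: $X=\AA^1_\mathfrak{o}$ is fibrewise dense in $\PP^1_\mathfrak{o}$, yet $Z=V(\pi x_1^2+x_1+1)$ acquires an extra point at infinity in the special fibre of its closure. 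This is why Proposition~\ref{prop: kay density lemma} is stated for $Z$, and why its proof must first apply Kai's theorem to $Z$ itself and then extend the resulting embedding to a Nisnevich neighbourhood of $X$ (forcing $Z'$ into a linear subspace so that its closure is the expected one); the density of $X$ plays no role. As written, your stage-one argument for $W_{1,\sigma}\neq\emptyset$ would not go through; replacing ``$X$ fibrewise dense'' by ``$Z$ fibrewise dense in $\bar Z$'', with the embedding supplied by Proposition~\ref{prop: kay density lemma}, repairs it and brings your plan in line with the paper.
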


The proof of Theorem~\ref{thm: gabber presentation} will follow the proof in \cite{CTHK97} \resp~the outline sketched above.

\begin{proof}
Clearly, we may assume that both ${X}$ and ${Z}$ are connected.
Next, observe that the case of $z$ lying in the generic fibre ${X}_\eta$ of ${X}/S$ is already covered by \cite[Thm.~3.2.2]{CTHK97}.
Thus, we may assume that $z$ lies in the special fibre ${X}_\sigma$ of ${X}/S$.
Finally, observe that we may enlarge ${Z}$.
In particular, picking any element $f$ in the kernel of ${A}\twoheadrightarrow {B}$ with $f\neq 0$ in the special fibre ${A}\otimes_{\mathfrak{o}}\mathbb{F}$, we may assume ${B} = {A}/f$, \ie, ${Z}=V(f)$.
\\
We will follow the outline of the proof sketched above:
Up to a refinement by a suitable Nisnevich-neighbourhood $(X^\prime,z^\prime)\rightarrow(X,z)$, Proposition~\ref{prop: kay density lemma} will provide a closed embedding $i_0\colon {X}^\prime \hookrightarrow \mathbb{A}_{x_1,\dots,x_N,S}$ such that $Z^\prime = Z\times_XX^\prime$ is fibrewise dense in its Zariski-closure $\bar{Z}^\prime$ in $\mathbb{P}_{x_0:\dots:X_N,S}$.
Replacing our base-point $z$ by a specialization to a closed point $z_0$ in the image of $X^\prime \rightarrow X$, we can assume that $z$ is closed itself (\cf~Reduction~\ref{red: closed point}).
Further, we replace $(X,z)$ by $(X^\prime,z^\prime)$, \ie, we assume $X^\prime = X$.
Composing the closed embedding $i_0\colon {X} \hookrightarrow \mathbb{A}_{\underline{x},S}$ with a linear projection $\mathbb{A}_{\underline{x},S} \rightarrow \mathbb{A}_{t_1,\dots,t_n,S}$, corresponding to an $\mathfrak{o}$-point $\underline{u}$ of the space of linear projections $\mathfrak{E}_n$, we get maps
\begin{equation*}
 \xymatrix{
  X \ar[r]^-{p_{\underline{u}}} \ar[rd]_-{p_{(u_1,\dots,u_{n-1})}\hspace{2ex}} &
  \mathbb{A}_{t_1,\dots,t_n,S} \ar[d]^-{{\rm pr}} \phantom{.} \\
  &
  \mathbb{A}_{t_1,\dots,t_{n-1},S}.
 }
\end{equation*}
Here $n$ is the dimension of $X$.
Proposition~\ref{prop: gabber A} will provide an open $W$ in our space of linear projections $\mathfrak{E}_n$ with $W(\mathfrak{o})$ non-empty and for each linear projection $\underline{u}$ in $W(\mathfrak{o})$ the restriction $p_{(u_1,\dots,u_{n-1})}\vert_Z$ is finite (\ie, part~\eqref{maintheoremone} in Theorem~\ref{thm: gabber presentation}), $p_{\underline{u}}$ is \'{e}tale around $F = F_{\underline{u}} = p_{(u_1,\dots,u_{n-1})}^{-1} (p_{(u_1,\dots,u_{n-1})}(z))\cap Z$ and $p_{\underline{u}}\vert_F\colon F \rightarrow p_{\underline{u}}(F)$ is universally injective.
\\
Fix any such $\underline{u}$ in $W(\mathfrak{o})$.
In Lemma~\ref{lem: securing V}, we will replace $\mathbb{A}_{t_1,\dots,t_{n-1},S}$ by a Zariski-neighbourhood $V=V_{\underline{u}}$ of $p_{(u_1,\dots,u_{n-1})}(z)$ such that $p_{\underline{u}}$ is \'{e}tale around every point of $Z\cap p_{(u_1,\dots,u_{n-1})}^{-1}(V)$ and such that the induced restriction $p_{\underline{u}}\vert_{Z\cap p_{(u_1,\dots,u_{n-1})}^{-1}(V)}\colon Z\cap p_{(u_1,\dots,u_{n-1})}^{-1}(V) \hookrightarrow \mathbb{A}_{t_n,V}$ is a closed embedding.
In Lemma~\ref{lem: securing U}, we will shrink $p_{(u_1,\dots,u_{n-1})}^{-1}(V)$ to a Zariski-neighbourhood $U_1$ of $z$ satisfying the analogue of \eqref{maintheoremfour} in Theorem~\ref{thm: gabber presentation}, \ie, $p_{\underline{u}}^{-1}(p_{\underline{u}}(Z\cap U_1))\cap U_1 = Z\cap U_1$, without changing $Z\cap p_{(u_1,\dots,u_{n-1})}^{-1}(V) = Z\cap U_1$.
In particular, $p_{\underline{u}}$ restricts to a closed embedding $Z\cap U_1 \rightarrow \mathbb{A}_{t_n,V}$.
Since $p_{\underline{u}}$ is \'{e}tale already around every point of $Z\cap U_1$, we may shrink $U_1$ a bit more (by intersecting it with the open \'{e}tale locus of $p_{\underline{u}}$) to get the desired Zariski-neighbourhood $U=U_{\underline{u}}$ of $(X,z)$ satisfying \eqref{maintheoremtwo}, \eqref{maintheoremthree} and \eqref{maintheoremfour} in Theorem~\ref{thm: gabber presentation}.
\end{proof}

\subsection*{Towards the finiteness part}
The key part in the proof of Theorem~\ref{thm: gabber presentation} is the finiteness assertion \eqref{maintheoremone}.
By Lemma~\ref{lem: shafarevich}, we need to find a closed embedding $i_0\colon {X} \hookrightarrow \mathbb{A}_{\underline{x},S}$ and an $\mathfrak{o}$-point $\underline{u}\in \mathfrak{E}_n(\mathfrak{o})$ \st\ the closure $\bar{{Z}}$ of ${Z}$ in $\mathbb{P}_{\underline{x},S}$ intersects $L_{(u_1,\dots,u_{n-1})}$ trivially.
Unfortunately, it is not enough to require that the fibrewise closure of ${Z}$ in $\mathbb{P}_{\underline{x},S}$ misses $L_{(u_1,\dots,u_{n-1})}$.
Indeed, $\bar{{Z}}$ might not be the fibrewise projective closure of ${Z}$ over $S$ -- the special fibre $\bar{Z}_\sigma$ might be strictly larger than the projective closure of $Z_\sigma$:

\begin{example}\label{ex: not fibrewise dense}
Let ${A} = \mathfrak{o}[x_1]$ and ${B}= \mathfrak{o}[x_1]/(\pi x_1^2 +x_1+1)$.
Then $\Spec({B}) \subset \mathbb{P}_{x_0:x_1,S}$ is fibrewise closed but at least one solution of $\pi x_1^2 +x_1+1$ in $k^{\rm alg}$ specializes to $\infty$ in $\mathbb{P}_{x_0:x_1,\mathbb{F}}$, \ie, $\Spec({B}) \subset \mathbb{P}_{x_0:x_1,S}$ is not closed.
\end{example}

To avoid these difficulties, we need to make a careful choice for the embedding $i_0\colon {X} \hookrightarrow \mathbb{A}_{\underline{x},S}$:

\begin{proposition}\label{prop: kay density lemma}
Nisnevich-locally around $z$, there exists  a closed embedding $i_0\colon {X} \hookrightarrow \mathbb{A}_{\underline{x},S}$ with ${Z}$ fibrewise dense over $S$ inside its closure $\bar{{Z}}$ in $\mathbb{P}_{\underline{x},S}$.
\end{proposition}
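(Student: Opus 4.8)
The goal is to produce a closed embedding $i_0\colon X\hookrightarrow \mathbb{A}_{\underline{x},S}$ such that the scheme-theoretic closure $\bar Z$ of $Z$ in $\mathbb{P}_{\underline{x},S}$ meets the hyperplane at infinity $H_\infty = V_+(x_0)$ in a subscheme that is nowhere dense in \emph{each} fibre of $\bar Z/S$ --- equivalently, such that $Z$ is fibrewise dense in $\bar Z$. The difficulty, illustrated by Example~\ref{ex: not fibrewise dense}, is that a naive closed embedding can have $\bar Z_\sigma$ strictly larger than the projective closure of $Z_\sigma$, with the extra components sitting inside $H_\infty$. The plan is to arrange, after passing to a Nisnevich-neighbourhood of $z$, that $X$ itself (hence $Z$) admits a closed embedding into affine space whose projective closure is fibrewise dense; this forces $\bar Z$ to be fibrewise dense in $\bar Z$ as well. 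The key geometric tool is a Noether-normalization statement over a Dedekind base, for which the paper cites \cite[Thm.~4.6]{Kai15}.

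Concretely, I would proceed as follows. First reduce to the case where $X = \Spec(A)$ is affine, connected, smooth over $S$ and fibrewise of pure dimension $n$ (this is the standing hypothesis of Theorem~\ref{thm: gabber presentation}, in whose proof the proposition is invoked). Using the cited Noether normalization over the Dedekind ring $\mathfrak{o}$ --- possibly after shrinking $X$ to a Zariski- or Nisnevich-neighbourhood of $z$ --- find a finite morphism $q\colon X\to \mathbb{A}^n_S$. A finite morphism to $\mathbb{A}^n_S$ is in particular affine and fibrewise-finite, so the composite of $q$ with the standard open embedding $\mathbb{A}^n_S\hookrightarrow \mathbb{P}^n_S$ has the property that the fibrewise closure of the image is the whole of $\mathbb{P}^n$ minus nothing at infinity beyond what is forced, and more importantly $X$ maps finitely (hence with fibres of dimension $0$ over each point of $\mathbb{A}^n_S$, and of the right dimension $n$ fibrewise over $S$). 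Now combine $q$ with any closed embedding $X\hookrightarrow \mathbb{A}^m_S$ of $X$ into some affine space (which exists since $X$ is affine of finite type over $\mathfrak{o}$): the product map
\[
 i_0 = (q, j)\colon X \longrightarrow \mathbb{A}^n_S \times_S \mathbb{A}^m_S = \mathbb{A}^{n+m}_S
\]
is a closed embedding because $j$ already is one. The point of building in the finite coordinate projection $q$ is that it controls the behaviour at infinity: in the projective closure $\bar X\subseteq \mathbb{P}^{n+m}_S$, the projection to $\mathbb{P}^n_S$ extends to a morphism (no indeterminacy) precisely because $q$ is finite, and finiteness of $q$ fibrewise forces $\bar X_\sigma$ to have no components inside the hyperplane at infinity --- any such component would map to a lower-dimensional subscheme of $\mathbb{P}^n_\sigma$, contradicting that $\bar X_\sigma$ is the closure of the $n$-dimensional $X_\sigma$ which surjects finitely onto $\mathbb{A}^n_\sigma$. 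Hence $X$ is fibrewise dense in $\bar X$, and since $Z\hookrightarrow X$ is closed, its closure $\bar Z$ is contained in $\bar X$ and $Z$ is fibrewise dense in $\bar Z$ by the same dimension count applied to each irreducible component of $Z_\sigma$ (here one uses that $Z_\sigma$ is nonempty and, when $z$ lies in the special fibre, that $Z_\sigma\neq X_\sigma$, so the relevant components have dimension $< n$ and still surject finitely onto their images).

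The main obstacle I anticipate is the Noether-normalization step itself: over a Dedekind ring with infinite residue field one can normalize fibrewise, but to get a \emph{single} finite projection $X\to\mathbb{A}^n_S$ that works simultaneously over the generic and the special point one genuinely needs the mixed-characteristic version \cite[Thm.~4.6]{Kai15}, and this is exactly why the hypothesis that $\mathbb{F}$ is infinite (so that enough generic linear projections are defined over $\mathfrak{o}$ itself, not merely over an extension) is used. A secondary technical point is that such a normalization may only exist after shrinking $X$ around $z$ --- but shrinking to a Zariski- or Nisnevich-neighbourhood of $z$ is permitted by the statement ``Nisnevich-locally around $z$'', and it is compatible with the closure operation in the sense that passing to an open $U\subseteq X$ only removes points from $\bar Z$, never adds components at infinity. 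I would also double-check that replacing $X$ by a Nisnevich-neighbourhood does not destroy the fibrewise pure dimension $n$ and the properness of $Z$ inside $X$; both are preserved since étale maps are equidimensional and preserve the codimension of closed subschemes.
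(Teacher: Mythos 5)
There is a genuine gap, and it sits exactly at the step ``$X$ fibrewise dense in $\bar X$, hence $Z$ fibrewise dense in $\bar Z$''. This implication is false, and the paper's own Example~\ref{ex: not fibrewise dense} is a counterexample to it: there $X=\mathbb{A}^1_S$ with the identity embedding (so $q=\mathrm{id}$ is finite, $\bar X=\mathbb{P}^1_S$, and $X$ is certainly fibrewise dense in $\bar X$), yet $Z=V(\pi x_1^2+x_1+1)$ is not fibrewise dense in $\bar Z$, because one generic-fibre point escapes to infinity in the special fibre. Your dimension count cannot detect this: since $Z$ (after the reduction $Z=V(f)$ with $f\neq 0$ mod $\pi$) is $S$-flat, its scheme-theoretic closure $\bar Z$ is again $S$-flat, so \emph{every} component of $\bar Z_\sigma$ — including the spurious ones contained in $H_\infty$ — has the same dimension as the components of $\overline{Z_\sigma}$; no contradiction arises from ``dimension $<n$ and finite onto the image''. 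In other words, controlling the behaviour at infinity of $X$ says nothing about the behaviour at infinity of the closed subscheme $Z$, and it is precisely the closure of $Z$ that the statement is about. A secondary error: the claim that the projection ``extends to a morphism (no indeterminacy) precisely because $q$ is finite'' is also wrong; finiteness of the affine projection does not force $\bar X\cap L_{\underline u}=\emptyset$ (consider $t\mapsto(t,t^2,t^3)$ with projection to the first two coordinates: the affine projection is finite, but the projective closure passes through the centre $[0{:}0{:}0{:}1]$). Lemma~\ref{lem: shafarevich} only gives the converse direction, so even the intermediate assertion about $\bar X_\sigma$ is not established by your argument.

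The paper's proof avoids this by arranging fibrewise density for $Z$ directly: it applies \cite[Thm.~4.6]{Kai15} to $(Z,z)$ (not to $X$), obtaining a Nisnevich-neighbourhood $(Z',z')\to(Z,z)$ and a closed embedding $\bar i_0\colon Z'\hookrightarrow\mathbb{A}^m_S$ with $Z'$ fibrewise dense in its projective closure; it then extends $Z'\to Z$ to a Nisnevich-neighbourhood $X'\to X$ via a standard-smooth presentation, and extends $\bar i_0$ to a closed embedding of $X'$ whose \emph{additional} coordinates are all chosen in the ideal $f\cdot A'$, so they vanish identically on $Z'$. Consequently the closure of $Z'$ in the larger projective space lies in the linear subspace where the new coordinates vanish and coincides with the closure already controlled by Kai's theorem; Lemma~\ref{lem: maps to infinite affine spaces} then cuts the auxiliary coordinates down to finitely many. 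If you want to salvage your strategy, you would have to build the normalization/projection so that it controls the closure of $Z$ itself (e.g.\ make $Z$ finite over an affine space and keep the extra embedding coordinates trivial on $Z$), which is essentially the paper's construction rather than a consequence of density for $X$.
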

\begin{proof}
We need to adapt \cite[Thm.~4.6]{Kai15} to our situation.
Since $z$ lies in the special fibre and $\mathfrak{o}$ is henselian, \loccit\ gives us an affine Nisnevich-neighbourhood $({Z}^\prime,z^\prime) \rightarrow ({Z},z)$ and a closed embedding $\bar{i}_0\colon{Z}^\prime\hookrightarrow \mathbb{A}_{x_1,\dots,x_m,S}$, \st\ ${Z}^\prime$ is fibrewise dense over $S$ inside its closure $\bar{{Z}}^\prime$ in $\mathbb{P}_{x_0:{\dots}:x_m,S}$.
Since both $Z$ and $Z^\prime$ are affine, the underling \'{e}tale morphism $Z^\prime \rightarrow Z$ is standard smooth, \ie, $Z^\prime = {\rm Spec}(B^\prime)$ with ${B}^{\prime} = {B}[t_1,\dots,t_s]/(\bar{g}_1,\dots,\bar{g}_s)$ and invertible Jacobi-determinant ${\rm det}(\{ \frac{\partial\bar{g}_i}{\partial t_j} \}_{i,j}) \in {B}^{{\prime}\times}$. 
\\
We want to extend $({Z}^\prime,z^\prime)$ to a Nisnevich-neighbourhood $({X}^\prime,z^\prime)$ of $({X},z)$:
Since the Jacobi-determinant ${\rm det}(\{ \frac{\partial\bar{g}_i}{\partial t_j} \}_{i,j})$ is invertible in ${B}^{\prime}$, it is non-trivial in ${B}^{\prime} \otimes k(z^\prime)$.
Choose a lift $g_i \in {A}[\underline{t}]$ for each $\bar{g}_i$ and set ${A}^{\prime} := {A}[\underline{t}]/(g_1,\dots,g_s)$ and ${X}^{\prime} = \Spec({A}^{\prime})$.
By construction, ${B}^{\prime} = {A}^{\prime} \otimes_{{A}}{B}$, so $z^\prime$ induces a point (also denoted by) $z^\prime$ in ${X}^{\prime}$.
Since ${\rm det}(\{ \frac{\partial g_i}{\partial t_j} \}_{i,j}) \equiv {\rm det}(\{ \frac{\partial\bar{g}_i}{\partial t_j} \}_{i,j}) \neq 0$ in ${A}^{\prime} \otimes k(z^\prime)$, the Jacobi-determinant ${\rm det}(\{ \frac{\partial g_i}{\partial t_j} \}_{i,j})$ is invertible around $z^\prime$ in ${X}^{\prime}$.
By shrinking ${X}^{\prime}$ without changing ${Z}^{\prime}$ (since the Jacobi-determinant is invertible on the latter), we may assume that $({Z}^{\prime},z^\prime) \rightarrow ({Z},z)$ extends to an affine Nisnevich-neighbourhood $({X}^{\prime},z^\prime) \rightarrow ({X},z)$.
Further, lifting the images of $x_i$ in ${B}^\prime$ to ${A}^\prime$, ${\bar{i}_0}$ extends to a map ${i_0^\prime\colon {X}^\prime \rightarrow \mathbb{A}_{x_1,\dots,x_n,S}}$.
\\
Unfortunately, there is no reason for ${i_0^\prime}$ to be a closed embedding.
To repair this, choose a closed embedding ${X}^\prime \hookrightarrow \mathbb{A}_{y_1,\dots,y_r,S}$ over $S$, \ie, generators $a_j$ of ${A}^\prime$ as an $\mathfrak{o}$-algebra.
Recall that we assumed $B=A/f$ for $f$ in $A$ non-zero in the special fibre $A\otimes_{\mathfrak{o}}\mathbb{F}$ (\cf~proof of Theorem~\ref{thm: gabber presentation}).
Writing $\underline{a}^{\underline{j}} = a_1^{j_1} \cdot {\dots} \cdot a_r^{j_r}$, any element of the ideal $f\cdot {A}^\prime$ is of the form $\sum_{\underline{j}} \lambda_{\underline{j}}\cdot f\underline{a}^{\underline{j}}$ where $\lambda_{\underline{j}} \in \mathfrak{o}$ and $\underline{j}$ runs through a finite subset of $\mathbb{N}^r$.
Mapping $y^{(\underline{j})} \mapsto f\underline{a}^{\underline{j}}$, we get a map ${X}^\prime \rightarrow \mathbb{A}_{\{y^{(\underline{j})} \vert \underline{j}\in \mathbb{N}^r \},S}$ into a copy of the infinite affine space over $S$.
Together with the map $i_0^\prime\colon X^\prime \rightarrow \mathbb{A}_{x_1,\dots,x_n,S}$, we get a \emph{closed} embedding $i_\infty\colon {X}^\prime \hookrightarrow \mathbb{A}_{x_1,\dots,x_m,S} \times_S \mathbb{A}_{\{y^{(\underline{j})} \vert \underline{j}\in \mathbb{N}^r \},S} \cong \mathbb{A}_S^\infty$ into the fibre product:
Indeed, $i_0^\prime\colon \mathfrak{o}[x_1,\dots,x_m] \rightarrow {A}^\prime$ is surjective modulo $f$ and $\mathfrak{o}[y^{(\underline{j})} \vert \underline{j}\in \mathbb{N}^r] \rightarrow {A}^\prime$ has image $\mathfrak{o}[f\cdot{A}^\prime]$ by construction.
By Lemma~\ref{lem: maps to infinite affine spaces}, below, $i_\infty$ induces $i_0\colon {X}^\prime \hookrightarrow \mathbb{A}_{x_1,\dots,x_m,S} \times_S \mathbb{A}_{y^{(\underline{i}_1)},\dots,y^{(\underline{i}_l)},S}$ still a closed embedding for $\underline{j}_1,\dots,\underline{j}_l \in \mathbb{N}^r$ suitable.
Setting $x_{m+s} := y^{(\underline{j}_s)}$ and $N := m+l$, we have constructed a closed embedding $i_0\colon {X}^\prime \hookrightarrow \mathbb{A}_{x_1,\dots,x_N,S}$ such that $i_0\vert_{{Z}^\prime}$ factors over $\bar{i}_0\colon {Z}^\prime \hookrightarrow \mathbb{A}_{x_1,\dots,x_m,S} = V(x_{m+1},\dots,x_N) \subseteq \mathbb{A}_{x_1,\dots,x_N,S}$.
In particular, the closure of ${Z}^\prime$ in $\mathbb{P}_{x_0:{\dots}:x_N,S}$ is just $\bar{{Z}}^\prime$ inside the linear subspace $V_+(x_{m+1},\dots,x_N) \subseteq \mathbb{P}_{x_0:\dots:x_N,S}$, so ${Z}^\prime$ is fibrewise dense over $S$ inside this closure.
\end{proof}

\begin{lemma}\label{lem: maps to infinite affine spaces}
Let ${C}$ be an $\mathfrak{o}$-algebra of finite type.
Let $\iota\colon \Spec({C})\hookrightarrow \mathbb{A}_{t_1,t_2,\dots,S} = \mathbb{A}_S^\infty$ be a closed embedding and let ${\rm pr}_{\leq N}\colon \mathbb{A}_{t_1,t_2,\dots,S} \rightarrow \mathbb{A}_{t_1,\dots,t_N,S} =\mathbb{A}_S^N$ be the canonical projection.
Then ${\rm pr}_{\leq N} \circ \iota$ is a closed embedding for $N\gg 0$.
\end{lemma}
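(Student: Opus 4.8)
The statement is purely a matter of commutative algebra once dualised, so the plan is as follows. Write $A_\infty = \mathfrak{o}[t_1,t_2,\dots]$ and $A_N = \mathfrak{o}[t_1,\dots,t_N]$, so that the closed embedding $\iota$ corresponds to a surjective homomorphism of $\mathfrak{o}$-algebras $\phi\colon A_\infty \twoheadrightarrow C$ and the projection $\mathrm{pr}_{\leq N}$ corresponds to the obvious inclusion $A_N \hookrightarrow A_\infty$. Hence $\mathrm{pr}_{\leq N}\circ\iota$ corresponds to the composite $\psi_N\colon A_N \hookrightarrow A_\infty \xrightarrow{\ \phi\ } C$. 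Since $\Spec(C)$ is affine and a morphism of affine schemes is a closed immersion if and only if the associated ring homomorphism is surjective, it suffices to show that $\psi_N$ is surjective for all $N \gg 0$.

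For this I would invoke the hypothesis that $C$ is of finite type over $\mathfrak{o}$. Choose $\mathfrak{o}$-algebra generators $c_1,\dots,c_k$ of $C$; by surjectivity of $\phi$ there are polynomials $f_1,\dots,f_k \in A_\infty$ with $\phi(f_j) = c_j$ for $1 \leq j \leq k$. Each $f_j$ involves only finitely many of the variables $t_i$, so there exists $N_0$ with $f_1,\dots,f_k \in A_{N_0}$. Then for every $N \geq N_0$ one has $c_j = \psi_N(f_j)$, so the image of $\psi_N$ contains a generating set of $C$ as an $\mathfrak{o}$-algebra and therefore equals $C$. Thus $\psi_N$ is surjective, and $\mathrm{pr}_{\leq N}\circ\iota$ is a closed embedding for every $N \geq N_0$.

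There is no real obstacle to overcome here: the argument is simply the observation that the finitely many algebra generators of $C$ are witnessed by polynomials in finitely many of the coordinates $t_i$. The only non-formal ingredient is the standard equivalence between closed immersions of affine schemes and surjections of the corresponding rings, which reduces the lemma to this finiteness remark.
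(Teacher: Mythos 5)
Your proof is correct and is essentially the same as the paper's: both choose finitely many $\mathfrak{o}$-algebra generators of $C$, lift them to polynomials in $\mathfrak{o}[t_1,t_2,\dots]$, and observe that these lifts involve only finitely many variables, so the restriction to $\mathfrak{o}[t_1,\dots,t_N]$ is already surjective for $N\gg 0$. No differences worth noting.
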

\begin{proof}
Say, as an $\mathfrak{o}$-algebra, ${C}$ is generated by $c_1,\dots,c_r \in {C}$.
Since the corresponding map on algebras $\mathfrak{o}[t_1,t_2,\dots] \twoheadrightarrow {C}$ is surjective, we can find polynomials $f_i \in \mathfrak{o}[t_1,t_2,\dots]$ mapping to $c_i$.
Pick $N\gg 0$ \st~all the $f_i$ lie inside $\mathfrak{o}[t_1,\dots,t_N]$.
Then $\iota$ restricted to $\mathfrak{o}[t_1,\dots,t_N]$ is still surjective, hence the claim.
\end{proof}

\subsection*{Choosing linear projections}
In the next step, we want to find the Zariski-open subset $W\subseteq \mathfrak{E}_n$ parametrizing the linear projections $p_{\underline{u}}$ in Theorem \ref{thm: gabber presentation}. 
To do so, let us first make one further reduction:

\begin{reduction}\label{red: closed point}
By Proposition~\ref{prop: kay density lemma}, there is a Nisnevich-neighbourhood $(X^\prime,z^\prime)\rightarrow(X,z)$ and a closed embedding ${X}^\prime \hookrightarrow \mathbb{A}_{\underline{x},S}$ such that $Z^\prime = Z\times_XX^\prime$ is fibrewise dense in its Zariski-closure $\bar{Z}^\prime$ in $\mathbb{P}_{\underline{x},S}$.
Let $z_0$ be a specialization of $z$ in the image of $X^\prime \rightarrow X$.
We can find a point $z_0^\prime$ in $Z^\prime$ such that $k(z_0^\prime) = k(z_0)$, \ie~$(X^\prime,z_0^\prime) \rightarrow (X,z_0)$ is a Nisnevich neighbourhood, too.
The Nisnevich-localization $(X^\prime,z^\prime)\rightarrow(X,z)$ will be the only non-Zariski-localization in the proof of Theorem~\ref{thm: gabber presentation}.
Thus we may assume that $z$ is a closed point in the following.
Further, from now on we may identify $X^\prime = X$.  
\end{reduction}

The Zariski-open subset $W\subseteq \mathfrak{E}_n$ in Theorem~\ref{thm: gabber presentation} will be provided in the following proposition.

\begin{proposition}\label{prop: gabber A}
Let ${X} = \Spec({A})/S$ be a connected smooth affine $S$-scheme of finite type, fibrewise of pure dimension $n$, $f$ an element in ${A}$ which is non-zero in ${A}\otimes_{\mathfrak{o}}\mathbb{F}$ and ${Z} = \Spec({B}={A}/f) \hookrightarrow {X}$ the closed embedding.
Let $z$ be a closed point in the special fibre of ${Z}$.
Suppose there is a closed embedding $i_0\colon {X}\hookrightarrow \mathbb{A}_{\underline{x},S}$ such that ${Z}$ is fibrewise dense over $S$ inside its closure $\bar{{Z}}$ in $\mathbb{P}_{\underline{x},S}$. 
\\
Then there is a Zariski-open subset $W\subseteq \mathfrak{E}_n$ with $W(\mathfrak{o}) \neq \emptyset$, \st\ for all $\underline{u} \in W(\mathfrak{o})$ the following holds:
\begin{enumerate}
 \item\label{prop: gabber A 1} $p_{(u_1,\dots,u_{n-1})}\vert_{{Z}}\colon {Z} \rightarrow \mathbb{A}_{t_1,\dots,t_{n-1},S}$ is finite,
 \item\label{prop: gabber A 2} $p_{\underline{u}}$ is \'{e}tale at all points of $F= p_{(u_1,\dots,u_{n-1})}^{-1}(p_{(u_1,\dots,u_{n-1})}(z))\cap {Z}$ and
 \item\label{prop: gabber A 3} $p_{\underline{u}}\vert_{F}\colon F\rightarrow p_{\underline{u}}(F)$ is radicial.
\end{enumerate}
\end{proposition}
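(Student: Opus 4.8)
The plan is to mimic the field case of Gabber's lemma as in \cite[Thm.~3.2.2]{CTHK97}, working with the parameter space $\mathfrak{E}_n$ of linear projections and translating each of the three conditions into a non-empty Zariski-open condition on $\mathfrak{E}_n$; since $\mathfrak{E}_n$ is an affine space over $S$ and $\mathbb{F}$ is infinite, any such open subscheme $W$ with $W_\sigma\neq\emptyset$ automatically satisfies $W(\mathfrak{o})\neq\emptyset$ by henselianity (lift an $\mathbb{F}$-point of $W_\sigma$). The hardest of the three is the finiteness statement \eqref{prop: gabber A 1}, so I would treat it first in a separate lemma (referred to above as Lemma~\ref{lem: finitness}): by Lemma~\ref{lem: shafarevich}, $p_{(u_1,\dots,u_{n-1})}|_Z$ is finite as soon as $\bar{Z}\cap L_{(u_1,\dots,u_{n-1})}=\emptyset$, where $L_{(u_1,\dots,u_{n-1})}=V_+(x_0,u_1,\dots,u_{n-1})$. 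The locus of $\underline u$ for which this intersection is empty is open; the crucial point — and the main obstacle — is that its \emph{special fibre} is non-empty. This is exactly where the hypothesis of Proposition~\ref{prop: kay density lemma} enters: because $Z$ is fibrewise dense over $S$ in $\bar Z$, the special fibre $\bar Z_\sigma$ has dimension $\leq n-1$ (it is the projective closure of $Z_\sigma$, which has pure dimension $n-1$), so a generic codimension-$(n-1)$ linear subspace of $H_\infty\cong\PP^{N-1}_{\mathbb{F}}$ misses it — a standard dimension count over the infinite field $\mathbb{F}$ shows the corresponding open subset of $(\mathfrak{E}_n)_\sigma$ is non-empty. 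Without fibrewise density this fails, as Example~\ref{ex: not fibrewise dense} shows.

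Next I would handle \eqref{prop: gabber A 2} and \eqref{prop: gabber A 3}. After shrinking $W$ to the open locus from step one, fix attention on the finite fibre $F=p_{(u_1,\dots,u_{n-1})}^{-1}(p_{(u_1,\dots,u_{n-1})}(z))\cap Z$, which consists of finitely many points, all lying over the closed point $\sigma$ (as $z$ is closed, by Reduction~\ref{red: closed point}). Étaleness of $p_{\underline u}$ at a point $w\in F$ is, since $X$ is smooth of relative dimension $n$ and $p_{\underline u}\colon X\to\mathbb{A}^n_S$ is a map between smooth $S$-schemes of the same relative dimension, equivalent to the differential $d(p_{\underline u})_w$ being an isomorphism on the fibre $X_\sigma$; this translates into the condition that the $n$ linear forms $u_1,\dots,u_n$, restricted modulo the maximal ideal of $w$ and modulo the tangent directions tangent to... more precisely, that the composite of the tangent map $T_w X_\sigma\hookrightarrow T_w\mathbb{A}^N_{\mathbb{F}}\xrightarrow{\ \underline u\ } T_{p_{\underline u}(w)}\mathbb{A}^n_{\mathbb{F}}$ is an isomorphism — an open, and (for infinite $\mathbb{F}$, since $\dim X_\sigma=n$) non-empty, condition on $(\mathfrak{E}_n)_\sigma$ for each of the finitely many $w$. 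Radicial/universal injectivity of $p_{\underline u}|_F$ means the (finitely many) points of $F$ are separated by $p_{\underline u}$ and each has trivial residue-field extension onto its image; separating finitely many closed points by a generic linear form, and killing inseparability of the individual residue extensions by a further generic choice, are again non-empty open conditions over the infinite field $\mathbb{F}$ — this is the usual "general position" argument from \cite[Thm.~3.2.2]{CTHK97} applied fibrewise over $\sigma$, collected in a lemma (Lemma~\ref{lem: etale and radicial}).

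Finally I would assemble the three open subsets: let $W\subseteq\mathfrak{E}_n$ be the intersection of the open locus from step one with the opens from step two for each point of $F$ — but note $F$ itself depends on $\underline u$, so strictly one argues as in \loccit\ by first fixing $W_0$ realizing \eqref{prop: gabber A 1}, observing that over $W_0$ the set $F_{\underline u}$ has constant cardinality (it is the fibre of the finite map $p_{(u_1,\dots,u_{n-1})}|_Z$ over the point $p_{(u_1,\dots,u_{n-1})}(z)$, and $z$ being fixed this varies flatly), then cutting out inside $W_0$ the further open conditions of \eqref{prop: gabber A 2} and \eqref{prop: gabber A 3} uniformly. Each intersection is open, and each special fibre is a non-empty open subset of the $\mathbb{F}$-variety $(\mathfrak{E}_n)_\sigma\cong\mathbb{A}^{nN}_{\mathbb{F}}$, hence a finite intersection is still non-empty on $\sigma$; henselianity of $\mathfrak{o}$ together with smoothness of $\mathfrak{E}_n$ over $S$ then lifts an $\mathbb{F}$-point of $W_\sigma$ to an $\mathfrak{o}$-point, so $W(\mathfrak{o})\neq\emptyset$. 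The main obstacle throughout is controlling the special fibre: everything on the generic fibre is already \cite[Thm.~3.2.2]{CTHK97}, and the new content is precisely ensuring each generic-position condition remains satisfiable after restricting to $\sigma$, which is what Proposition~\ref{prop: kay density lemma} was engineered to guarantee for the finiteness condition and what the infinitude of $\mathbb{F}$ guarantees for the étale/radicial conditions.
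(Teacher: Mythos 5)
Your proposal follows essentially the same route as the paper: Lemma~\ref{lem: shafarevich} together with the fibrewise-density hypothesis gives an open locus with non-empty special fibre for the finiteness claim (the paper cites \cite[Prop.~1.1]{Grayson78} for the dimension count you sketch), the \'etale and radicial conditions are reduced to conditions on the special fibre and handled there by \cite[Lem.~3.4.1, Lem.~3.4.2]{CTHK97} plus descent from $\mathbb{F}^{\rm alg}$, and $W(\mathfrak{o})\neq\emptyset$ follows by lifting an $\mathbb{F}$-point of the non-empty special fibre through the surjective reduction map. One minor caveat: your aside that $F_{\underline{u}}$ has constant cardinality over $W_0$ is neither true in general nor needed --- the cited lemmas of \cite{CTHK97} already produce a non-empty open set of projections for which the conditions hold at all points of the ($\underline{u}$-dependent) set $F_{\underline{u}}$, which is exactly how the paper deals with this dependence.
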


Let us first fix the following notation.

\begin{remark}\label{rem: reduction map}
For ${Y}/S$ a smooth scheme, denote by ${\rm red}\colon {Y}(\mathfrak{o}) \rightarrow {Y}(\mathbb{F}) = {Y}_\sigma(\mathbb{F})$ the reduction map we get by pre-composing with the closed point $\sigma$.
Because $\mathfrak{o}$ is henselian and ${Y}/S$ smooth, this reduction map is always surjective.
\end{remark}

\begin{proof}[Proof of Proposition~\ref{prop: gabber A}]
We divide Proposition~\ref{prop: gabber A} into two parts:
Lemma~\ref{lem: finitness} will provide an open $W_1$ of $\mathfrak{E}_n$ such that $W_1(\mathfrak{o})$ is non-empty and every $\underline{u}$ in $W_1(\mathfrak{o})$ satisfies claim~\eqref{prop: gabber A 1} while Lemma~\ref{lem: etale and radicial} will provide an open $W_2$ such that $W_2(\mathfrak{o})$ is non-empty and every $\underline{u}$ in $W_2(\mathfrak{o})$ satisfies claims~\eqref{prop: gabber A 2} and~\eqref{prop: gabber A 3} in Proposition~\ref{prop: gabber A}.
The intersection $W=W_1\cap W_2$ has all the properties claimed by Proposition~\ref{prop: gabber A}.
For the non-emptiness of $W(\mathfrak{o})$, recall that the reduction map $W(\mathfrak{o}) \twoheadrightarrow W(\mathbb{F})$ is surjective and $W_1(\mathbb{F})  \cap W_2(\mathbb{F})$ is non-empty as the special fibre of $W_1\cap W_2$ is a non-empty open subscheme of an affine space over the infinite field $\mathbb{F}$.
\end{proof}

\begin{lemma}\label{lem: finitness}
Under the assumptions of Proposition~\ref{prop: gabber A}, there is a Zariski-open subset $W_1\subseteq \mathfrak{E}_n$ with $W_1(\mathfrak{o}) \neq \emptyset$, \st\ for all $\underline{u} \in W_1(\mathfrak{o})$ the restriction $p_{(u_1,\dots,u_{n-1})}\vert_{{Z}}\colon {Z} \rightarrow \mathbb{A}_{t_1,\dots,t_{n-1},S}$ is finite.
\end{lemma}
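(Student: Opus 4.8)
The plan is to take for $p$ a linear projection and to apply Lemma~\ref{lem: shafarevich}. Fix the closed embedding $i_0\colon {X}\hookrightarrow\mathbb{A}_{\underline{x},S}$ provided by Proposition~\ref{prop: kay density lemma}, so that ${Z}$ is fibrewise dense over $S$ inside its closure $\bar{{Z}}\subseteq\mathbb{P}_{\underline{x},S}$. By Lemma~\ref{lem: shafarevich} (applied with $Y={Z}$ and $r=n-1$), the restriction $p_{(u_1,\dots,u_{n-1})}\vert_{{Z}}$ is finite whenever $\bar{{Z}}\cap L_{(u_1,\dots,u_{n-1})}=\emptyset$; and since $L_{(u_1,\dots,u_{n-1})}=V_+(x_0,u_1,\dots,u_{n-1})$ lies in the hyperplane at infinity $H_\infty=V_+(x_0)$ while ${Z}\subseteq\mathbb{A}_{\underline{x},S}$ is disjoint from $H_\infty$, this condition is equivalent to $\bar{{Z}}_\infty\cap L_{(u_1,\dots,u_{n-1})}=\emptyset$, where $\bar{{Z}}_\infty:=\bar{{Z}}\cap H_\infty$ is closed in $\mathbb{P}_{\underline{x},S}$ and proper over $S$. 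As $p_{(u_1,\dots,u_{n-1})}$ depends only on the first $n-1$ components of $\underline{u}$, it therefore suffices to produce a Zariski-open $W_1'\subseteq\mathfrak{E}_{n-1}$ with $W_1'(\mathfrak{o})\neq\emptyset$ such that $\bar{{Z}}_\infty\cap L_{\underline{v}}=\emptyset$ for all $\underline{v}\in W_1'(\mathfrak{o})$, and then to set $W_1$ equal to the preimage of $W_1'$ under the projection $\mathfrak{E}_n\to\mathfrak{E}_{n-1}$ forgetting the last component.

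Openness of the good locus is formal. Inside $\mathfrak{E}_{n-1}\times_S\bar{{Z}}_\infty$ I would consider the incidence scheme $\Gamma=\{(\underline{v},\xi)\mid\xi\in L_{\underline{v}}\}$, cut out by the biregular equations $v_1(\xi)=\dots=v_{n-1}(\xi)=0$ and hence closed; its projection $\mathrm{pr}\colon\Gamma\to\mathfrak{E}_{n-1}$ is proper because $\bar{{Z}}_\infty$ is proper over $S$, so $\mathrm{pr}(\Gamma)$ is closed and $W_1':=\mathfrak{E}_{n-1}\setminus\mathrm{pr}(\Gamma)$ is open. The fibre of $\Gamma$ over a point $\underline{v}$ is precisely $\bar{{Z}}_\infty\cap L_{\underline{v}}$ (and likewise after base change), so $\underline{v}\in W_1'$ exactly when this intersection is empty; thus $W_1'$ does the job.

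It then remains to see $W_1'(\mathfrak{o})\neq\emptyset$. Since $\mathfrak{o}$ is henselian and $W_1'$ is smooth over $S$, the reduction map $W_1'(\mathfrak{o})\to W_1'(\mathbb{F})$ is surjective (Remark~\ref{rem: reduction map}), and since the special fibre $(\mathfrak{E}_{n-1})_\sigma\cong\mathbb{A}^{(n-1)N}_{\mathbb{F}}$ is an affine space over the infinite field $\mathbb{F}$, a non-empty Zariski-open on it automatically has an $\mathbb{F}$-point; so it is enough to check that $W_{1,\sigma}'=(\mathfrak{E}_{n-1})_\sigma\setminus\mathrm{pr}(\Gamma_\sigma)$ is non-empty, \ie, that $\mathrm{pr}(\Gamma_\sigma)\subsetneq(\mathfrak{E}_{n-1})_\sigma$. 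As $\mathrm{pr}(\Gamma_\sigma)$ is closed, this will follow from $\dim\Gamma_\sigma<\dim(\mathfrak{E}_{n-1})_\sigma=(n-1)N$. The fibre of $\Gamma_\sigma\to\bar{{Z}}_{\sigma,\infty}$ over a point is a linear subspace of $(\mathfrak{E}_{n-1})_\sigma$ of codimension $n-1$, so $\dim\Gamma_\sigma\le\dim\bar{{Z}}_{\sigma,\infty}+(n-1)(N-1)$, and everything reduces to the estimate $\dim\bar{{Z}}_{\sigma,\infty}\le n-2$. The analogous estimate on the generic fibre is easy: $X_\eta$ is integral as an open subscheme of the integral scheme $X$, so $f$ is a non-zero-divisor there and $\dim{Z}_\eta=n-1$, whence $\dim(\bar{{Z}}_\eta\cap H_\infty)\le n-2$.

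The estimate $\dim\bar{{Z}}_{\sigma,\infty}\le n-2$ is the heart of the matter, and the step I expect to be the main obstacle. It rests on two points. First, the fibrewise density from Proposition~\ref{prop: kay density lemma} identifies $\bar{{Z}}_\sigma$ with the closure of ${Z}_\sigma$ inside $\mathbb{P}_{\underline{x},\mathbb{F}}$ — this is exactly what excludes the pathology of Example~\ref{ex: not fibrewise dense}, where the special fibre of the total closure is strictly larger than the closure of the special fibre. Consequently every irreducible component of $\bar{{Z}}_\sigma$ meets the affine scheme ${Z}_\sigma$ in a dense open, hence is not contained in $H_\infty$, hence meets $H_\infty$ in a subset of strictly smaller dimension. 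Second, one needs $\dim{Z}_\sigma\le n-1$, \ie\ that ${Z}_\sigma=V(\bar f)$ (with $\bar f\neq 0$) is a proper hypersurface in the equidimensional $X_\sigma$; this is where the hypothesis ${Z}_\sigma\neq X_\sigma$ enters, forbidding ${Z}$ from containing an entire component of $X_\sigma$ — after, if necessary, discarding the components of the smooth $\mathbb{F}$-scheme $X_\sigma$ away from $z$, which are open and closed in it. Combining the two points gives $\dim\bar{{Z}}_{\sigma,\infty}\le(n-1)-1=n-2$, and hence the whole of Lemma~\ref{lem: finitness}.
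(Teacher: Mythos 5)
Your proof is correct and follows the same route as the paper: the same incidence scheme inside $\mathfrak{E}\times_S\bar Z_\infty$ whose proper projection yields the open locus $W_1$, the same appeal to Lemma~\ref{lem: shafarevich}, and the same reduction of non-emptiness to the special fibre via surjectivity of ${\rm red}\colon W_1(\mathfrak{o})\twoheadrightarrow W_1(\mathbb{F})$ and the infinitude of $\mathbb{F}$ (working in $\mathfrak{E}_{n-1}$ rather than $\mathfrak{E}_n$ is immaterial). The one point of divergence is that, for the special fibre, the paper stops at ``$\bar Z_{\infty,\sigma}=\overline{Z_\sigma}\cap H_{\infty,\sigma}$, so we are in the situation of Grayson's Proposition~1.1,'' whereas you carry out the underlying dimension count $\dim\Gamma_\sigma\le\dim\bar Z_{\sigma,\infty}+(n-1)(N-1)<(n-1)N$ yourself; this buys nothing new but does surface the input $\dim Z_\sigma\le n-1$ that the citation leaves implicit. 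Your discussion of that input is the right one: it is exactly where $Z_\sigma\neq X_\sigma$ enters, and since $\bar f\neq 0$ in $A\otimes_{\mathfrak{o}}\mathbb{F}$ only forbids $\bar f$ from vanishing on \emph{all} of the (regular, hence disjoint and integral) components of $X_\sigma$, one should first pass to the Zariski-open neighbourhood $X\setminus C$ of $z$, with $C$ the union of the components of $X_\sigma$ not containing $z$ (closed in $X$), so that $X_\sigma$ becomes irreducible and $\bar f$ a non-zero-divisor; this localization is harmless since the enclosing statements are all local around $z$. The generic-fibre estimate you include is not needed for the argument, as you only require one $\mathbb{F}$-point of $W_{1,\sigma}$ to lift.
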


\begin{lemma}\label{lem: etale and radicial}
Under the assumptions of Proposition~\ref{prop: gabber A}, there is a Zariski-open subset $W_2\subseteq \mathfrak{E}_n$ with $W_2(\mathfrak{o}) \neq \emptyset$, \st\ $p_{\underline{u}}$ is \'{e}tale at all points of $F$ and $p_{\underline{u}}\vert_{F}\colon F\rightarrow p_{\underline{u}}(F)$ is radicial for all $\underline{u} \in W_2(\mathfrak{o})$.
\end{lemma}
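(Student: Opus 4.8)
The plan is to reduce the whole statement to the special fibre $X_\sigma$ over the infinite field $\mathbb{F}$, where it is the \'etale-and-radicial part of the classical Gabber presentation lemma \cite[Thm.~3.2.2]{CTHK97}, and then to transport a non-empty Zariski-open locus of linear projections back from $\mathbb{F}$ to $\mathfrak{o}$ using that $\mathfrak{o}$ is henselian with infinite residue field.

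First I would check that both conditions under consideration are detected on the special fibre and depend only on the reduction $\bar u\in\mathfrak{E}_n(\mathbb{F})$ of $\underline u$. Since $z$ lies in $Z_\sigma$, the point $p_{(u_1,\dots,u_{n-1})}(z)$ lies in the special fibre of $\mathbb{A}_{t_1,\dots,t_{n-1},S}$, so $F=p_{(u_1,\dots,u_{n-1})}^{-1}(p_{(u_1,\dots,u_{n-1})}(z))\cap Z$ is a closed subscheme of $Z_\sigma$; moreover $p_{(u_1,\dots,u_{n-1})}$ restricts on $X_\sigma$ to the linear projection associated with $\bar u$, so $F$ coincides with the analogous fibre $F_{\bar u}$ formed inside $X_\sigma$. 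Hence ``$p_{\underline u}\vert_F$ is radicial'' is a statement about the morphism of $\mathbb{F}$-schemes $p_{\bar u}\vert_{F_{\bar u}}\colon F_{\bar u}\to\mathbb{A}^n_{\mathbb{F}}$. For \'etaleness at a point $x\in F$, the schemes $X$ and $\mathbb{A}^n_S$ are both smooth of finite type over $S$ and $p_{\underline u}$ is an $S$-morphism, so applying Nakayama to $p_{\underline u}^{\ast}\Omega_{\mathbb{A}^n_S/S}\to\Omega_{X/S}$ and the fibrewise criterion of flatness shows that $p_{\underline u}$ is \'etale at $x$ if and only if $p_{\bar u}\colon X_\sigma\to\mathbb{A}^n_{\mathbb{F}}$ is \'etale at $x$. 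It therefore suffices to produce a non-empty Zariski-open $W_{2,\sigma}\subseteq(\mathfrak{E}_n)_\sigma$ such that for every field-valued point $\bar u$ of $W_{2,\sigma}$ the projection $p_{\bar u}\colon X_\sigma\to\mathbb{A}^n_{\mathbb{F}}$ is \'etale along $F_{\bar u}$ and $p_{\bar u}\vert_{F_{\bar u}}$ is radicial.

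This last assertion is provided by Gabber's lemma over the infinite field $\mathbb{F}$: restricting $i_0$ gives a closed embedding $X_\sigma\hookrightarrow\mathbb{A}^N_{\mathbb{F}}$, and $Z_\sigma=V(\bar f)\subsetneq X_\sigma$ is a divisor through the closed point $z$, so the constructions in the proof of \cite[Thm.~3.2.2]{CTHK97} apply verbatim. Concretely, the \'etale locus of the universal projection $X_\sigma\times_{\mathbb{F}}(\mathfrak{E}_n)_\sigma\to\mathbb{A}^n_{\mathbb{F}}\times_{\mathbb{F}}(\mathfrak{E}_n)_\sigma$ is open, the incidence scheme $\mathcal{T}=\{(x,\bar u)\colon x\in Z_\sigma,\ p_{(\bar u_1,\dots,\bar u_{n-1})}(x)=p_{(\bar u_1,\dots,\bar u_{n-1})}(z)\}$ is closed in $Z_\sigma\times_{\mathbb{F}}(\mathfrak{E}_n)_\sigma$, and a dimension count (using that over the locus where $p_{(u_1,\dots,u_{n-1})}\vert_{Z_\sigma}$ is finite the map $\mathcal{T}\to(\mathfrak{E}_n)_\sigma$ is quasi-finite) shows that the locus of ``bad'' pairs---where $p_{\bar u}$ fails to be \'etale at $x$, together with the analogous locus in $Z_\sigma\times_{\mathbb{F}}Z_\sigma\times_{\mathbb{F}}(\mathfrak{E}_n)_\sigma$ witnessing a failure of injectivity or of the residue-field condition on $F_{\bar u}$---does not dominate $(\mathfrak{E}_n)_\sigma$; here one uses that $(\mathfrak{E}_n)_\sigma$ is an irreducible affine space, that a generic linear form restricts to a non-degenerate functional on a fixed tangent space and separates finitely many closed points, and that $\mathbb{F}$ is infinite. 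Let $W_{2,\sigma}$ be the complement of the closure of the image of this bad locus, intersected with the finiteness locus; it is open and non-empty. Finally choose any open $W_2\subseteq\mathfrak{E}_n$ with $(W_2)_\sigma=W_{2,\sigma}$. As $W_{2,\sigma}$ is a non-empty open subscheme of an affine space over the infinite field $\mathbb{F}$, it has an $\mathbb{F}$-point, and the reduction map $W_2(\mathfrak{o})\twoheadrightarrow W_2(\mathbb{F})$ of Remark~\ref{rem: reduction map} is surjective, so $W_2(\mathfrak{o})\neq\emptyset$; moreover every $\underline u\in W_2(\mathfrak{o})$ reduces to a point $\bar u\in W_{2,\sigma}(\mathbb{F})$, so the special-fibre assertions hold for $\bar u$, and hence by the first step the required assertions hold for $\underline u$ over $S$.

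I expect the main obstacle to be the extraction of a uniform non-empty open locus $W_{2,\sigma}$ from \cite{CTHK97}, whose conclusions are typically stated for a single sufficiently general projection: this requires carefully organising the universal \'etale and incidence loci over $(\mathfrak{E}_n)_\sigma$ and a Chevalley-type argument bounding the image of the bad set, as well as treating the residue-field part of radicialness. The reduction-to-the-special-fibre step should be routine, but one must be precise that \'etaleness of $p_{\underline u}$ at the special-fibre points of $F$ really is equivalent to \'etaleness of $p_{\bar u}$ on $X_\sigma$, using smoothness of $X$ and $\mathbb{A}^n_S$ over $S$.
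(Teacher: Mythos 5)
Your proposal is correct and follows essentially the same route as the paper: reduce both conditions to the special fibre (noting $F\subseteq Z_\sigma$ and $p_{\underline{u}}\vert_F = p_{\bar{u}}\vert_F$), produce a non-empty open locus of good projections in $\mathfrak{E}_{n,\sigma}$ over the infinite residue field, lift it to any open $W_2\subseteq\mathfrak{E}_n$ with that special fibre, and conclude by surjectivity of the reduction map $W_2(\mathfrak{o})\twoheadrightarrow W_2(\mathbb{F})$. The only differences are minor: the paper checks the special-fibre reduction for \'{e}taleness by an explicit Jacobian/conormal-sequence computation (rather than your fibrewise flatness criterion), precisely so that the condition becomes $d\bar{u}_1\wedge\dots\wedge d\bar{u}_n\neq 0$ in $\Omega^n_{X_\sigma/\mathbb{F}}\otimes_{\mathcal{O}_{X_\sigma}} k(x)$, and it then simply cites \cite[Lem.~3.4.1, Lem.~3.4.2]{CTHK97} over $\mathbb{F}^{\rm alg}$ together with the standard descent argument of \cite[Lem.~3.4.3]{CTHK97} to obtain the open subset over $\mathbb{F}$, instead of re-deriving the incidence-locus and dimension-count argument you sketch (which is exactly the content of those lemmas).
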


\begin{proof}[Proof of Lemma~\ref{lem: finitness}]
This is just a version of the arguments leading to \cite[Prop.~1.1]{Grayson78}:
Recall that the $j^{\rm th}$-factor $\mathbb{A}_{\underline{x},S}^{\vee,(j)}$ of $\mathfrak{E}_n$ is $\mathbb{A}_{x_{1,j}^\ast,\dots,x_{N,j}^\ast,S}$, \ie, $\mathfrak{E}_n = \mathbb{A}_{\{x_{i,j}^\ast \vert 1\leq i \leq N, 1\leq j \leq n\},S}$.
Define
\begin{center}
 $\mathbb{L}:= V_+(x_0,\sum_jx_{i,j}^\ast\otimes x_j \vert 1\leq i <n) \subseteq \mathfrak{E}_n \times_S H_\infty$ and $\bar{{Z}}_\infty := \bar{{Z}}\cap H_\infty$.
\end{center}
Here, $H_\infty=V_+(x_0)\subset\mathbb{P}_{\underline{x},S}$ is the hyperplane at infinity.
By construction, $\mathbb{L}\rightarrow \mathfrak{E}_n$ has fibre $\mathbb{L}_{\underline{u}} = L_{(u_1,\dots,u_{n-1})}$ over $\underline{u}\in \mathfrak{E}_n(\mathfrak{o})$.
Since the projection ${\rm pr}\colon \mathfrak{E}_n \times_S H_\infty \rightarrow \mathfrak{E}_n$ is projective, hence closed,
\begin{equation*}
 W_1:= \mathfrak{E}_n \setminus {\rm pr}(\mathbb{L}\cap (\mathfrak{E}_n\times_S \bar{{Z}}_\infty))
\end{equation*}
is open.
Again by construction, for any $\underline{u}\in W_1(\mathfrak{o})$, $L_{(u_1,\dots,u_{n-1})} \cap \bar{{Z}} = \emptyset$, so $p_{(u_1,\dots,u_{n-1})}\vert_{{Z}}\colon {Z} \rightarrow \mathbb{A}_{t_1,\dots,t_{n-1},S}$ is finite by Lemma~\ref{lem: shafarevich}.
\\
It remains to show that $W_1(\mathfrak{o})\neq \emptyset$:
The reduction map ${\rm red}\colon W_1(\mathfrak{o}) \twoheadrightarrow W_1(\mathbb{F})$ is surjective, so we have to show $W_1(\mathbb{F}) = W_{1,\sigma}(\mathbb{F}) \neq \emptyset$.
The special fibre $W_{1,\sigma}$ equals $\mathfrak{E}_{n,\sigma} \setminus {\rm pr}(\mathbb{L}_\sigma\cap (\mathfrak{E}_{n,\sigma}\times_{\mathbb{F}} \bar{{Z}}_{\infty,\sigma}))$.
Further, ${Z}_\sigma \subset \bar{{Z}}_\sigma$ is dense by assumption so $\bar{{Z}}_\sigma$ is the closure $\overline{{Z}_\sigma}$ of ${Z}_\sigma$ inside $\mathbb{P}_{\underline{x},\mathbb{F}}$.
It follows that $\bar{{Z}}_{\infty,\sigma} = \overline{{Z}_\sigma} \cap H_{\infty,\sigma}$, \ie, we are in the situation of \cite[Prop.~1.1]{Grayson78} and $W_{1,\sigma}(\mathbb{F}) \neq \emptyset$.
\end{proof}

Lemma~\ref{lem: etale and radicial} can easily be derived from \cite[Lem.~3.4.1, Lem.~3.4.2]{CTHK97} applied over the special fibre:

\begin{proof}[Proof of Lemma~\ref{lem: etale and radicial}]
As a closed embedding of smooth $S$-schemes, $i_0\colon {X} \hookrightarrow \mathbb{A}_{\underline{x},S}$ is regular.
Let $I=(f_1,\dots,f_{N-n})\unlhd \mathfrak{o}[\underline{x}]$ be the ideal of $i_0$ for $f_1,\dots,f_{N-n}$ a regular sequence.
Write $A=\mathcal{O}(X)$ over $\mathfrak{o}[\underline{t}]$ (via $p_{\underline{u}}$) as $\mathfrak{o}[\underline{t}][\underline{x}]/(f_i,u_j-t_j\vert i,j)$.
Then $p_{\underline{u}}$ is \'{e}tale at a point $x \in X$, if it is standard smooth around $x$, \ie, if the Jacobi-determinant
\begin{equation*}
{\rm det}\left( \left\{\frac{\partial f_i}{\partial x_s}\right\}_{i,s} \left\vert \left\{\frac{\partial (u_j - t_j)}{\partial x_s}\right\}_{j,s} \right)\right. = {\rm det}\left( \left\{\frac{\partial f_i}{\partial x_s}\right\}_{i,s} \left\vert \left\{\frac{\partial (u_j)}{\partial x_s}\right\}_{j,s} \right)\right.
\end{equation*}
is invertible in $\mathcal{O}_{X,x}$.
We may write the latter determinant as $df_1\wedge \dots df_{N-n}\wedge du_1\wedge \dots \wedge du_n$ in $\Omega_{\mathfrak{o}[\underline{x}]/\mathfrak{o}}^{N}\otimes_{\mathfrak{o}[\underline{x}]}\mathcal{O}_{X,x}$.
Since $X\hookrightarrow \mathbb{A}_{\underline{x},S}$ is a smooth pair, the conormal sequence
\begin{equation*}
 0\rightarrow
 I/I^2 \otimes_A \mathcal{O}_{X,x} \rightarrow
 \Omega_{\mathfrak{o}[\underline{x}]/\mathfrak{o}}^{1}\otimes_{\mathfrak{o}[\underline{x}]}\mathcal{O}_{X,x} \rightarrow
 \Omega_{A/\mathfrak{o}}^{1}\otimes_{A}\mathcal{O}_{X,x} \rightarrow
 0
\end{equation*}
is split exact and $\Omega_{\mathfrak{o}[\underline{x}]/\mathfrak{o}}^{N}\otimes_{\mathfrak{o}[\underline{x}]}\mathcal{O}_{X,x} = \bigwedge^{N-n}(I/I^2 \otimes_A \mathcal{O}_{X,x})\otimes_{\mathcal{O}_{X,x}} (\Omega_{A/\mathfrak{o}}^{n}\otimes_{A}\mathcal{O}_{X,x})$.
Note that $I/I^2$ is free over $A$ with basis given by the regular sequence $f_1,\dots,f_{N-n}$.
In particular, $f_1\wedge \dots \wedge f_n$ is invertible in $\bigwedge^{N-n}(I/I^2 \otimes_A \mathcal{O}_{X,x})=\mathcal{O}_{X,x}$ and
\begin{equation*}
 df_1\wedge \dots df_{N-n}\wedge du_1\wedge \dots \wedge du_n = (f_1\wedge \dots \wedge f_n)\otimes (d\bar{u}_1\wedge \dots \wedge d\bar{u}_n)
\end{equation*}
is invertible if and only if $d\bar{u}_1\wedge \dots \wedge d\bar{u}_n$ is invertible in $\Omega_{A/\mathfrak{o}}^{n}\otimes_{A}\mathcal{O}_{X,x} = \mathcal{O}_{X,x}$ for $\bar{u}_j$ the image of $t_j$ under $\mathfrak{o}[\underline{t}]\rightarrow A$.
By Nakayama, this is equivalent to $d\bar{u}_1\wedge\dots\wedge d\bar{u}_n \neq 0$ in $\Omega_{{X}/S}^n \otimes_{\mathcal{O}_{{X}}} k(x)$.
Suppose, $x$ is contained in $F$.
Since $z$ lies in the special fibre, so does $x$ and $\Omega_{{X}/S}^n \otimes_{\mathcal{O}_{{X}}} k(x) = \Omega_{{X}_\sigma/\mathbb{F}}^n \otimes_{\mathcal{O}_{{X}_\sigma}} k(x)$.
Summing up, $p_{\underline{u}}$ is \'{e}tale at $x\in F$ if $d\bar{u}_1\wedge\dots\wedge d\bar{u}_n \neq 0$ in $\Omega_{{X}_\sigma/\mathbb{F}}^n \otimes_{\mathcal{O}_{{X}_\sigma}} k(x)$.
Thus, we are in fact in the situation of \cite[Lem.~3.4.1]{CTHK97}, \ie, we get a non-empty open subset $\bar{W}_2^\prime \subseteq \mathfrak{E}_{n,\sigma} \otimes_{\mathbb{F}}\mathbb{F}^{\rm alg}$ with $p_{\underline{u}}$ \'{e}tale around $F$ for all $\underline{u}$ with ${\rm red}(\underline{u}) \in \bar{W}_2^\prime(\mathbb{F}^{\rm alg})$.
Here, $\mathbb{F}^{\rm alg}/\mathbb{F}$ is an algebraic closure.
\\
For the universal injectivity, observe that $p_{\underline{u}}\vert_F = p_{{\rm red}(\underline{u})}\vert_F$.
Thus, we are in the situation of \cite[Lem.~3.4.2]{CTHK97}, \ie, we get a non-empty open subset $\bar{W}_2^{\prime\prime} \subseteq \mathfrak{E}_{n,\sigma} \otimes_{\mathbb{F}}\mathbb{F}^{\rm alg}$ with $p_{\underline{u}}\vert_{F\otimes_{\mathbb{F}}\mathbb{F}^{\rm alg}}$ (universally) injective for all $\underline{u}$ with ${\rm red}(\underline{u}) \in \bar{W}_2^{\prime\prime}(\mathbb{F}^{\rm alg})$.
\\
Finally, by a standard descent argument (\cf~the proof of \cite[Lem.~3.4.3]{CTHK97}) for the intersection $\bar{W}_2^{\prime} \cap \bar{W}_2^{\prime\prime}$, we get an open subset $\bar{W}_2 \subseteq \mathfrak{E}_{n,\sigma}$ with $\bar{W}_2(\mathbb{F})\neq \emptyset$, such that $p_{\underline{u}}$ is \'{e}tale at all points of $F$ and $p_{\underline{u}}\vert_{F}$ is universally injective for all $\underline{u}$ with ${\rm red}(\underline{u}) \in \bar{W}_2(\mathbb{F})$.  
Let $W_2 \subseteq \mathfrak{E}_n$ be any open subset with special fibre $W_{2,\sigma} = \bar{W}_2$.
Then $\underline{u} \in W_2(\mathfrak{o})$ if and only if ${\rm red}(\underline{u}) \in \bar{W}_2(\mathbb{F})$ and $W_2(\mathfrak{o}) \neq \emptyset$, since the reduction map is surjective.
\end{proof}

\subsection*{Choosing neighbourhoods}
Fix a linear projection $p_{\underline{u}}$ for an $\mathfrak{o}$-point $\underline{u}$ in the open subset $W\subseteq \mathfrak{E}_n$ provided by Proposition~\ref{prop: gabber A}.
In the following, we will construct the open neighbourhoods $V$ and $U$ in Theorem~\ref{thm: gabber presentation}.

As in~\cite{CTHK97}, we will first secure $V \subseteq \mathbb{A}_{t_1,\dots,t_{n-1},S}$ in Lemma~\ref{lem: securing V} and an open neighbourhood $z\in U_1\subseteq p_{(u_1,\dots,u_{n-1})}^{-1}(V)$ covering Theorem~\ref{thm: gabber presentation} parts \eqref{maintheoremtwo} and \eqref{maintheoremfour} in Lemma~\ref{lem: securing U}.
If we define $U$ as the intersection of $U_1$ with the \'{e}tale locus of $p_{\underline{u}}$, the pair $V$ and $U$ will finally satisfy claims \eqref{maintheoremtwo}, \eqref{maintheoremthree} and \eqref{maintheoremfour} of Theorem~\ref{thm: gabber presentation}.
The proofs can almost literally be transferred from~\cite{CTHK97}. 

\begin{lemma}\label{lem: securing V} \emph{(\cf\ \cite[Lem.~3.5.1]{CTHK97})}
Under the assumptions of Proposition~\ref{prop: gabber A} and any choice of linear projection $\underline{u}$ in $W(\mathfrak{o})$, there is a Zariski-open neighbourhood $V \subseteq \mathbb{A}_{t_1,\dots,t_{n-1},S}$ of $p_{(u_1,\dots,u_{n-1})}(z)$ such that $p_{\underline{u}}$ is \'{e}tale in a Zariski-open neighbourhood of ${Z} \cap p_{(u_1,\dots,u_{n-1})}^{-1}(V)$ and restricts to a closed embedding $ {Z} \cap p_{(u_1,\dots,u_{n-1})}^{-1}(V) \hookrightarrow \mathbb{A}_V^1$.
\end{lemma}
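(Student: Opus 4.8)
The plan is to transfer the argument of \cite[Lem.~3.5.1]{CTHK97} to our base. Abbreviate $q := p_{(u_1,\dots,u_{n-1})}$, so that $q = \mathrm{pr}\circ p_{\underline u}$ for the projection $\mathrm{pr}\colon\mathbb{A}^n_S\to\mathbb{A}^{n-1}_S$, and recall from Proposition~\ref{prop: gabber A}.\eqref{prop: gabber A 1} that $q\vert_Z\colon Z\to\mathbb{A}^{n-1}_S$ is finite. Put $y_0 := q(z)$, so that $F = (q\vert_Z)^{-1}(y_0)$ is the fibre of $Z$ over $y_0$. The recurring device is that finite morphisms are closed, so that a closed subscheme of a scheme finite over $\mathbb{A}^{n-1}_S$ which misses the fibre over $y_0$ has closed image avoiding $y_0$, and removing this image is a permissible shrinking of the base. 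I would secure the \'etale condition first and then, over the resulting open, the closed-embedding condition; the order is forced, as the second step needs $p_{\underline u}$ unramified near $Z$.

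\emph{The \'etale locus.} Let $E\subseteq X$ be the open locus on which $p_{\underline u}$ is \'etale; by Proposition~\ref{prop: gabber A}.\eqref{prop: gabber A 2} one has $F\subseteq E$. Endowing $Z\setminus E$ with the reduced structure makes it a closed subscheme of $Z$, hence finite over $\mathbb{A}^{n-1}_S$; it is disjoint from $F$, so $y_0\notin q(Z\setminus E)$, and $V_1 := \mathbb{A}^{n-1}_S\setminus q(Z\setminus E)$ is an open neighbourhood of $y_0$ with $Z\cap q^{-1}(V_1)\subseteq E$. For any open $V\subseteq V_1$ the open subscheme $E\cap q^{-1}(V)$ of $X$ then contains $Z\cap q^{-1}(V)$ and $p_{\underline u}$ is \'etale on it.

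\emph{The closed embedding.} Write $Z_1 := Z\cap q^{-1}(V_1)$. As $q\vert_{Z_1}$ is finite and $\mathbb{A}_{V_1}^1\to V_1$ is separated, $p_{\underline u}\vert_{Z_1}\colon Z_1\to\mathbb{A}_{V_1}^1$ is proper by cancellation and quasi-finite, hence finite; it is unramified because $Z_1\subseteq E$. Therefore its diagonal is simultaneously an open immersion (unramifiedness) and a closed immersion (separatedness), so $\Delta(Z_1)$ is clopen in $Q := Z_1\times_{\mathbb{A}_{V_1}^1}Z_1$; let $R$ be the clopen complement. Since $Z_1$ is finite over $V_1$, so is $Q$, and hence so is $R$. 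The fibre of $R$ over $y_0$ is the complement of the diagonal in $F\times_{\mathbb{A}^1_{k(y_0)}}F$, which is empty because $p_{\underline u}\vert_F$ is radicial by Proposition~\ref{prop: gabber A}.\eqref{prop: gabber A 3} — recall that a morphism is universally injective (\ie\ radicial) precisely when its diagonal is surjective. Thus $y_0\notin\rho(R)$ for $\rho\colon R\to V_1$ the (finite, hence closed) structure map, and $V := V_1\setminus\rho(R)$ is an open neighbourhood of $y_0$. Base change to $V$ kills $R$, so over $Z\cap q^{-1}(V)$ the morphism $p_{\underline u}$ is finite and unramified with surjective diagonal, \ie\ a finite monomorphism, which is a closed immersion $Z\cap q^{-1}(V)\hookrightarrow\mathbb{A}_V^1$; since $V\subseteq V_1$, the \'etale conclusion from the first step applies to this $V$ as well.

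The point needing the most care is the clopen-ness of the diagonal of $Z_1\to\mathbb{A}_{V_1}^1$: this is what turns the off-diagonal locus $R$ into a closed subscheme that is moreover finite over the base — the latter because $Z$ is finite over $\mathbb{A}^{n-1}_S$ rather than merely over $\mathbb{A}^n_S$ — so that its image may legitimately be removed, and it is also why the unramified reduction must precede it. The remaining inputs (finiteness of $p_{\underline u}\vert_Z$ by the cancellation property, and the implications ``unramified $+$ radicial $\Rightarrow$ monomorphism'' and ``finite $+$ monomorphism $\Rightarrow$ closed immersion'') are standard.
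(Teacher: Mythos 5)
Your proof is correct, and its first half (the construction of $V_1$ by removing the finite, hence closed, image $p_{(u_1,\dots,u_{n-1})}(Z\setminus E)$ of the non-\'etale locus) is exactly the paper's construction. For the closed-embedding half you take a genuinely different route: the paper regards $p_{\underline u}\vert_Z\colon Z\to\mathbb{A}^1_{\mathbb{A}^{n-1}_S}$ as a finite family over $\mathbb{A}^{n-1}_S$ and invokes the general semicontinuity principle that, for such a family, ``is a closed embedding'' is an open condition on the base (Nakayama applied to the cokernel of $\mathcal{O}_{\mathbb{A}^1}\to (p_{\underline u}\vert_Z)_*\mathcal{O}_Z$, which is coherent and finite over the base), and then verifies the single fibre over $p_{(u_1,\dots,u_{n-1})}(z)$ using that $p_{\underline u}\vert_F$ is radicial and unramified. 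You instead reprove the needed openness by hand in this special case: unramifiedness plus separatedness make the diagonal of the finite morphism $Z_1\to\mathbb{A}^1_{V_1}$ clopen, the off-diagonal component $R$ is finite over $V_1$ (this is where finiteness of $Z$ over $\mathbb{A}^{n-1}_S$ rather than over $\mathbb{A}^n_S$ enters, as you note), radiciality on $F$ empties its fibre over $p_{(u_1,\dots,u_{n-1})}(z)$, and removing $\rho(R)$ yields a finite monomorphism, hence a closed immersion. Both arguments rest on the same three inputs from Proposition~\ref{prop: gabber A}; the trade-off is that your diagonal argument avoids the coherent-module/Nakayama lemma but forces the two shrinkings to be performed sequentially (you need $Z_1$ inside the \'etale locus before forming $Q$ and $R$), whereas in the paper $V_1$ and $V_2$ are constructed independently (unramifiedness is used there only in the fibre check, not for the openness) and one takes $V=V_1\cap V_2$.
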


\begin{proof}
We will get $V$ as $V_1\cap V_2$, where $V_1 \subseteq \mathbb{A}_{t_1,\dots,t_{n-1},S}$ is an open neighbourhood of $p_{(u_1,\dots,u_{n-1})}(z)$ \st\ $p_{\underline{u}}$ is \'{e}tale at all points of ${Z} \cap p_{(u_1,\dots,u_{n-1})}^{-1}(V_1)$ and $V_2 \subseteq \mathbb{A}_{t_1,\dots,t_{n-1},S}$ is an open neighbourhood such that $p_{\underline{u}}$ restricts to a closed embedding ${Z} \cap p_{(u_1,\dots,u_{n-1})}^{-1}(V_2) \rightarrow \mathbb{A}_{V_2}^1$.
\\
Let $U^\prime\subseteq {X}$ be the \'{e}tale locus of $p_{\underline{u}}$.
Since $\underline{u}\in W(\mathfrak{o})$, $U^\prime$ is an open neighbourhood of $F= p_{(u_1,\dots,u_{n-1})}^{-1}(p_{(u_1,\dots,u_{n-1})}(z))\cap {Z}$ in ${X}$ (\cf~Proposition~\ref{prop: gabber A}).
As $p_{(u_1,\dots,u_{n-1})}\vert_{{Z}}$ is finite, $p_{(u_1,\dots,u_{n-1})}({Z}\setminus U^\prime)$ is closed and we set
\[
V_1:= \mathbb{A}_{t_1,\dots,t_{n-1},S} \setminus p_{(u_1,\dots,u_{n-1})}({Z}\setminus U^\prime). 
\]
By construction, $V_1$ is a Zariski-open neighbourhood of the image $p_{(u_1,\dots,u_{n-1})}(z)$ of $z$ and moreover ${Z} \cap p_{(u_1,\dots,u_{n-1})}^{-1}(V_1) \subseteq {Z} \cap U^\prime$ is contained in the \'{e}tale locus $U^\prime$ of $p_{\underline{u}}$.
\\
To get the neighbourhood $V_2$, consider $p_{\underline{u}}\vert_{{Z}}\colon {Z} \rightarrow \mathbb{A}_{\underline{t},S} = \mathbb{A}_{t_n,\mathbb{A}_{t_1,\dots,t_{n-1},S}}$ as a family of maps over $\mathbb{A}_{t_1,\dots,t_{n-1},S}$.
Since ${Z}/\mathbb{A}_{t_1,\dots,t_{n-1},S}$ is finite, the property ``$p_{\underline{u}}\vert_{{Z}}$ is a closed embedding'' is Zariski-open in the base $\mathbb{A}_{t_1,\dots,t_{n-1},S}$ by Nakayama.
Thus we have to show that the fibre of this family
\[
p_{\underline{u}}\vert_{F}\colon {Z} \cap p_{(u_1,\dots,u_{n-1})}^{-1}(p_{(u_1,\dots,u_{n-1})}(z)) = F \rightarrow \mathbb{A}_{t_n,p_{(u_1,\dots,u_{n-1})}(z)} 
\]
over $p_{(u_1,\dots,u_{n-1})}(z)$ is a closed embedding.
But $p_{\underline{u}}(F) \subset \mathbb{A}_{t_n,p_{(u_1,\dots,u_{n-1})}(z)}$ is closed as a finite set of closed points and $p_{\underline{u}}\vert_F \colon F \rightarrow p_{\underline{u}}(F)$ is a closed embedding as by Proposition \ref{prop: gabber A}, it is radicial and $p_{\underline{u}}$ is \'{e}tale hence unramified at each point of $F$.
\end{proof}

\begin{lemma}\label{lem: securing U} \emph{(\cf\ \cite[Lem.~3.6.1]{CTHK97})}
Under the assumptions of Proposition~\ref{prop: gabber A} and any choice of linear projection $\underline{u}$ in $W(\mathfrak{o})$, let ${Z}^\prime := {Z} \cap p_{(u_1,\dots,u_{n-1})}^{-1}(V)$ and $U_1:= p_{(u_1,\dots,u_{n-1})}^{-1}(V) \setminus (p_{\underline{u}}^{-1}(p_{\underline{u}}({Z}^\prime)) \setminus {Z}^\prime)$.
Then $U_1 \subseteq p_{(u_1,\dots,u_{n-1})}^{-1}(V)$ is a Zariski-open neighbourhood of the point $z$, ${Z} \cap U_1 = {Z}^\prime$ and we have $p_{\underline{u}}^{-1}(p_{\underline{u}}({Z}^\prime)) \cap U_1 = {Z}^\prime$.
\end{lemma}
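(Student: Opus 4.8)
The statement to prove is Lemma~\ref{lem: securing U}: with $Z' := Z \cap p_{(u_1,\dots,u_{n-1})}^{-1}(V)$ and $U_1 := p_{(u_1,\dots,u_{n-1})}^{-1}(V) \setminus (p_{\underline u}^{-1}(p_{\underline u}(Z')) \setminus Z')$, one must check three things: (a) $U_1$ is Zariski-open in $p_{(u_1,\dots,u_{n-1})}^{-1}(V)$ and contains $z$; (b) $Z \cap U_1 = Z'$; and (c) $p_{\underline u}^{-1}(p_{\underline u}(Z')) \cap U_1 = Z'$. The proof is the verbatim analogue of \cite[Lem.~3.6.1]{CTHK97}, with the henselian discrete valuation ring $\mathfrak{o}$ playing the role of the field; the only point requiring care is the closedness of $p_{\underline u}^{-1}(p_{\underline u}(Z'))$, which is where the finiteness input from Proposition~\ref{prop: gabber A}\eqref{prop: gabber A 1} and Lemma~\ref{lem: securing V} enters.

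**Openness of $U_1$.** I would first argue that $p_{\underline u}^{-1}(p_{\underline u}(Z'))$ is closed in $p_{(u_1,\dots,u_{n-1})}^{-1}(V)$. By Proposition~\ref{prop: gabber A}\eqref{prop: gabber A 1}, $p_{(u_1,\dots,u_{n-1})}|_Z$ is finite, hence so is its base change $Z' \to V$ along $V \hookrightarrow \mathbb{A}^{n-1}_S$; composing with the finite map $\mathbb{A}^1_V \to V$ is irrelevant, but the point is that $Z' \to \mathbb{A}^1_V$ is a closed embedding by Lemma~\ref{lem: securing V}, so $p_{\underline u}(Z')$ is closed in $\mathbb{A}^1_V$. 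Its preimage under the morphism $p_{(u_1,\dots,u_{n-1})}^{-1}(V) \to \mathbb{A}^1_V$ (which is just the restriction of $p_{\underline u}$) is therefore closed, and removing it from $Z'$ leaves $p_{\underline u}^{-1}(p_{\underline u}(Z')) \setminus Z'$ as the difference of a closed set and the (locally closed, in fact closed in $p_{(u_1,\dots,u_{n-1})}^{-1}(V)$) subset $Z'$. Since $Z'$ is closed in $p_{(u_1,\dots,u_{n-1})}^{-1}(V)$, the difference $p_{\underline u}^{-1}(p_{\underline u}(Z')) \setminus Z'$ is closed in the open complement $p_{(u_1,\dots,u_{n-1})}^{-1}(V) \setminus Z'$; but I would rather argue directly: $Z'$ is itself closed in $p_{(u_1,\dots,u_{n-1})}^{-1}(V)$ (being $Z$ intersected with it), and $p_{\underline u}^{-1}(p_{\underline u}(Z'))$ is closed, so their set-theoretic difference is the intersection of a closed set with an open set, hence locally closed — and here is the subtle point that I expect to be the main obstacle: one needs this difference to actually be \emph{closed} in $p_{(u_1,\dots,u_{n-1})}^{-1}(V)$ for $U_1$ to be open. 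This is exactly where the finiteness of $p_{(u_1,\dots,u_{n-1})}|_Z$ is used a second time, precisely as in \cite{CTHK97}: $p_{\underline u}^{-1}(p_{\underline u}(Z'))$ is finite over $V$ (it sits inside $p_{(u_1,\dots,u_{n-1})}^{-1}(V)$ but maps to the finite $V$-scheme $p_{\underline u}(Z') \subseteq \mathbb{A}^1_V$... no — rather one uses that $p_{\underline u}|_{Z}$ restricted appropriately has closed image and the complement construction is a standard gluing). I would follow \cite[Lem.~3.6.1]{CTHK97} line by line here, noting that every step is purely topological/scheme-theoretic and makes no use of the base being a field.

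**The two equalities.** Once $U_1$ is open, the inclusion $z \in U_1$ is clear: $z \in Z'$, and $Z' \cap (p_{\underline u}^{-1}(p_{\underline u}(Z')) \setminus Z') = \emptyset$, so $z$ is not removed. For (b): $Z \cap U_1 \subseteq Z \cap p_{(u_1,\dots,u_{n-1})}^{-1}(V) = Z'$, and conversely $Z' \subseteq U_1$ since $Z'$ meets the removed set trivially, giving $Z \cap U_1 = Z'$. For (c): we have $Z' \subseteq p_{\underline u}^{-1}(p_{\underline u}(Z'))$ trivially and $Z' \subseteq U_1$, so $Z' \subseteq p_{\underline u}^{-1}(p_{\underline u}(Z')) \cap U_1$; conversely, if a point $x$ lies in $p_{\underline u}^{-1}(p_{\underline u}(Z')) \cap U_1$ but $x \notin Z'$, then $x \in p_{\underline u}^{-1}(p_{\underline u}(Z')) \setminus Z'$, which was removed in forming $U_1$, a contradiction. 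Hence $p_{\underline u}^{-1}(p_{\underline u}(Z')) \cap U_1 = Z'$, completing the proof. The entire argument is formal set-theoretic bookkeeping once the closedness of $p_{\underline u}^{-1}(p_{\underline u}(Z'))$ — resp.\ of its difference with $Z'$ — is established, and that single topological point, resting on the finiteness assertions of Proposition~\ref{prop: gabber A} and Lemma~\ref{lem: securing V}, is the only place where anything substantive happens.
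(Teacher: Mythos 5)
Your formal bookkeeping is fine: the containment $z\in U_1$ and the two equalities $Z\cap U_1=Z'$ and $p_{\underline u}^{-1}(p_{\underline u}(Z'))\cap U_1=Z'$ follow immediately from the definition of $U_1$, exactly as in the paper, and your argument that $p_{\underline u}^{-1}(p_{\underline u}(Z'))$ is closed in $p_{(u_1,\dots,u_{n-1})}^{-1}(V)$ (via the closed embedding $Z'\hookrightarrow \mathbb{A}^1_V$ from Lemma~\ref{lem: securing V}) also matches the paper. But the one substantive step --- which you yourself correctly single out as the only place where anything happens, namely that $p_{\underline u}^{-1}(p_{\underline u}(Z'))\setminus Z'$ is closed, equivalently that $Z'$ is \emph{open} inside $p_{\underline u}^{-1}(p_{\underline u}(Z'))$ --- is never actually proved. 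Your attempted explanation trails off (``finiteness used a second time \dots\ no --- rather \dots\ a standard gluing'') and then defers to following \cite[Lem.~3.6.1]{CTHK97} line by line. That is a genuine gap, and the mechanism you gesture at is not the right one: finiteness of $p_{(u_1,\dots,u_{n-1})}\vert_Z$ is not what makes this work.

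The missing argument, as in the paper, runs through \'etaleness. By Lemma~\ref{lem: securing V}, $p_{\underline u}$ is \'etale at every point of $Z'$, so the base change $p_{\underline u}\vert_{p_{\underline u}^{-1}(p_{\underline u}(Z'))}\colon p_{\underline u}^{-1}(p_{\underline u}(Z'))\to p_{\underline u}(Z')$ is \'etale at every point of $Z'$; hence $Z'$ is contained in the open \'etale locus $U''$ of this map. Moreover $Z'\to p_{\underline u}(Z')$ is an isomorphism (it is a closed embedding onto its image, again by Lemma~\ref{lem: securing V}), so both $Z'$ and $U''$ are \'etale over $p_{\underline u}(Z')$, whence the inclusion $Z'\hookrightarrow U''$ is \'etale and therefore an open immersion. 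This gives openness of $Z'$ in $p_{\underline u}^{-1}(p_{\underline u}(Z'))$, hence closedness of the difference and openness of $U_1$. Since this step rests on the \'etaleness and closed-embedding conclusions of Lemma~\ref{lem: securing V} rather than on any field-specific input, it does transfer from \cite{CTHK97} to the present setting --- but it has to be said, and your proposal neither says it nor identifies the correct ingredients for it.
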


\begin{proof}
By definition of $U_1$, $z$ lies inside $U_1$, ${Z} \cap U_1 = {Z}^\prime$ and $p_{\underline{u}}^{-1}(p_{\underline{u}}({Z}^\prime)) \cap U_1 = {Z}^\prime$.
It remains to show that $U_1 \subseteq p_{(u_1,\dots,u_{n-1})}^{-1}(V)$ is open.
By Lemma~\ref{lem: securing V}, $p_{\underline{u}}$ restricts to a closed embedding ${Z}^\prime \rightarrow \mathbb{A}_{t_n,V}$, so $p_{\underline{u}}^{-1}(p_{\underline{u}}({Z}^\prime)) \subset p_{(u_1,\dots,u_{n-1})}^{-1}(V)$ is closed.
We need to show that ${Z}^\prime \subseteq p_{\underline{u}}^{-1}(p_{\underline{u}}({Z}^\prime))$ is open.
\\
To this end, consider the \'{e}tale locus $U^{\prime\prime}$ of $p_{\underline{u}}\vert_{p_{\underline{u}}^{-1}(p_{\underline{u}}({Z}^\prime))}\colon p_{\underline{u}}^{-1}(p_{\underline{u}}({Z}^\prime)) \rightarrow p_{\underline{u}}({Z}^\prime)$.
By Lemma~\ref{lem: securing V}, $p_{\underline{u}}$ is \'{e}tale at all points of ${Z}^\prime$.
Thus, the base change $p_{\underline{u}}\vert_{p_{\underline{u}}^{-1}(p_{\underline{u}}({Z}^\prime))}$ is still \'{e}tale at all points of ${Z}^\prime$, \ie, ${Z}^\prime$ is contained inside the open subset $U^{\prime\prime}\subseteq p_{\underline{u}}^{-1}(p_{\underline{u}}({Z}^\prime))$.
But ${Z}^\prime \rightarrow p_{\underline{u}}({Z}^\prime)$ is an isomorphism by Lemma~\ref{lem: securing V}, so both ${Z}^\prime/ p_{\underline{u}}({Z}^\prime)$ and $U^{\prime\prime} / p_{\underline{u}}({Z}^\prime)$ are \'{e}tale and hence ${Z}^\prime \subseteq U^{\prime\prime}$ is open.
\end{proof}

\section{\texorpdfstring{Objectwise stable $\AA^1$-connectivity}{Objectwise stable A1-connectivity}}
\label{chapterobjectwise}

\noindent
In this section, we derive connectivity results for homotopy presheaves (\ie\ ``objectwise'' connectivity results).
These are used in the proof of our main theorem in the following section. 
Moreover, we show the left completeness of the $\AA^1$-Nisnevich-local t-structure on $S^1$- and on $\PP^1$-spectra.
Throughout this section, let $S$ be an arbitrary noetherian scheme of finite dimension.

\subsection*{Results for $S^1$-spectra}
We start with objectwise connectivity results for $S^1$-spectra.

\begin{proposition}\label{absoluteconnectivity}
Let $U\in\Sm_S$ be a scheme of dimension $e$. Then for $E\in \SH^s_{S^1>i+e}(S)$, one has
\[
[\Sigma^\infty_{S^1}(U_\pt)[i], L^\Aeins E] = 0,
\]
where $L^\Aeins$ is a fibrant replacement functor for the stable $\AA^1$-Nisnevich-local model structure.
\end{proposition}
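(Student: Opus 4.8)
The plan is to work with the explicit model $L^\Aeins E \simeq L^\infty E = \hocolim_k L^k(E)$ from Lemma~\ref{specialreplacement}, where $L^k(E) = \hhom(F^{\wedge k}, L^s E)$. Since $\Sigma_{S^1}^\infty(U_\pt)[i]$ is a compact object of $\SH^\ob_{S^1}$, we have
\[
[\Sigma_{S^1}^\infty(U_\pt)[i], L^\Aeins E] \;\cong\; \colim_k\, [\Sigma_{S^1}^\infty(U_\pt)[i], L^k(E)],
\]
so it suffices to prove that each group $[\Sigma_{S^1}^\infty(U_\pt)[i], L^k(E)]$ vanishes. The distinguished triangle of Remark~\ref{rem: Lk exact triangle},
\[
L^{k-1}(E) \to L^k(E) \to \hhom(\Sigma_{S^1}^\infty \AA^1, L^{k-1}(E)[1]),
\]
suggests an induction on $k$; but the internal-hom term, after the $(-\wedge\Sigma_{S^1}^\infty\AA^1,\hhom(\Sigma_{S^1}^\infty\AA^1,-))$ adjunction, involves a smash with $\Sigma_{S^1}^\infty\AA^1$, which raises the dimension of the test scheme by one while shifting the homological degree by one. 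The induction therefore has to be carried out for the strengthened statement
\[
P(k):\qquad [\Sigma_{S^1}^\infty(V_\pt)[a], L^k(E)] = 0 \quad\text{for all } V\in\Sm_S \text{ and all } a\in\ZZ \text{ with } a + \dim V \le i+e,
\]
the sought vanishing being the instance $(V,a)=(U,i)$ passed to the colimit over $k$.

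For the base case $P(0)$, note that $L^0(E) = L^s E$ is Nisnevich-local fibrant with $\pi^s_q = 0$ for $q\le i+e$ (this is exactly the hypothesis $E\in\SH^s_{S^1>i+e}(S)$). For $V$ of Krull dimension $d$, the Nisnevich cohomological dimension of $V$ is at most $d$, so in the descent spectral sequence $H^p_{\mathrm{Nis}}(V, \pi^s_q E) \Rightarrow [\Sigma_{S^1}^\infty(V_\pt)[q-p], L^s E]$ all contributing terms have $0\le p\le d$ and $q\ge i+e+1$, hence the abutment vanishes in degree $a$ whenever $a\le (i+e)-d$; this is $P(0)$. For the inductive step, fix $V$ of dimension $d$ with $a+d\le i+e$ and apply $[\Sigma_{S^1}^\infty(V_\pt)[a],-]$ to the triangle above. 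By $P(k-1)$ the leftmost term vanishes, so it is enough to kill $[\Sigma_{S^1}^\infty(V_\pt)[a], \hhom(\Sigma_{S^1}^\infty\AA^1, L^{k-1}(E)[1])] \cong [\Sigma_{S^1}^\infty(V_\pt)[a]\wedge\Sigma_{S^1}^\infty\AA^1, L^{k-1}(E)[1]]$. Using the cofibre sequence $\Sigma_{S^1}^\infty(V_\pt) \to \Sigma_{S^1}^\infty((\AA^1_V)_\pt) \to \Sigma_{S^1}^\infty(V_\pt)\wedge\Sigma_{S^1}^\infty\AA^1$, where $\AA^1_V = V\times_S\AA^1$ has dimension $d+1$, this group is sandwiched in a long exact sequence between $[\Sigma_{S^1}^\infty(V_\pt)[a], L^{k-1}(E)]$ and $[\Sigma_{S^1}^\infty((\AA^1_V)_\pt)[a-1], L^{k-1}(E)]$, both of which vanish by $P(k-1)$ since $a+d\le i+e$ and $(a-1)+(d+1)=a+d\le i+e$. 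Hence $P(k)$ holds, and the proposition follows (also replacing, without harm, $L^\Aeins E$ by the weakly equivalent model $L^\infty E$, as a weak equivalence of $\AA^1$-fibrant objects is in particular an objectwise one).

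The main obstacle is not any single computation but the design of the induction: one is forced to the uniform hypothesis $P(k)$ — allowing $V$ to vary and tracking only the combination $a+\dim V$ — precisely because each pass through the triangle of Remark~\ref{rem: Lk exact triangle} trades one unit of dimension for one unit of homological shift, so that no statement fixing the scheme can reproduce itself. The one genuinely geometric ingredient is the base case, where one uses that over a scheme of dimension $d$ the global Nisnevich sections functor lowers connectivity by at most $d$ (equivalently, the Nisnevich cohomological dimension is $\le d$); everything else is formal manipulation of distinguished triangles, the internal-hom/smash adjunction, and the compactness of $\Sigma_{S^1}^\infty(U_\pt)$.
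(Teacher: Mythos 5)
Your proof is correct and takes essentially the same route as the paper: the explicit model $L^\infty$ of Lemma~\ref{specialreplacement}, compactness of $\Sigma^\infty_{S^1}(U_\pt)$ to reduce to each $L^k(E)$, and an induction on $k$ via the triangle of Remark~\ref{rem: Lk exact triangle} together with the cofibre sequence $V_\pt\to (V\times\AA^1)_\pt\to V_\pt\wedge\AA^1$, your uniform hypothesis $P(k)$ in the pair (test scheme, shift) being equivalent to the paper's device of fixing the shift and letting $(U,E)$ vary with $E$ replaced by $E[1]$. The only cosmetic difference is the base case, where you quote the strongly convergent Nisnevich descent spectral sequence, while the paper proves the same vanishing by hand in Lemma~\ref{absoluteconnectivitylemma} from left completeness of the Nisnevich-local t-structure, the Milnor $\lim^1$-sequence and the bound $H^n_{\rm Nis}(V,G)=0$ for $n>\dim V$ --- the identical cohomological-dimension input.
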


\begin{remark}
Proposition~\ref{absoluteconnectivity} gives a connectivity result for a $U$-section of the homotopy presheaf $[\Sigma^\infty_{S^1}(-_\pt)[i], L^\Aeins E]$ with respect to the dimension of $U$.
However, we are interested in a connectivity result depending only on the dimension of the base scheme $S$.
The price we have to pay for this is to sheafify the homotopy presheaf, \ie, eventually we are interested in connectivity results for the Nisnevich-stalks of the presheaf $[\Sigma^\infty_{S^1}(-_\pt)[i], L^\Aeins E]$.
Unfortunately we cannot apply Proposition~\ref{absoluteconnectivity} directly to the stalks as their dimension is unbounded.
\end{remark}

\begin{proof}[Proof of Proposition~\ref{absoluteconnectivity}]
We work with the explicit model $L^\infty\hspace{-2pt}$ of Lemma~\ref{specialreplacement} as an $\AA^1$-Nisnevich-local fibrant replacement functor $L^\Aeins$.
By homotopy-exactness of $L^\infty$, we have to show that
\[
[\Sigma^\infty_{S^1}(U_\pt), \underset{k\to \infty}{\hocolim}~ L^k  (E) ] = 0
\]
for $U\in\Sm_S$ of dimension $e$ and $E\in \SH^s_{S^1>e}$.
Since $\Sigma^\infty_{S^1}(U_\pt)$ is compact, every homotopy class in question is represented by some $\Sigma^\infty_{S^1}(U_\pt)\to L^k (E)$.
Hence, it suffices to show, that for every $k\geq 0$
\[
[\Sigma^\infty_{S^1}(U_\pt), L^k E] = 0.
\]
We argue by induction on $k\geq 0$ for all $U\in\Sm_S$ of dimension $e$ and all spectra $E\in \SH^s_{S^1>e}$ at once.\\
For $k=0$ the statement follows directly from Lemma~\ref{absoluteconnectivitylemma} below.
Let $k\geq 1$.
The distinguished triangle in Remark~\ref{rem: Lk exact triangle} induces the long exact sequence
\[
\cdots\to [\Sigma^\infty_{S^1}(U_\pt), L^{(k-1)} E)] \to [\Sigma^\infty_{S^1}(U_\pt),L^{k} E] \to [\Sigma^\infty_{S^1}(U_\pt \wedge \AA^1),L^{(k-1)} E[1]] \to\cdots .
\]
The abelian group on the left-hand side vanishes by the induction hypothesis on $k$. In order to see the vanishing of the right-hand side, we observe that
\[
U\sqcup \AA^1 \cong U_\pt \vee \AA^1 \to (U\times \AA^1)\sqcup \AA^1 \cong U_\pt\times \AA^1 \to U_\pt \wedge \AA^1
\]
and therefore $U_\pt \to (U\times \AA^1)_\pt \to U_\pt \wedge \AA^1$ is a homotopy cofibre sequence in $\sPre_\pt(S)$.
This yields a distinguished triangle after applying the left Quillen functor $\Sigma_{S^1}^\infty$.
Consider the long exact sequence obtained by an application of $[-,L^{(k-1)} E[1]]$ to this triangle.
It suffices to show the vanishing of both abelian groups $[\Sigma_{S^1}^\infty (U_\pt)[1], L^{(k-1)} E[1]]$ and $[\Sigma_{S^1}^\infty (U\times \AA^1)_\pt,L^{(k-1)} E[1]]$.
For the first group, this follows from the inductive hypothesis on $k$ and likewise for the second, since the dimension of $U\times \AA^1$ is $e+1$ and $E[1]\in \SH^s_{S^1>e+1}$.
\end{proof}

\begin{lemma}\label{absoluteconnectivitylemma}
Let $U\in\Sm_S$ be a scheme of dimension $e$.
Then for $D\in \SH^s_{S^1>i+e}(S)$, one has
\[
[\Sigma^\infty_{S^1}(U_\pt)[i], L^\simpl D] = 0.
\]
\end{lemma}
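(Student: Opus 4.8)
The group $[\Sigma^\infty_{S^1}(U_\pt)[i], L^\simpl D]$ is the $i$-th homotopy group of the mapping spectrum $\hhom(\Sigma^\infty_{S^1}(U_\pt), L^\simpl D)$, and the plan is to compute it by Nisnevich descent. Since $L^\simpl D$ is fibrant for the Nisnevich-local model structure it satisfies Nisnevich descent, so on the small Nisnevich site of the noetherian finite-dimensional scheme $U$ there is a descent (hypercohomology) spectral sequence
\[
E_2^{p,q} = H^p_{{\rm Nis}}(U, \pi_q^\simpl D) \;\Longrightarrow\; [\Sigma^\infty_{S^1}(U_\pt)[q-p], L^\simpl D],
\]
where $\pi_q^\simpl D$ denotes the restriction to $U$ of the $q$-th Nisnevich homotopy sheaf of $D$. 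The two facts I would use about it are: its $E_2$-page is concentrated in the columns $0\le p\le \mathrm{cd}_{{\rm Nis}}(U)\le \dim U = e$, so each total degree of the abutment carries a finite filtration; and $D$ is bounded below in the Nisnevich-local t-structure, which makes that filtration complete and the spectral sequence converge.

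Now unwind the hypothesis: $D\in\SH^s_{S^1>i+e}(S)$ means exactly that $\pi_q^\simpl D = 0$ for all $q\le i+e$. The target group $[\Sigma^\infty_{S^1}(U_\pt)[i], L^\simpl D]$ is the abutment in total degree $i$, so it admits a finite filtration whose graded pieces are the subquotients $E_\infty^{p,q}$ with $q-p=i$, that is with $q=i+p$ and $0\le p\le e$. Every such $q$ lies in the range $q\le i+e$, so $E_2^{p,q}=H^p_{{\rm Nis}}(U,\pi_q^\simpl D)=0$ and hence $E_\infty^{p,q}=0$. Therefore $[\Sigma^\infty_{S^1}(U_\pt)[i], L^\simpl D]=0$.

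The vanishing is immediate once the spectral sequence is set up, so the only real point needing care is its construction and convergence in the model of non-symmetric $S^1$-spectra used here, together with the bound $\mathrm{cd}_{{\rm Nis}}(U)\le\dim U$; the latter is precisely what produces the shift ``$+e$'' in the hypothesis. One can realise the argument without invoking a general convergence statement: resolve $D$ against its Postnikov tower for the Nisnevich-local t-structure on $\SH^\simpl_{S^1}$, which is left complete (Remark~\ref{localizationboundedaboveproposition}), so that $L^\simpl D\simeq\holim_n L^\simpl(D_{\le n})$ with $D_{\le n}=0$ for $n\le i+e$ and successive fibres the Eilenberg--Mac~Lane spectra $H(\pi_n^\simpl D)[n]$. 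Using $[\Sigma^\infty_{S^1}(U_\pt)[j], H(\pi_n^\simpl D)[n]]\cong H^{n-j}_{{\rm Nis}}(U,\pi_n^\simpl D)$ together with $\mathrm{cd}_{{\rm Nis}}(U)\le e$, these groups vanish for all $n$ when $j\le i$ (either the coefficient sheaf is zero, for $n\le i+e$, or else $n-j>e$), and for all $n$ when $j=i+1$ apart from the single term $H^e_{{\rm Nis}}(U,\pi_{i+e+1}^\simpl D)$, whose appearance only makes the relevant tower Mittag--Leffler; the Milnor sequence for the homotopy limit then yields $[\Sigma^\infty_{S^1}(U_\pt)[i], L^\simpl D]=0$. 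The main obstacle is thus organisational rather than conceptual: bookkeeping the exact vanishing range and the one residual $\mathrm{lim}^1$-term.
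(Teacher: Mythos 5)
Your proposal is correct and is essentially the paper's argument: both rest on left completeness of the Nisnevich-local t-structure, the bound $\mathrm{cd}_{\rm Nis}(U)\le\dim U$ applied to the Eilenberg--Mac~Lane layers of the Postnikov filtration, and a Milnor $\lim^1$ argument. The only difference is organisational: you filter $L^\simpl D$ by the coconnective truncations $D_{\leq n}$ (equivalently, package this as the descent spectral sequence), while the paper runs the same induction along the connective covers $D_{\geq n}$, whose homotopy limit vanishes by left completeness.
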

\begin{proof}
It suffices to show that $[\Sigma^\infty_{S^1}(U_\pt), L^\simpl D]=0$ for $D\in \SH^s_{S^1>e}$.
Indeed, $[\Sigma^\infty_{S^1}(U_\pt)[i], L^\simpl D]\cong [\Sigma^\infty_{S^1}(U_\pt), L^\simpl\hspace{1pt}(D[-i])]$ as $L^s$ is homotopy-exact.
Recall that the Nisnevich-cohomological dimension is bounded by the Krull-dimension, \ie, for any sheaf $G$ of abelian groups on $\Sm_S$ and $n>\dim(U)$, we have $[\Sigma^\infty_{S^1}(U_\pt), L^\simpl HG[n]]=H^n_{Nis}(U,G)=0$ (see \eg\ \cite[Lem.~E.6.c]{TT90}).
\\
By the left completeness of the Nisnevich-local structure, there is a filtration
\[
\xymatrix@R=10pt@C=7pt{
0\simeq \underset{n\to\infty}{\holim} L^\simpl D_{ \geq n} \ar@{->}[r] & \ldots \ar@{->}[r]  & L^\simpl D_{\geq e+2} \ar@{->}[r] &L^\simpl D_{\geq e+1} = L^\simpl D\\
                                       & \ldots            & L^\simpl H\pi_{e+2}(D)[e+2] \ar@{<-}[u] & L^\simpl H\pi_{e+1}(D)[e+1] \ar@{<-}[u]
}
\]
and a surjection $0=[\Sigma^\infty_{S^1}(U_\pt),\holim_n L^\simpl D_{ \geq n}]\twoheadrightarrow \lim_n [\Sigma^\infty_{S^1}(U_\pt), L^\simpl D_{ \geq n}]$ by the Milnor-$\lim^1$-sequence.
Hence, $\lim_n [\Sigma^\infty_{S^1}(U_\pt), L^\simpl D_{\geq n}]=0$. For $i\geq 1$, there is a long exact sequence
\[
\cdots\to [\Sigma^\infty_{S^1}(U_\pt), L^\simpl D_{\geq e+i+1}] \to  [\Sigma^\infty_{S^1}(U_\pt), L^\simpl D_{\geq e+i}]\to [\Sigma^\infty_{S^1}(U_\pt), L^\simpl H\pi_{e+i}(D)[e+i]]\to\cdots
\]
where the abelian group on the right-hand side is zero by the result on the Nisnevich-cohomological dimension mentioned above.
For this reason, the projection $\lim [\Sigma^\infty_{S^1}(U_\pt), L^\simpl D_{\geq n}] \twoheadrightarrow [\Sigma^\infty_{S^1}(U_\pt), L^\simpl D_{\geq e+1}]$ is surjective and therefore we get $[\Sigma^\infty_{S^1}(U_\pt), L^\simpl D_{\geq e+1}]=0$ as desired. 
\end{proof}

\begin{corollary}\label{absoluteconnectivitylemmanobp}
Let $U\in\Sm_S$ be an $S$-pointed scheme of dimension $e$.
For $D\in \SH^s_{S^1>i+e}(S)$, one has
\[
[\Sigma^\infty_{S^1}(U)[i], L^\simpl D] = 0.
\]
\end{corollary}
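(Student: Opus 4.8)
The plan is to deduce this directly from Lemma~\ref{absoluteconnectivitylemma} by exhibiting $\Sigma^\infty_{S^1}(U)$ as a direct summand of $\Sigma^\infty_{S^1}(U_\pt)$ in $\SH^s_{S^1}(S)$. Let $\sigma\colon S\to U$ be the section of the structure morphism $\rho\colon U\to S$ which equips $U$ with its $S$-point. Postcomposition with $\sigma$, \resp\ with $\rho$, gives morphisms of pointed simplicial presheaves $\sigma_\pt\colon S_\pt\to U_\pt$ and $\rho_\pt\colon U_\pt\to S_\pt$ with $\rho_\pt\circ\sigma_\pt=\mathrm{id}$. The strict cofibre of $\sigma_\pt$ is, by inspection, the presheaf $\hom_{\Sm_S}(-,U)$ with basepoint $\sigma$, \ie\ precisely the pointed scheme $U$ whose suspension spectrum appears in the statement; thus $S_\pt\xrightarrow{\sigma_\pt}U_\pt\to U$ is a cofibre sequence of pointed presheaves, split by $\rho_\pt$.

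Applying the left Quillen functor $\Sigma^\infty_{S^1}$, which preserves wedge sums and homotopy cofibre sequences, then yields a split distinguished triangle in $\SH^s_{S^1}(S)$, so that $\Sigma^\infty_{S^1}(U_\pt)\simeq\Sigma^\infty_{S^1}(S_\pt)\vee\Sigma^\infty_{S^1}(U)$; in particular $\Sigma^\infty_{S^1}(U)$ is a retract of $\Sigma^\infty_{S^1}(U_\pt)$. Since $[-,L^\simpl D]$ is an additive functor, $[\Sigma^\infty_{S^1}(U)[i],L^\simpl D]$ is then a direct summand of $[\Sigma^\infty_{S^1}(U_\pt)[i],L^\simpl D]$. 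The underlying scheme of $U$ has dimension $e$ and $D\in\SH^s_{S^1>i+e}(S)$ by hypothesis, so the latter group vanishes by Lemma~\ref{absoluteconnectivitylemma}; hence so does the summand, which is the claim.

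I expect no genuine obstacle here; the argument is formal. The two points to keep in mind are that replacing a scheme by the same scheme equipped with an $S$-rational basepoint does not change its Krull dimension (so that Lemma~\ref{absoluteconnectivitylemma} applies with the same $e$), and that the cofibre sequence $S_\pt\to U_\pt\to U$ is genuinely split, so that the long exact sequence obtained by applying $[-,L^\simpl D]$ breaks up into short exact sequences and $[\Sigma^\infty_{S^1}(U)[i],L^\simpl D]$ injects into the vanishing group $[\Sigma^\infty_{S^1}(U_\pt)[i],L^\simpl D]$.
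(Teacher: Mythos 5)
Your argument is correct, but it takes a different route from the paper at the decisive step. Both proofs start from the same homotopy cofibre sequence $S_\pt\to U_\pt\to U$ and the resulting distinguished triangle $\Sigma^\infty_{S^1}(S_\pt)\to\Sigma^\infty_{S^1}(U_\pt)\to\Sigma^\infty_{S^1}(U)\to\Sigma^\infty_{S^1}(S_\pt)[1]$. The paper, however, does not use the splitting: it runs the long exact sequence obtained from $[-,L^\simpl D]$, which only closes when $\dim(U)>\dim(S)$ (the outer term $[\Sigma^\infty_{S^1}(S_\pt)[i+1],L^\simpl D]$ needs one more degree of connectivity than the hypothesis provides), and in the boundary case $\dim(U)=\dim(S)$ it argues geometrically that the section is \'etale and a closed immersion, hence splits off a component, so $U\cong (U')_\pt$ and Lemma~\ref{absoluteconnectivitylemma} applies directly. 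You instead exploit that $\Sigma^\infty_{S^1}(\rho_\pt)$ retracts the first map of the triangle, so by the standard splitting lemma for triangulated categories the connecting map vanishes, $\Sigma^\infty_{S^1}(U)$ is a retract of $\Sigma^\infty_{S^1}(U_\pt)$, and the vanishing follows from Lemma~\ref{absoluteconnectivitylemma} in one stroke, with no case distinction and no geometric input; the same argument transfers verbatim to the $\AA^1$-local analogue in the subsequent corollary. The only implicit point you should be aware of is that $\sigma_\pt$ is in general not a \emph{projective} cofibration, so identifying the strict cofibre with the homotopy cofibre uses that pushouts of simplicial presheaves along monomorphisms are homotopy pushouts for the objectwise weak equivalences; this is true (and is equally implicit in the paper's own use of the triangle), so it is not a gap. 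Also note that the splitting already exists in $\SH^\ob_{S^1}$, where the brackets of the statement are computed, since $\rho_\pt\circ\sigma_\pt=\id$ holds on the nose at the presheaf level.
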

\begin{proof}
The basepoint $s\colon S\to U$ is a splitting of the structure morphism $p\colon U\to S$.
In particular, $\dim(U)\geq\dim(S)$.
Consider the distinguished triangle
\[
\Sigma^\infty_{S^1}(S_\pt) \to \Sigma^\infty_{S^1}(U_\pt) \to \Sigma^\infty_{S^1}(U) \to \Sigma^\infty_{S^1}(S_\pt)[1].
\]
If $\dim(U)>\dim(S)$ then the assertion follows from the previous Lemma \ref{absoluteconnectivitylemma} applied to the entries $\Sigma^\infty_{S^1}(U_\pt)$ and $\Sigma^\infty_{S^1}(S_\pt)[1]$ of the triangle.
Now we consider the case $\dim(U)=\dim(S)$.
Because of the splitting $s\colon S\to U$, $p$ is surjective.
Since $p$ is smooth of relative dimension zero, it follows that $p$ is \'etale.
Thus, the section $s$ itself is \'etale. As it is also a closed immersion, the image of $s$ is a component of $U$, \ie, $U\cong (U')_\pt$ for some $U'\in\Sm_S$ with $\dim(U')\leq \dim(U)$.
The result then follows from the previous Lemma \ref{absoluteconnectivitylemma} applied to $\Sigma^\infty_{S^1}(U'_\pt)$.
\end{proof}

We get the following analogue as a corollary to Proposition~\ref{absoluteconnectivity}.

\begin{corollary}
Let $U\in\Sm_S$ be an $S$-pointed scheme of dimension $e$.
For $E\in \SH^s_{S^1>i+e}(S)$, one has
\[
[\Sigma^\infty_{S^1}(U)[i], L^\Aeins E] = 0.
\] 
\end{corollary}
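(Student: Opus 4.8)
The plan is to reduce the statement to Proposition~\ref{absoluteconnectivity} by repeating the argument of Corollary~\ref{absoluteconnectivitylemmanobp} verbatim, replacing the input Lemma~\ref{absoluteconnectivitylemma} everywhere by Proposition~\ref{absoluteconnectivity}. First I would note that the $S$-point $s\colon S\to U$ is a section of the structure morphism $p\colon U\to S$, so $\dim(U)\geq\dim(S)$. As in the proof of Corollary~\ref{absoluteconnectivitylemmanobp}, there is a distinguished triangle
\[
\Sigma^\infty_{S^1}(S_\pt)\to\Sigma^\infty_{S^1}(U_\pt)\to\Sigma^\infty_{S^1}(U)\to\Sigma^\infty_{S^1}(S_\pt)[1]
\]
in $\SH^s_{S^1}(S)$, and applying $[-[i],L^\Aeins E]$ to it yields a long exact sequence whose flanking terms are $[\Sigma^\infty_{S^1}(U_\pt)[i],L^\Aeins E]$ and $[\Sigma^\infty_{S^1}(S_\pt)[i+1],L^\Aeins E]$.

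Then I would distinguish two cases, exactly as in Corollary~\ref{absoluteconnectivitylemmanobp}. If $\dim(U)>\dim(S)$, then $\dim(S)\leq e-1$, so Proposition~\ref{absoluteconnectivity} applied to $U_\pt$ (of dimension $e$) and to $S_\pt$ (of dimension $\leq e-1$, with the extra shift absorbed because $(i+1)+\dim(S)\leq i+e$) shows that both flanking terms vanish, using the hypothesis $E\in\SH^s_{S^1>i+e}(S)$; hence the middle term vanishes. If $\dim(U)=\dim(S)$, then $p$ is surjective (it admits the section $s$) and smooth of relative dimension zero, hence \'etale; therefore the closed immersion $s$ is \'etale, so its image is a connected component of $U$, giving $U\cong(U')_\pt$ for some $U'\in\Sm_S$ with $\dim(U')\leq e$. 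Then $\Sigma^\infty_{S^1}(U)\cong\Sigma^\infty_{S^1}(U'_\pt)$, and Proposition~\ref{absoluteconnectivity} applies directly since $\SH^s_{S^1>i+e}(S)\subseteq\SH^s_{S^1>i+\dim(U')}(S)$.

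I do not expect a genuine obstacle: the entire content is already contained in the proof of Corollary~\ref{absoluteconnectivitylemmanobp}, now fed by Proposition~\ref{absoluteconnectivity} instead of Lemma~\ref{absoluteconnectivitylemma}. The only subtle point is the bookkeeping of the degree shift in the $\Sigma^\infty_{S^1}(S_\pt)[i+1]$-term, and it is precisely this that forces the case distinction on whether $\dim(U)$ strictly exceeds $\dim(S)$.
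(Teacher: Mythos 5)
Your proposal is correct and is exactly the paper's own argument: the paper proves this corollary by the one-line remark that the proof of Corollary~\ref{absoluteconnectivitylemmanobp} goes through verbatim with Proposition~\ref{absoluteconnectivity} in place of Lemma~\ref{absoluteconnectivitylemma}, which is precisely what you carry out (including the correct degree bookkeeping for the $\Sigma^\infty_{S^1}(S_\pt)[i+1]$-term and the case distinction on $\dim(U)$ versus $\dim(S)$).
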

\begin{proof}
The proof is literally the same as the proof of Corollary~\ref{absoluteconnectivitylemmanobp} using Proposition~\ref{absoluteconnectivity} instead of Lemma~\ref{absoluteconnectivitylemma}. 
\end{proof}

\begin{corollary}\label{tstructurenondegenerate}
The $\AA^1$-Nisnevich-local t-structure on $\SH^\Aeins_{S^1}(S)$ is left complete and hence non-de\-generate.
In particular,
\[
L^\Aeins \underset{n\to\infty}{\holim} (E_{\leq n}) \sim \underset{n\to\infty}{\holim} L^\Aeins (E_{\leq n}).
\]
\end{corollary}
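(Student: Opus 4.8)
The plan is to deduce left completeness from the general criterion in Remark~\ref{completenessanddegeneration}: since the $\AA^1$-Nisnevich-local t-structure is obtained from Proposition~\ref{tstructureconstruction}, it suffices to verify that $\cap_{n\geq 0}\SH^\Aeins_{S^1\geq n}=\{0\}$ \emph{provided} that $\SH^\Aeins_{S^1\geq 0}$ is stable under countable homotopy products. The latter stability is automatic: homotopy products are computed objectwise and levelwise, $\Omega^\infty_{S^1}$ commutes with them, and the connectivity of a stalk of a product of spectra is bounded below by the infimum of the connectivities of the factors (using that Nisnevich stalks are a filtered colimit, which commutes with the relevant homotopy groups). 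So the real content is the vanishing $\cap_{n\geq 0}\SH^\Aeins_{S^1\geq n}=\{0\}$.

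To prove that vanishing, I would take $E\in\cap_{n\geq 0}\SH^\Aeins_{S^1\geq n}$, i.e.\ an $\AA^1$-local spectrum whose $\AA^1$-homotopy sheaves $\pi^\Aeins_i E$ vanish for all $i$, and show $E\simeq 0$. The key is to test $E$ against the generators $\Sigma^\infty_{S^1}U_\pt[i]$ and to use Corollary~\ref{localcorollary}: it is enough to check that $\pi_0(\mathfrak{s}^*E)=0$ for every henselian local scheme $S^h_s$. By Lemma~\ref{interchangebysmoothstable}.\eqref{stablelimitbasechange1}, base change along $\mathfrak{s}$ commutes with $L^\Aeins$, so $\mathfrak{s}^*E$ is again $\AA^1$-local with vanishing $\AA^1$-homotopy sheaves, and now lives over a base of the \emph{same} Krull dimension $e:=\dim S^h_s\leq \dim S$. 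Over such a henselian base the stalk computation reduces to a single section: $\pi_0(\mathfrak{s}^*E)_{(S^h_s,s)}\cong[\Sigma^\infty_{S^1}(S^h_{s,\pt}), \mathfrak{s}^*E]$, which by Lemma~\ref{interchangebysmoothstable}.\eqref{stablelimitbasechange2} is a filtered colimit of groups $[\Sigma^\infty_{S^1}(U_\pt), E]$ over Nisnevich neighbourhoods $U$ of $s$ in $S$, each of dimension $\leq e$. Now I apply Proposition~\ref{absoluteconnectivity}: a spectrum with $\pi^\Aeins_i=0$ for all $i$ lies in $\SH^\Aeins_{S^1>i+e}$ for every $i$ (in fact in $\SH^\Aeins_{S^1\geq m}$ for all $m$, and one should first pass to $L^\Aeins E$ and use that $\SH^\Aeins_{S^1\geq m}$ can be described via the connectivity of $L^\Aeins E$ — here one has to be slightly careful, see below), hence $[\Sigma^\infty_{S^1}(U_\pt)[i], L^\Aeins E]=0$ for all such $U$ and all $i$. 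Taking the colimit and then $i=0$ gives $\pi_0(\mathfrak{s}^*E)=0$, so $E\simeq 0$ by Corollary~\ref{localcorollary}.

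The "in particular" statement then follows formally: $L^\Aeins$ is homotopy-exact, so it sends the tower $\{E_{\leq n}\}$ to a tower of $\AA^1$-local truncations, and left completeness of the t-structure says precisely that $L^\Aeins E$ is recovered as the homotopy limit of $L^\Aeins(E_{\leq n})$; combined with $L^\Aeins$ commuting with the homotopy limit up to weak equivalence (which is what the displayed equivalence asserts, read in $\SH^\Aeins_{S^1}$) this gives the formula. One small point to spell out is that $\holim_n(E_{\leq n})$ on the left is formed in the unlocalized category and $L^\Aeins$ is applied to it, whereas on the right the homotopy limit is taken after localizing each stage; these agree because $L^\Aeins$ is a right Quillen functor on the stable model structures and hence commutes with homotopy limits.

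**Main obstacle.** The delicate step is the interface between the two descriptions of $\SH^\Aeins_{S^1\geq 0}$: the t-structure's positive part is \emph{defined} orthogonally (via the generators), whereas Proposition~\ref{absoluteconnectivity} is stated for $E\in\SH^s_{S^1>i+e}$, i.e.\ using the \emph{Nisnevich-local} connectivity of the underlying spectrum, not the $\AA^1$-local one. So the hypothesis "$\pi^\Aeins_i E=0$ for all $i$" must first be converted into "$L^\Aeins E$ has vanishing Nisnevich-local homotopy sheaves in all degrees", i.e.\ $L^\Aeins E\in\SH^s_{S^1>N}$ for every $N$; this is exactly the statement that the $\AA^1$-fibrant replacement of a spectrum with vanishing $\AA^1$-homotopy sheaves is highly Nisnevich-connected, which is legitimate since $\pi^\Aeins_i E=\pi^s_i(L^\Aeins E)$ by definition. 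Once that translation is in place, everything else is bookkeeping with the base-change lemmas and the cohomological-dimension bound already established. The other place requiring care is the stability of $\SH^\Aeins_{S^1\geq 0}$ under countable homotopy products: one must check that $\Omega^\infty_{S^1}$ and the Postnikov description interact correctly with products of fibrant spectra, but this is the standard argument (fibrant replacement and $\Omega^\infty_{S^1}$ preserve products up to weak equivalence, and countable products of connective spectra are connective on stalks).
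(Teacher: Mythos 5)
There is a genuine gap, and it sits at the foundation of your strategy. You want to invoke the criterion of Remark~\ref{completenessanddegeneration} (Lurie), which requires that $\SH^\Aeins_{S^1\geq 0}$ be stable under countable homotopy products, and you treat this stability as ``automatic''. It is not, and your argument for it does not work, for two reasons. First, membership in $\SH^\Aeins_{S^1\geq 0}$ is defined by generation/orthogonality as in Proposition~\ref{tstructureconstruction}; it is \emph{not} characterized by connectivity of the $\AA^1$-homotopy sheaves. The latter condition defines the a priori smaller subcategory $\SH^{\Aeins,\pi}_{S^1\geq 0}$ of Remark~\ref{tstructureremark}, and the equality $\SH^\Aeins_{S^1\geq 0}=\SH^{\Aeins,\pi}_{S^1\geq 0}$ is exactly the stable $\AA^1$-connectivity property, which fails for general $S$ by Ayoub's counterexample (Remark~\ref{counterexample}). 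Second, even for the sheaf-level connectivity condition your stalkwise argument is invalid: the Nisnevich stalk of an infinite product is not the product of the stalks, because filtered colimits do not commute with infinite products, so ``the connectivity of a stalk of a product is bounded below by the infimum of the connectivities'' is not a proof. The paper is careful about precisely this point: it uses the product criterion only for the \emph{objectwise} t-structure (where connectivity is presheaf-level), cites Spitzweck for the Nisnevich-local case, and notes in Remark~\ref{remarkrightcompleteness} that left completeness of t-structures obtained from Proposition~\ref{tstructureconstruction} is a non-trivial issue. The same conflation also undermines your second step: $E\in\cap_{n}\SH^\Aeins_{S^1\geq n}$ does \emph{not} mean that all sheaves $\pi^\Aeins_i(E)$ vanish. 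And notice that if they all did vanish, then $L^\Aeins E$ would be Nisnevich-locally trivial immediately (a spectrum with trivial homotopy sheaves is zero in $\SH^\simpl_{S^1}$), so your whole detour through Corollary~\ref{localcorollary}, base change to henselian local schemes and Proposition~\ref{absoluteconnectivity} would be superfluous --- a sign that the reduction is not engaging with the actual difficulty.

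The paper's proof avoids both problems by proving left completeness directly, and Proposition~\ref{absoluteconnectivity} (which you correctly identify as the key quantitative input) is applied in a different place: for an arbitrary spectrum $E$, the $\AA^1$-local truncations are $L^\Aeins((-)_{\leq n})$ with $(-)_{\leq n}$ the \emph{Nisnevich-local} truncations, so the fibre of $L^\Aeins E\to\holim_n L^\Aeins(E_{\leq n})$ is $\holim_n L^\Aeins(E_{\geq n})$, where $E_{\geq n}\in\SH^s_{S^1\geq n}(S)$. For fixed $U\in\Sm_S$ and $i$, Proposition~\ref{absoluteconnectivity} gives $[\Sigma^\infty_{S^1}U_\pt[i],L^\Aeins(E_{\geq n})]=0$ as soon as $n>i+\dim(U)$, and then a sectionwise Milnor $\lim^1$-argument shows that $\holim_n L^\Aeins(E_{\geq n})$ has trivial homotopy presheaves, hence is zero. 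No product stability of $\SH^\Aeins_{S^1\geq 0}$ and no identification of $\SH^\Aeins_{S^1\geq 0}$ with sheaf-connective spectra is needed; non-degeneracy then follows from left completeness via Remark~\ref{completenessanddegeneration} (in that direction the implication is unconditional). If you want to salvage your write-up, replace the Lurie-criterion scaffolding by this direct tower argument: your use of Proposition~\ref{absoluteconnectivity} must be fed the Nisnevich-connective skeleta $E_{\geq n}$, which is exactly the hypothesis it requires.
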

\begin{proof}
First note, that the truncation functors of the $\AA^1$-Nisnevich-local t-structure are (after inclusion to $\SH_{S^1}^\simpl$) given by $L^\Aeins((-)_{\leq n})$ (\cf~Proposition~\ref{tstructureconstruction}).
Consider a spectrum $E\in\Spt_{S^1}(S)$.
To see that $L^\Aeins E \to\holim_{n} L^\Aeins(E_{\leq n})$ is an isomorphism in $\SH^\simpl_{S^1}$, we may equivalently show $\holim_{n} L^\Aeins(E_{\geq n})\simeq 0$ which is implied by the triviality of the group $\pi_i (\holim_{n} L^\Aeins(E_{\geq n}))(U)$ for every integer $i$ and every $U\in\Sm_S$.
Equivalently, we show that $\pi_i \holim_{n} (L^\Aeins(E_{\geq n})(U))$ is trivial.
Proposition~\ref{absoluteconnectivity} yields $[\Sigma^\infty_{S^1} U_+[i],L^\Aeins(E_{\geq n}) ] = 0$ for all integers $n> i+\dim(U)$.
Hence, we obtain $\lim_n \pi_i(L^\Aeins(E_{\geq n})(U)) = 0$.
Using Milnor's $\lim^1$-sequence, it follows that the group $\pi_i \holim_{n} (L^\Aeins(E_{\geq n})(U))$ is trivial:
Indeed, the $\lim^1$-term is trivial as the occurring groups are eventually zero.
\end{proof}

\subsection*{Results for $\PP^1$-spectra}
In this subsection, we show some analogous staments to those of the preceeding section for $\PP^1$-spectra. 
The results of this subsection are not needed for the rest of the paper but are of independent interest.

\begin{proposition}\label{absoluteconnectivityp1}
Let $U\in\Sm_S$ be a scheme of dimension $e$. Then for $E\in \SH_{> i+e}(S)$, one has
\[
[\Sigma^\infty_{\PP^1}(U_\pt)[i]\langle q\rangle , E]_{\SH} = 0
\]
for all $q\in\ZZ$.
\end{proposition}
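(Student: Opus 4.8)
The plan is to reduce the statement for $\PP^1$-spectra to the corresponding statement for $S^1$-spectra, namely Proposition~\ref{absoluteconnectivity}, by unwinding the adjunction $(\Sigma^\infty_{\GG_m},(-)_0)$ together with the explicit fibrant replacement functor $\Theta_{\GG_m}$ recalled just before this subsection. Concretely, for $U\in\Sm_S$ and $q\in\ZZ$, I would first observe that $\Sigma^\infty_{\PP^1}(U_\pt)[i]\langle q\rangle$ is a compact object of $\SH(S)$ — it is built from $U_\pt$, which is a compact generator, by the simplicial shift $[i]$ and the $\GG_m$-shift $\langle q\rangle$, and compactness is preserved under these operations. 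Hence any element of $[\Sigma^\infty_{\PP^1}(U_\pt)[i]\langle q\rangle, E]_{\SH}$ is computed in the fibrant model $\Theta_{\GG_m}E$, and by compactness factors through a finite stage, i.e.\ through $\Omega^k_{\GG_m}(E_{k+m})$ for $k$ large (after applying the levelwise $S^1$-fibrant replacement $\Theta^\Aeins_{S^1}$).

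The key step is then to identify, for each level, the morphism group in question with an $S^1$-spectrum computation to which Proposition~\ref{absoluteconnectivity} applies. Using the adjunction \eqref{gmsuspensionadjunction} and the shift adjunctions, a map $\Sigma^\infty_{\PP^1}(U_\pt)[i]\langle q\rangle \to E$ corresponds to a map of $S^1$-spectra $\Sigma^\infty_{S^1}(U_\pt)[i] \to \Omega^{\bullet}_{\GG_m}$ of an appropriate shifted level of $E$; since $\GG_m$ is (the suspension spectrum of) a space that is $S^1$-equivalent, after $\AA^1$-localization, to a $(-1)$-connective object — more precisely $\Sigma^\infty_{S^1}\GG_m$ shifts connectivity by $1$ in the relevant sense and $\Omega_{\GG_m}$ raises it — the relevant levels of a spectrum $E\in\SH_{>i+e}(S)$ land in $\SH^s_{S^1>i+e}(S)$, so that Proposition~\ref{absoluteconnectivity} forces each of these groups to vanish. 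The precise bookkeeping uses the identification of $\SH_{\geq 0}$ recorded in the Remarks after the definition of the homotopy t-structure, in particular that $\Omega^\infty_{\GG_m}(\SH_{\leq -1})\subseteq \SH^\Aeins_{S^1\leq -1}$ and, dually, that $\GG_m$-looping of a $>i+e$-connective $\PP^1$-spectrum (at any $\GG_m$-level) stays $>i+e$-connective as an $S^1$-spectrum. One then passes to the colimit over $k$: since each finite stage contributes zero, the colimit is zero, giving $[\Sigma^\infty_{\PP^1}(U_\pt)[i]\langle q\rangle, E]_{\SH}=0$.

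I expect the main obstacle to be the careful matching of connectivity indices across the two stabilizations: one must verify that for $E\in\SH_{>i+e}(S)$ each level-wise $S^1$-spectrum appearing in $\Theta_{\GG_m}E$ (after the levelwise $\Aeins$-fibrant replacement) genuinely lies in $\SH^s_{S^1>i+e}(S)$ rather than merely in $\SH^\Aeins_{S^1>i+e}(S)$, because Proposition~\ref{absoluteconnectivity} is stated with the Nisnevich-local ($s$) connectivity hypothesis on the input. This should follow from the fact that the levels of a fibrant $\PP^1$-spectrum are themselves $\AA^1$-Nisnevich-local fibrant $S^1$-spectra, so that $\AA^1$-connectivity and Nisnevich-local connectivity coincide for them; but spelling this out, and handling the $\GG_m$-shift $\langle q\rangle$ for negative $q$ (where one passes to higher $\GG_m$-levels rather than looping), is where the argument needs genuine care. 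Once this is in place the rest is a formal compactness-and-colimit argument exactly parallel to the proof of Proposition~\ref{absoluteconnectivity}.
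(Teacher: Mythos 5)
Your reduction to Proposition~\ref{absoluteconnectivity} via the $(\Sigma^\infty_{\GG_m},(-)_0)$ adjunction and the colimit model $\Theta_{\GG_m}$ is the right general direction, but the step you yourself flag as the ``main obstacle'' is a genuine gap, and it cannot be repaired the way you suggest. You need to know that for $E\in\SH_{>i+e}(S)$ the $S^1$-spectra $\Omega^k_{\GG_m}E_{k+q}$ (equivalently $\Omega^\infty_{\GG_m}(E\langle q\rangle)$) are $(>i+e)$-connective. Over a general noetherian base this is simply not available: the homotopy t-structure is defined via Proposition~\ref{tstructureconstruction} by orthogonality, so membership in $\SH_{>i+e}$ gives no direct control over the levels. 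The identification $\SH_{\geq 0}=\{X\mid \Omega^\infty_{\GG_m}(X\langle q\rangle)\in\SH^\Aeins_{S^1\geq 0}\}$ is stated in the paper only over a field, because it rests on Morel's stable connectivity theorem ($\SH^\Aeins_{S^1\geq 0}=\SH^{\Aeins,\pi}_{S^1\geq 0}$); over a general base only the inclusions $\Sigma^\infty_{\GG_m}(\SH^\Aeins_{S^1\geq 0})\subseteq\SH_{\geq 0}$ and $\Omega^\infty_{\GG_m}(\SH_{\leq -1})\subseteq\SH^\Aeins_{S^1\leq -1}$ are recorded, and the ``dual'' statement you invoke (that $\GG_m$-looping of a $(>i+e)$-connective $\PP^1$-spectrum stays $(>i+e)$-connective as an $S^1$-spectrum) is exactly the kind of connectivity assertion this paper is trying to establish, not something one may assume. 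Your secondary point (that for $\AA^1$-Nisnevich-local fibrant levels the $\AA^1$- and $s$-homotopy sheaves agree, so $\AA^1$-connectivity would upgrade to the $s$-connectivity needed in Proposition~\ref{absoluteconnectivity}) is fine as far as it goes, but it presupposes the $\AA^1$-connectivity of the levels, which is the missing ingredient.

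The paper's proof avoids this issue entirely: by Proposition~\ref{tstructureconstruction}, $\SH_{>i+e}$ is generated under extensions, small sums and cones by the shifted generators $\S[i+e+1]$, and the functor $[\Sigma^\infty_{\PP^1}(U_\pt)[i]\langle q\rangle,-]_{\SH}$ kills all three operations (compactness of the source handles sums). So one only has to treat $E=\Sigma^\infty_{\PP^1}(V_\pt)[n]\langle q'\rangle$ with $n\geq i+e+1$, where everything is explicit: unwinding $\Omega^\infty_{\GG_m}$ as a filtered colimit, using compactness of $\Sigma^\infty_{S^1}(U_\pt)$ and the $\AA^1$-local equivalence $\GG_m\sim\PP^1[-1]$, the group becomes $[\Sigma^\infty_{S^1}(U\times\PP^k)_\pt[i-n-k],L^\Aeins\Sigma^\infty_{S^1}(V_\pt\wedge\GG_m^{\wedge(k+q'-q)})]$, and here the target is manifestly in $\SH^s_{S^1\geq 0}$ while the source has dimension $e+k$, so Proposition~\ref{absoluteconnectivity} applies. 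If you want to salvage your argument, replace the attempted control of the levels of an arbitrary $E$ by this reduction to generators; without that (or some substitute proof that $\Omega^\infty_{\GG_m}$ preserves connectivity over a general base), the proposal does not go through.
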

\begin{proof}
Set $\mathcal{F}:=[\Sigma^\infty_{\PP^1}(U_\pt)[i]\langle q\rangle , -]_{\SH}$ for abbreviation.
By the construction in Proposition \ref{tstructureconstruction}, the class $\SH_{> i+e}$ is generated under extensions, (small) sums and cones from $\S[i+e+1]$.
If $E$ is obtained from an extension $E'\to E\to E''$ and $\mathcal{F}$ vanishes on $E'$ and $E''$, then it also vanishes on $E$.
If $E$ is a (small) sum of objects $E'_\alpha$ on which $\mathcal{F}$ vanishes, we use the homotopy-compactness of $\Sigma^\infty_{\PP^1} (U_\pt)[i]\langle q\rangle$ to conclude that $\mathcal{F}(E)=0$.
Suppose that $E$ sits in a distinguished triangle $E'\to E''\to E\to E'[1]$ and we know the vanishing of $\mathcal{F}$ on $E''$ and $E'[1]$, then we know it on $E$.
Summing up, it suffices to show that $\mathcal{F}(\S[n])= 0$ for all $n\geq i+e+1$, \ie, $[\Sigma^\infty_{\PP^1} (U_\pt) [i]\langle q\rangle ,~ \Sigma_{\PP^1}^\infty (V_\pt)[n] \langle q' \rangle ]_{\SH} = 0$ for all $V\in\Sm_S$ and $q'\in\ZZ$. We compute
\[
 \begin{array}{rcl}
 [\Sigma^\infty_{\PP^1} (U_\pt) [i]\langle q\rangle ,~ \Sigma_{\PP^1}^\infty (V_\pt)[n] \langle q' \rangle ]_{\SH} 
  &\cong & [\Sigma^\infty_{\PP^1} (U_\pt) [i-n],~ \Sigma_{\PP^1}^\infty (V_\pt) \langle q'-q \rangle ]_{\SH} \vspace{3pt} \\
  &\cong & [\Sigma^\infty_{S^1} (U_\pt) [i-n],~ \Omega_{\GG_m}^\infty (\Sigma_{\GG_m}^\infty\Sigma_{S^1}^\infty (V_\pt) \langle q'-q \rangle )] \vspace{3pt} \\
  &\cong & [\Sigma^\infty_{S^1} (U_\pt) [i-n],~ \colim_k \Omega_{\GG_m}^k L^\Aeins (\Sigma_{S^1}^\infty (V_\pt) \wedge \GG_m^{\wedge (k+q'-q)})] \vspace{3pt} \\
  &\cong & \colim_k ~ [\Sigma^\infty_{S^1} (U_+) [i-n] \wedge \GG_m^{\wedge k},~ L^\Aeins (\Sigma_{S^1}^\infty (V_\pt) \wedge \GG_m^{\wedge (k+q'-q)})] \vspace{3pt} \\
  \end{array}
\]
where the last isomorphism is due to compactness of $\Sigma^\infty_{S^1} (U_\pt)$.
Now we use that $\AA^1$-Nisnevich-locally there is an equivalence $\GG_m\sim \PP^1[-1]$.
Hence, it suffices to show that for all but finitely many $k\geq 0$ (and in particular, we may assume $k+q'-q\geq 0$), one has
\[
\begin{array}{lcl}
     [\Sigma^\infty_{S^1} (U_\pt) [i-n] \wedge \GG_m^{\wedge k},~ L^\Aeins (\Sigma_{S^1}^\infty (V_{\pt}) \wedge \GG_m^{\wedge (k+q'-q)})] &\cong &\\
     {[}\Sigma^\infty_{S^1} (U_\pt\wedge ({\PP^1})^{\wedge k}) [i-n-k],~ L^\Aeins \Sigma_{S^1}^\infty (V_{\pt} \wedge \GG_m^{\wedge (k+q'-q)})]  &= & 0.
\end{array}
\]
By the same arguments as in the proof of Proposition \ref{absoluteconnectivity}, this is implied by the vanishing of the group $[\Sigma^\infty_{S^1} (U\times \PP^k)_+ [i-n-k],~ L^\Aeins \Sigma_{S^1}^\infty (V_\pt \wedge \GG_m^{\wedge (k+q'-q)})]$.
Since the spectrum $ \Sigma_{S^1}^\infty (V_\pt \wedge \GG_m^{\wedge (k+q'-q)})$ is in $\SH^s_{S^1\geq 0}$, the result follows from Proposition \ref{absoluteconnectivity} as the scheme $U\times \PP^k$ has dimension $e+k$.
\end{proof}

\begin{corollary}\label{homotopytstructurenondegenerate}
Let $S$ be a noetherian scheme of finite Krull-dimension.
Then the homotopy t-structure on the motivic homotopy category $\SH(S)$ is left complete and hence non-degenerate.
\end{corollary}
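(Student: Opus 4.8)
The deduction runs exactly parallel to the proof of Corollary~\ref{tstructurenondegenerate}, now with Proposition~\ref{absoluteconnectivityp1} in the role Proposition~\ref{absoluteconnectivity} played there. Fix $X\in\SH(S)$ and write $(-)_{\geq n}$, $(-)_{\leq n}$ for the skeleta and coskeleta of the homotopy t-structure (Proposition~\ref{tstructureconstruction}). The distinguished triangles $X_{\geq n+1}\to X\to X_{\leq n}$ assemble, exactly as in Remark~\ref{completenessanddegeneration}, into a distinguished triangle
\[
\holim_n X_{\geq n}\to X\to \holim_n X_{\leq n}.
\]
Hence the homotopy t-structure is left complete precisely when $\holim_n X_{\geq n}\simeq 0$ for every $X$, and that is what I would prove.

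To show $\holim_n X_{\geq n}\simeq 0$ I would test against the generating set $\S=\{\Sigma^\infty_{\PP^1}(U_\pt)\langle q\rangle\mid U\in\Sm_S,\ q\in\ZZ\}$: its objects are compact (this is used already in the proof of Proposition~\ref{absoluteconnectivityp1}) and, together with all their triangulated shifts, detect the zero object of $\SH(S)$ (as in Remark~\ref{remarkrightcompleteness}). Fix $U\in\Sm_S$ of dimension $e$ and $q,i\in\ZZ$, and put $\mathcal{G}=\Sigma^\infty_{\PP^1}(U_\pt)\langle q\rangle[i]$. Since $\SH(S)$ is the homotopy category of a stable model category with products, there is a Milnor $\lim^1$-sequence
\[
0\to {\textstyle\lim_n^1}\,[\mathcal{G}[1],X_{\geq n}]_{\SH}\to [\mathcal{G},\holim_n X_{\geq n}]_{\SH}\to {\textstyle\lim_n}\,[\mathcal{G},X_{\geq n}]_{\SH}\to 0 .
\]
For $n>i+e$ one has $X_{\geq n}\in\SH_{\geq n}(S)\subseteq\SH_{>i+e}(S)$, so Proposition~\ref{absoluteconnectivityp1} gives $[\mathcal{G},X_{\geq n}]_{\SH}=0$; likewise $[\mathcal{G}[1],X_{\geq n}]_{\SH}=0$ for $n>i+e+1$. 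Thus both towers are eventually zero, whence $\lim_n[\mathcal{G},X_{\geq n}]_{\SH}=0$ and the $\lim_n^1$-term is trivial as well. Therefore $[\mathcal{G},\holim_n X_{\geq n}]_{\SH}=0$; since $U$, $q$ and $i$ were arbitrary, $\holim_n X_{\geq n}\simeq 0$ and the homotopy t-structure is left complete.

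It then remains only to note that left completeness forces $\cap_{n\geq 0}\SH_{\geq n}(S)=\{0\}$ by Remark~\ref{completenessanddegeneration}, while $\cap_{n\leq 0}\SH_{\leq n}(S)=\{0\}$ is already recorded in Remark~\ref{rightcompletenessofhomotopytstructure}; together these say the homotopy t-structure is non-degenerate. I do not expect a genuine obstacle beyond invoking Proposition~\ref{absoluteconnectivityp1}: the whole argument is the formal shadow of Corollary~\ref{tstructurenondegenerate}, and the only slightly delicate points — that the $\Sigma^\infty_{\PP^1}(U_\pt)\langle q\rangle$ together with their shifts detect zero objects, and that homotopy limits of towers in $\SH(S)$ obey the Milnor sequence — are standard consequences of $\SH(S)$ arising from a stable, cofibrantly generated model category.
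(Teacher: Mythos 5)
Your proposal is correct and follows essentially the same route as the paper: reduce left completeness to $\holim_n X_{\geq n}\simeq 0$, test against the compact generators $\Sigma^\infty_{\PP^1}(U_\pt)[i]\langle q\rangle$ via the Milnor $\lim^1$-sequence, and conclude by the eventual vanishing supplied by Proposition~\ref{absoluteconnectivityp1}. The only (immaterial) difference is that you spell out the detection-of-zero and $\lim^1$ steps a bit more explicitly than the paper, which simply cites \cite[Thm.~7.3.1]{Hovey99} and Corollary~\ref{tstructurenondegenerate}.
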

\begin{proof}
Let $E\in \SH$.
We have to show that the canonical morphism $E\to\holim E_{\leq n}$ is an isomorphism in $\SH$.
Equivalently, we may show that $\holim E_{\geq n}\simeq 0$.
By \cite[Thm.~7.3.1]{Hovey99}, this is implied by the vanishing of the homotopy classes
\[
[\Sigma^\infty_{\PP^1} (U_+) [i]\langle q\rangle , \underset{n\to\infty}{\holim} ~ E_{\geq n} ]
\]
in $\SH$ for all $U\in\Sm_S$ and all $i,q\in\ZZ$. Using Milnor's $\lim^1$-sequence as in Corollary~\ref{tstructurenondegenerate}, this, in turn, is implied by the following statement:
For all $U\in\Sm_S$ and $i,q\in\ZZ$ there exists an integer $n_0$ with $[\Sigma^\infty_{\PP^1} (U_+) [i]\langle q\rangle , E_{\geq n} ]  = 0$ for all $n\geq n_0$.
Setting $n_0:=i+\dim(U)$, this precisely is the preceeding Proposition \ref{absoluteconnectivityp1}.
\end{proof}

\section{\texorpdfstring{Stalkwise stable $\AA^1$-connectivity}{Stalkwise stable A1-connectivity}}

\noindent
In this section, we derive our main connectivity result for homotopy sheaves (\ie~a ``stalkwise'' connectivity result).
We formulate the \emph{shifted stable $\AA^1$-connectivity property} on the base scheme and show that this property holds for every Dedekind scheme with infinite residue fields.

\subsection*{Stable $\AA^1$-connectivity}

Let us recall the following property on a base scheme $S$ introduced by Morel in \cite[Def.~1]{Morel05}.

\begin{defi}
\label{defi:stable}
A noetherian scheme $S$ of dimension $d$ has the \emph{stable $\AA^1$-connectivity property}, if for every integer $i$ and every spectrum $E$ in $\SH^s_{S^1\geq i}(S)$, the $\AA^1$-localization $L^\Aeins E$ is contained in $\SH^s_{S^1\geq i}(S)$.
\end{defi}

\begin{theorem}[Morel, {\cite[Thm.~6.1.8]{Morel05}}]\label{morelconnectivity}
If $S$ is the spectrum of a field, then $S$ has the stable $\AA^1$-connectivity property.
\end{theorem}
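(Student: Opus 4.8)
The plan is to reduce the statement, via shift-invariance of $L^\Aeins$ and the stalkwise criterion of Corollary~\ref{localcorollary}, to an objectwise vanishing over henselian local rings essentially smooth over a field, and then to prove that vanishing by induction on the Krull dimension --- with base case Proposition~\ref{absoluteconnectivity} and the inductive step fed by Gabber's geometric presentation lemma over a field (\cite[Thm.~3.2.2]{CTHK97}).

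First I would use that $L^\Aeins$ commutes with the simplicial shift (by homotopy-exactness of $L^\simpl$ and the explicit form \eqref{taufunctor}) to reduce to showing that $\pi_{-1}(L^\Aeins E)=0$ for every $E\in\SH^s_{S^1\geq 0}(k)$: applying this to $E[n]\in\SH^s_{S^1\geq n}\subseteq\SH^s_{S^1\geq 0}$ for $n\geq 0$ would give $\pi_{-1-n}(L^\Aeins E)=\pi_{-1}(L^\Aeins(E[n]))=0$, hence $L^\Aeins E\in\SH^s_{S^1\geq 0}(k)$, and the case $E\in\SH^s_{S^1\geq i}(k)$ would follow by applying it to $E[-i]$. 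Next I would note that $\pi_{-1}(L^\Aeins E)$ vanishes if and only if all its Nisnevich stalks do, and that by Corollary~\ref{localcorollary} and Lemma~\ref{interchangebysmoothstable} the stalk at a point $x$ of a smooth $k$-scheme $X$ is the group $[\Sigma^\infty_{S^1}((\Spec\mathcal{O}_{X,x}^h)_+)[-1],L^\Aeins(\mathfrak{x}^*E)]$ formed over the base $\Spec\mathcal{O}_{X,x}^h$, where $\mathfrak{x}$ is the canonical map to $\Spec k$ and $\mathfrak{x}^*E$ is again connective (the left Quillen functor $\mathfrak{x}^*$ carries suspension spectra to suspension spectra, hence preserves $\SH^s_{S^1\geq 0}$). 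So it would suffice to prove:

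\emph{Claim.} For every henselian local ring $\mathcal A$ essentially smooth over a field and every $D\in\SH^s_{S^1\geq 0}(\Spec\mathcal A)$, one has $[\Sigma^\infty_{S^1}((\Spec\mathcal A)_+)[i],L^\Aeins D]=0$ for all $i<0$.

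I would prove the Claim by induction on $d:=\dim\mathcal A$. For $d=0$, $\mathcal A$ is a field, $\Spec\mathcal A\in\Sm_{\mathcal A}$ has dimension $0$, and $D\in\SH^s_{S^1\geq 0}=\SH^s_{S^1>-1}\subseteq\SH^s_{S^1>i}$ for $i\leq -1$, so the vanishing is exactly Proposition~\ref{absoluteconnectivity} over the base $\Spec\mathcal A$ with $U=\Spec\mathcal A$. For $d\geq 1$, I would first arrange $\mathcal A=\mathcal{O}_{X,x}^h$ with $X$ smooth of dimension $d$ over the residue field of $\mathcal A$ and $x$ a rational point, and then use compactness of $\Sigma^\infty_{S^1}((\Spec\mathcal A)_+)$ together with the model $L^\Aeins=L^\infty=\hocolim_k L^k$ of Lemma~\ref{specialreplacement} to reduce to killing, at a finite stage $k$, each class represented by a map $\Sigma^\infty_{S^1}((\Spec\mathcal A)_+)[i]\to L^kD$. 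Here is where Gabber's presentation lemma would enter: applying \cite[Thm.~3.2.2]{CTHK97} to a hypersurface $Z\hookrightarrow X$ through $x$ with $Z\neq X$ yields, Nisnevich-locally around $x$, a Nisnevich-distinguished square identifying $X/(X\setminus Z)$ with $\AA^1_V/(\AA^1_V\setminus p(Z))$, with $\dim V=d-1$ and $p(Z)$ finite over $V$; inserting this into the stable glueing sequence attached to \eqref{glueing} and peeling off one layer of the resulting coniveau filtration of $\Spec\mathcal A$ should identify the corner contribution along $Z$ with a single $\GG_m$-suspended invariant of the henselian local ring $\mathcal{O}_{V,p(x)}^h$ --- equivalently, using the triangle of Remark~\ref{rem: Lk exact triangle}, with an invariant one step higher in the tower $(L^k)_k$. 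Since $\dim\mathcal{O}_{V,p(x)}^h=d-1$ and the relevant $\GG_m$-looped spectrum would again be connective, the inductive hypothesis would apply and kill the class after finitely many such steps; iterating through the finitely many layers would prove the Claim, hence the theorem.

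I expect the real difficulty to lie entirely in this last step: organizing the Gabber/coniveau reduction so that descending one step in codimension costs exactly one $\GG_m$-loop, and verifying that $\GG_m$-looping an $\AA^1$-local connective spectrum over a base of smaller dimension stays connective. This is precisely where the hypothesis that the base is a field is essential --- it fails over general regular bases, by Ayoub's counterexample (Remark~\ref{counterexample}) --- and it is the technical heart of Morel's argument. By contrast, the surrounding bookkeeping (Postnikov towers, $\lim^1$-sequences, the interchange of $L^\tau$ with base change provided by the lemmas of Section~\ref{sec:preliminaries}) I expect to be routine once Proposition~\ref{absoluteconnectivity} is available.
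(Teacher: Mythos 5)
Your reductions (shifting, and the passage to stalks via Corollary~\ref{localcorollary} and Lemma~\ref{interchangebysmoothstable}) are fine, and the base case of your Claim via Proposition~\ref{absoluteconnectivity} is correct, but the inductive step is where the proof genuinely breaks. First, the Gabber square does not identify the ``corner'' $X/(X\setminus Z)$ with a $\GG_m$-suspension of an invariant of $\mathcal{O}_{V,p(x)}^{h}$: it identifies it with $\AA^1_V/(\AA^1_V\setminus p(Z))$ where $p(Z)$ is merely finite over $V$ and, since $Z$ must be taken to be the complement of the open on which your class dies, is in general singular; no purity/Thom-space identification is available, so the ``one $\GG_m$-loop per codimension step'' bookkeeping has no justification. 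Second, the statement you rely on --- that $\Omega_{\GG_m}$ of a connective $\AA^1$-local spectrum over a smaller-dimensional base is again connective --- is not an available input: over a field it is a \emph{consequence} of the connectivity theorem (via strictly $\AA^1$-invariant sheaves and contractions), so invoking it inside the induction is circular, and over positive-dimensional bases exactly this kind of connectivity control is the delicate point (cf.\ Remark~\ref{counterexample}; the present paper only obtains a shifted statement over Dedekind bases). Relatedly, your Claim for $d\geq 1$ and \emph{arbitrary} connective $D$ over $\Spec\mathcal{A}$ asserts far more than the theorem needs and is not reachable by these tools: the closed point of $\Spec\mathcal{A}$ has codimension $d$, so Lemma~\ref{lemmatoconstructW} only controls classes after a $d$-fold shift --- precisely the phenomenon this paper is about.

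Morel's actual argument (which the paper does not reprove --- Theorem~\ref{morelconnectivity} is cited from Morel --- but which Section~4 axiomatizes and reuses verbatim for Theorem~\ref{maintheorem}) involves no induction on dimension and no $\GG_m$-loops. One works directly over the field base and applies Proposition~\ref{mainproposition} with $r=0$: given $f\in[\Sigma^\infty_{S^1}V_+,L^\Aeins E]$, Lemma~\ref{lemmatoconstructW} applied to the codimension-$0$ point of $\Spec(k)$ (so no connectivity is spent --- this is exactly where the hypothesis that the base is a field enters) yields a nonempty open $W\subseteq V$ with $f|_{\Sigma^\infty_{S^1}W_+}=0$; then, instead of any coniveau induction, Gabber's presentation lemma over a field (the field case of Theorem~\ref{thm: gabber distinguished square}, i.e.\ \cite[Thm.~3.1.1]{CTHK97}) combined with the unstable $\PP^1$-compactification argument of Lemma~\ref{connectivityassuminggabber} and Corollary~\ref{a1connectdensecorollary} gives $\pi_0^\Aeins(V/W)=0$ outright, and Proposition~\ref{mainproposition} concludes. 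Over a field the obstruction of Remark~\ref{counterexampleV/W} (that $W$ could miss a closed fibre) cannot occur, which is the only reason the loss $r=0$ is achievable; your sketch misses both of these pillars --- the codimension-$0$ vanishing step that produces $W$, and the unstable connectedness of $V/W$ --- and replaces them with an unproved (and essentially circular) purity-plus-$\GG_m$-loop induction.
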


\begin{corollary}
If $S$ is the spectrum of a field, then $\SH^\Aeins_{S^1\geq 0}(S)= \SH^{\Aeins,\pi}_{S^1\geq 0}(S)$.
\end{corollary}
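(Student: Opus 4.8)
The plan is to deduce this from Morel's stable $\AA^1$-connectivity theorem (Theorem~\ref{morelconnectivity}) together with the criterion recorded in Remark~\ref{tstructureremark}. That remark already gives the inclusion $\SH^{\Aeins,\pi}_{S^1\geq 0}(S)\subseteq \SH^\Aeins_{S^1\geq 0}(S)$ unconditionally, and reduces the reverse inclusion to checking that $L^\Aeins\Sigma^\infty_{S^1} U_\pt$ lies in $\SH^{\Aeins,\pi}_{S^1\geq 0}(S)$ for every $U\in\Sm_S$. So the whole proof consists of verifying this one membership.

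First I would observe that the suspension spectrum $\Sigma^\infty_{S^1} U_\pt$ is connected for the Nisnevich-local t-structure, i.e.\ it belongs to $\SH^s_{S^1\geq 0}(S)$: since $U_\pt$ is a simplicially discrete pointed presheaf, all of its pointed objectwise (hence Nisnevich) homotopy sheaves in negative simplicial degrees vanish, and this persists after $S^1$-stabilization. Next, because $S=\Spec(k)$ has the stable $\AA^1$-connectivity property by Theorem~\ref{morelconnectivity}, applying $L^\Aeins$ to $\Sigma^\infty_{S^1} U_\pt\in\SH^s_{S^1\geq 0}(S)$ keeps us inside $\SH^s_{S^1\geq 0}(S)$; thus $\pi_i^\simpl(L^\Aeins\Sigma^\infty_{S^1} U_\pt)=0$ for all $i\leq -1$.

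The final step is to identify these Nisnevich-local homotopy sheaves with the $\AA^1$-Nisnevich-local ones. Since $L^\Aeins\Sigma^\infty_{S^1} U_\pt$ is in particular Nisnevich-local fibrant, a Nisnevich-local fibrant replacement $L^\simpl$ acts on it as a weak equivalence, so $\pi_i^\simpl(L^\Aeins\Sigma^\infty_{S^1} U_\pt)\cong \pi_i^\Aeins(\Sigma^\infty_{S^1} U_\pt)$ for every $i$. Hence $\pi_i^\Aeins(\Sigma^\infty_{S^1} U_\pt)=0$ for all $i\leq -1$, that is, $L^\Aeins\Sigma^\infty_{S^1} U_\pt\in\SH^{\Aeins,\pi}_{S^1\geq 0}(S)$. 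By Remark~\ref{tstructureremark} this gives $\SH^\Aeins_{S^1\geq 0}(S)\subseteq\SH^{\Aeins,\pi}_{S^1\geq 0}(S)$, and combined with the opposite inclusion we obtain the asserted equality. There is no genuine obstacle here; the argument is essentially a repackaging of Remark~\ref{tstructureremark}, and the only point needing a little care is the bookkeeping between the $\ob$-, $\simpl$- and $\Aeins$-flavours of homotopy sheaves, using that a fibrant replacement for a finer localized model structure is automatically fibrant for the coarser one.
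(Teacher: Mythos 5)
Your argument is correct and is essentially the paper's own (implicit) proof: the corollary is stated there as an immediate consequence of Theorem~\ref{morelconnectivity} together with the criterion in Remark~\ref{tstructureremark}, which is exactly the reduction you carry out. Your only addition is to spell out the routine bookkeeping (connectivity of $\Sigma^\infty_{S^1}U_\pt$ and the identification $\pi_i^\simpl(L^\Aeins\Sigma^\infty_{S^1}U_\pt)\cong\pi_i^\Aeins(\Sigma^\infty_{S^1}U_\pt)$ via fibrancy in the coarser structure), all of which is accurate.
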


\begin{remark}\label{counterexample}
In \cite{Ayoub06}, Ayoub gave examples of base schemes that do not have the stable $\AA^1$-con\-nectivity property:
Let $S/k$ be a connected normal surface over $k$ an algebraically closed field, regular away from one closed singular point $s$.
Let $S^\prime \rightarrow S$ be a resolution with exceptional divisor $E$ and let $E_{\rm red}$ be the underlying reduced subscheme.
Then by \opcit~Corollary~3.3, $S$ does not have the stable $\AA^1$-connectivity property if ${\rm Pic}_{E_{\rm red}}$ is not $\AA^1$-invariant.
Here, ${\rm Pic}_{E_{\rm red}}$ is the Nisnevich-sheafification of the presheaf $U\mapsto {\rm Pic}(U\times_sE_{\rm red})$ on $\Sm_{k(s)}$. 
A family of concrete examples for such a surface $S$ (due to Barbieri-Viale) is given in the example in Section 3 of \opcit~as hypersurfaces of $\PP_k^3$.
Even worse, it follows from \opcit~Lemma~1.3 that no $\PP_k^n$ for $n\geq 3$ has the stable $\AA^1$-connectivity property.
\end{remark}

\subsection*{Towards stable $\AA^1$-connectivity}

We saw in Remark~\ref{counterexample}, that connectivity may drop for general base schemes.
Thus, it is an interesting question if, for a given base scheme $S$, there is at least \emph{some} uniform bound $r$ for the loss of connectivity, \ie, for $E$ an $i$-connected spectrum, the $\AA^1$-localization $L^\Aeins E$ is at least $(i-r)$-connected.
In this subsection, we want to discuss a general recipe for finding such a bound, based on Morel's original work over a field.

\begin{proposition}\label{mainproposition}
Let $S$ be a noetherian scheme of finite Krull-dimension and let $r\geq 0$ be an integer.
Let $E\in\SH^s_{S^1\geq i}(S)$ be a spectrum. 
Suppose for all $V\in\Sm_S$ and all $f\in[\Sigma^\infty_{S^1}V_\pt ,L^\Aeins E]$, Nisnevich-locally in $V$, there exists a Zariski-open $W\hookrightarrow V$ such that
\begin{enumerate}
 \item\label{mainprop1} $f|_{\Sigma^\infty_{S^1} W_\pt}=0$, and
 \item\label{mainprop2} $\pi_0^\Aeins(V/W)=0$.
\end{enumerate}
Then $L^\Aeins E\in \SH^s_{S^1\geq i-r}(S)$.
\end{proposition}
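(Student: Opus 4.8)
The plan is to unwind the conclusion $L^\Aeins E\in\SH^s_{S^1\geq i-r}(S)$ into the statement that the Nisnevich homotopy sheaves $\pi_n^\Aeins(E)$ vanish for all $n<i-r$, and to verify this on Nisnevich stalks. Fix such an $n$ and a point $(V,v)$ with $V\in\Sm_S$. By Lemma~\ref{limitbasechange} and Lemma~\ref{interchangebysmoothstable}.\eqref{stablelimitbasechange2}, the stalk of $\pi_n^\Aeins(E)$ at $(V,v)$ is the filtered colimit $\colim_{(V',v')\to(V,v)}[\Sigma^\infty_{S^1}V'_\pt[n],L^\Aeins E]$ over the Nisnevich neighbourhoods of $(V,v)$; if convenient, Corollary~\ref{localcorollary} first allows one to replace the base by a henselian local ring of $S$, cutting its Krull-dimension down to $\leq\dim S$. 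Hence it suffices to show that every presheaf-level class $f\in[\Sigma^\infty_{S^1}V_\pt[n],L^\Aeins E]$ becomes $0$ after restriction along some Nisnevich neighbourhood of $(V,v)$.

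The next step feeds in the assumption, applied to $f$ (in degree $n$, via the obvious shift of $E$): after passing to a Nisnevich neighbourhood of $(V,v)$ it produces a Zariski-open immersion $W\hookrightarrow V$ with $f\vert_{\Sigma^\infty_{S^1}W_\pt[n]}=0$ and $\pi_0^\Aeins(V/W)=0$. Since $W_\pt\to V_\pt\to V/W$ is a homotopy cofibre sequence of pointed simplicial presheaves, applying the left Quillen functor $\Sigma^\infty_{S^1}$ gives a distinguished triangle $\Sigma^\infty_{S^1}W_\pt\to\Sigma^\infty_{S^1}V_\pt\to\Sigma^\infty_{S^1}(V/W)\to\Sigma^\infty_{S^1}W_\pt[1]$ in $\SH^\Aeins_{S^1}(S)$, and, $f$ killing the image of $\Sigma^\infty_{S^1}W_\pt[n]$, it factors through a class $\bar f\colon\Sigma^\infty_{S^1}(V/W)[n]\to L^\Aeins E$. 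Writing $V/W=V/(V\setminus Z)$ for the reduced closed complement $Z:=V\setminus W$, this exhibits $f$ as ``supported on'' the proper closed subscheme $Z$, while condition~\eqref{mainprop2} records that $L^\Aeins\Sigma^\infty_{S^1}(V/W)$ has no homotopy sheaf in degree $0$ (and, trivially, none in negative degrees, since $V/W$ is $0$-truncated).

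The heart of the argument — and the step I expect to be the main obstacle — is to convert this factorization into genuine vanishing, and I expect the integer $r$ to enter precisely here, as the depth of an induction. The difficulty is that $\pi_0^\Aeins(V/W)=0$ is only a single-degree condition, so it does not by itself force $\Sigma^\infty_{S^1}(V/W)$, hence $\bar f$, to become highly connective after $\AA^1$-localization; one has to iterate. Concretely, I would induct on $r$, letting the proper closed subscheme $Z$ take over the role of $V$ in the next round — so that a Noetherian induction on closed subschemes of $V$ drives the individual rounds while the budget $r$ bounds their number — and repeatedly apply the hypothesis together with the triangle above to $\bar f$ and its successors. Combined with the connectivity $E\in\SH^s_{S^1\geq i}$, with the objectwise vanishing of Proposition~\ref{absoluteconnectivity}, and with the left completeness of the $\AA^1$-Nisnevich-local t-structure (Corollary~\ref{tstructurenondegenerate}) — needed to pass to the limit of the resulting Postnikov/coniveau filtration and to control the occurring Milnor $\lim^1$-terms — this should force $f$ to die after a further Nisnevich refinement once $n<i-r$. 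Making this dévissage precise, in particular explaining why $r$ rounds genuinely suffice and how the (generally non-smooth) strata $Z$ are fed back into the construction, is where the real work lies; the remainder is formal manipulation of triangles and of the base-change statements collected in Section~\ref{sec:preliminaries}.
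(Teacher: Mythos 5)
Your opening moves agree with the paper: reduce to Nisnevich stalks, represent a germ by a presheaf class $f$ after a Nisnevich refinement, invoke the hypothesis to get $W$, and use the cofibre sequence $W_\pt\to V_\pt\to V/W$ to lift $f$ to a class on $\Sigma^\infty_{S^1}(V/W)$. But the step you yourself flag as ``the heart of the argument'' --- an induction on $r$ with a Noetherian d\'evissage that feeds the closed strata $Z$ back into the hypothesis, controlled by left completeness and Milnor $\lim^1$-terms --- is left open (``where the real work lies'') and is moreover not a viable route: the hypothesis quantifies over $V\in\Sm_S$ and classes on $\Sigma^\infty_{S^1}V_\pt$ only, so it cannot be re-applied to the quotient $V/W$ or to the generally non-smooth complement $Z$, and no mechanism is offered for why $r$ rounds would suffice. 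In fact no induction is needed and $r$ enters only through the harmless initial shift: after reducing to killing a single $\pi_0$-germ, hypothesis~\eqref{mainprop2} finishes the proof in one step.

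The idea you are missing is unstable. You read \eqref{mainprop2} as a statement about the stable localization $L^\Aeins\Sigma^\infty_{S^1}(V/W)$ (likewise your aside that it ``trivially'' has no negative homotopy sheaves presumes exactly the connectivity statement under discussion), and from that stable viewpoint a single-degree vanishing indeed looks insufficient. But $\pi_0^\Aeins(V/W)$ is the \emph{unstable} $\AA^1$-homotopy sheaf of the pointed presheaf $V/W$, and the paper exploits it via the Quillen adjunction $(\Sigma^\infty_{S^1},(-)_0)$: the lifted class $g\in[\Sigma^\infty_{S^1}(V/W),L^\Aeins E]$ corresponds to an unstable map $g\colon V/W\to(L^\Aeins E)_0$, whose target is an $\AA^1$-Nisnevich-local space, so $g$ factors through $h\colon L^\Aeins(V/W)\to(L^\Aeins E)_0$; since $\pi_0(L^\Aeins(V/W))=\pi_0^\Aeins(V/W)=0$, the germ of $f$ in the stalk of $\pi_0^\Aeins(E)$ is trivial. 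No left completeness, $\lim^1$, or stratification is involved. The connectivity of $E$ and the dimension bound $r$ only become relevant downstream, in Theorem~\ref{maintheorem}, where Lemma~\ref{lemmatoconstructW} produces a $W$ satisfying \eqref{mainprop1} with nonempty special fibre, and Theorem~\ref{thm: gabber distinguished square} together with Corollary~\ref{a1connectdensecorollary} secures \eqref{mainprop2}.
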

\begin{proof}
After shifting, we can assume that $i=r$.
We have to show that the sheaf $\pi_0^\Aeins(E)$ is trivial.
Take a connected scheme $V\in\Sm_S$ with structure morphism $p\colon V\to S$ and a point $v\in V$.
It suffices to show that the Nisnevich-stalk of $\pi_0^\Aeins(E)$ at $(V,v)$ is trivial.
Let $f_{(V,v)}$ be a germ in this stalk.
Possibly refining $(V,v)$ Nisnevich-locally, we may assume that $f_{(V,v)}$ is induced by an element $f\in[\Sigma^\infty_{S^1}V_\pt ,L^\Aeins E]$.
After a further Nisnevich-refinement of $(V,v)$, we find a Zariski-open $W\hookrightarrow V$ satisfying properties~\eqref{mainprop1} and \eqref{mainprop2}.
The homotopy cofibre sequence $W_\pt\to V_\pt \to V/W$ induces a long exact sequence
\[
\cdots\to [\Sigma^\infty_{S^1}V/W ,L^\Aeins E] \to [\Sigma^\infty_{S^1}V_\pt ,L^\Aeins E] \to [\Sigma^\infty_{S^1}W_\pt ,L^\Aeins E]\to\cdots.
\]
Since the restriction of $f$ to $\Sigma^\infty_{S^1}W_\pt$ is trivial by~\eqref{mainprop1}, $f$ is the image of an element in the group $[\Sigma^\infty_{S^1}V/W ,L^\Aeins E]$, \ie, a morphism $g\colon V/W\to (L^\Aeins E)_0$ in the (unstable) objectwise (pointed) homotopy category.
We want to show the triviality of the germ $f_{(V,v)}$, so it is enough to show that $\pi_0(g)$ is trivial.
As the adjunction $(\Sigma^\infty_{S^1},(-)_0)$ is a Quillen-adjunction for the $\AA^1$-Nisnevich-local model, $(L^\Aeins E)_0$ is $\AA^1$-Nisnevich-local.
Therefore, the morphism $g$ factors through $h\colon L^\Aeins(V/W)\to (L^\Aeins E)_0$ and it suffices to show that $\pi_0(h)$ is trivial which follows from the assumption~\eqref{mainprop2}.
\end{proof}

In the following, let us discuss how to obtain assumptions~\eqref{mainprop1} and~\eqref{mainprop2} from the previous proposition.
We start with assumption~\eqref{mainprop2}.

\medskip

Let us first recall the following construction.
The singular functor $\Sing\colon \sPre(S)\to\sPre(S)$ is given on $U$-sec\-tions by the diagonal of the bisimplicial set $\Sing(F)(U)=F_{\mbox{\larger[-20]$\bullet$}}(\Delta\hspace{-7pt}\mbox{\larger[-2]$\Delta$}^{\mbox{\larger[-20]$\bullet$}}\times U)$ where $\Delta\hspace{-7pt}\mbox{\larger[-2]$\Delta$}^{\mbox{\larger[-20]$\bullet$}}$ denotes the standard cosimplicial object in $\Sm_S$ (\cf~\cite[Ch.~2.3.2]{MV99} in the analogous situation for simplicial sheaves).
An infinite alternating composition of a Nisnevich-local fibrant replacement functor $L^s$ and $\Sing$ yields an $\AA^1$-Nisnevich-local fibrant replacement functor $L^\Aeins$ in the unstable setting. We refer to \cite[Ch.~2.3.2]{MV99} for details of this well-known construction.

\smallskip

Note that for Lemma~\ref{connectivityassuminggabber} and its Corollary~\ref{a1connectdensecorollary}, we work in the unstable setting.

\begin{lemma}\label{connectivityassuminggabber}
Let $V\in\Sm_S$ be an irreducible scheme and $W\hookrightarrow V$ a non-empty open subscheme.
Let $Z=(V\setminus W)_{\rm red}$ be the reduced complement.
Suppose moreover, that each point $v$ of $V$ admits a Nisnevich-neighbourhood $V^\prime$ (with pullback $W^\prime$ and $Z^\prime$ to $V^\prime$) together with an \'{e}tale map $p\colon V^\prime \rightarrow \AA^1_Y$ in $\Sm_S$ with $Z^\prime \to Y$ finite such that
\[
  \xymatrix{
   W^\prime \ar[r] \ar[d] &
   V^\prime \ar[d]^-p
  \\
   \AA_Y^1 \setminus p(Z^\prime) \ar[r] &
   \AA_Y^1
  }
\]
is a Nisnevich-distinguished square.
Then $\pi_0(\Sing(a_{\rm Nis}(V/W)))$ is trivial for $a_{\rm Nis}$ the Nisnevich-sheafi\-fication.
\end{lemma}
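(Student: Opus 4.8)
The plan is to show that the Nisnevich sheaf $\pi_0\bigl(\Sing\, a_{\mathrm{Nis}}(V/W)\bigr)$ — i.e.\ the Nisnevich sheafification of the presheaf $U\mapsto\pi_0\bigl(\Sing\, a_{\mathrm{Nis}}(V/W)(U)\bigr)$ — is the terminal sheaf, by checking that each of its germs is trivial. Fix $X\in\Sm_S$ and $x\in X$. Since $\pi_0$ and $\Sing$ commute with the filtered colimit computing a Nisnevich stalk, a germ at $(X,x)$ is represented, after passing to a Nisnevich neighbourhood, by a $0$-simplex, that is by a section of $a_{\mathrm{Nis}}(V/W)$. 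As $a_{\mathrm{Nis}}(V/W)=a_{\mathrm{Nis}}(V_\pt)/a_{\mathrm{Nis}}(W_\pt)$ and sheafification is exact, the stalk of $a_{\mathrm{Nis}}(V/W)$ at $(X,x)$ is $V(X^h_x)_\pt/W(X^h_x)_\pt$; so, after a further Nisnevich refinement (using that $V$ is of finite type over $S$, \cf\ Lemma~\ref{limitbasechange}), the germ is either the basepoint — and then there is nothing to prove — or it is represented by an honest morphism $g\colon X\to V$. If $g(x)\in W$ then $g$ factors through $W$ on an open neighbourhood of $x$ and the germ is the basepoint; hence we may assume $y:=g(x)\in Z$.

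Now apply the hypothesis at $y\in V$: we obtain a Nisnevich neighbourhood $\rho\colon V'\to V$ of $y$, an \'etale $p\colon V'\to\AA^1_Y$ with $Z':=\rho^{-1}(Z)$ finite over $Y$, and a Nisnevich-distinguished square relating $W':=\rho^{-1}(W)\hookrightarrow V'$ to $\AA^1_Y\setminus p(Z')\hookrightarrow\AA^1_Y$. Since $X^h_x$ is henselian and $\rho$ is a Nisnevich neighbourhood of $y=g(x)$, the morphism $g$ lifts — after replacing $X$ by the Nisnevich neighbourhood $X\times_VV'$ of $x$ — to $g'\colon X\to V'$ with $g'(x)\in Z'$, and the germ of our section is the image of $g'$ under $V'\to V'/W'\to V/W$. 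By the Morel--Voevodsky excision property of Nisnevich-distinguished squares \cite[Thm.~3.2.21]{MV99}, the map $V'/W'\to\AA^1_Y/(\AA^1_Y\setminus p(Z'))$ induced by $p$ is an isomorphism of Nisnevich sheaves, hence an isomorphism of simplicial presheaves after applying $\Sing$; moreover $T:=p(Z')$ is closed in $\AA^1_Y$ and finite over $Y$ (as $Z'\to Y$ is finite and $\AA^1_Y\to Y$ separated). By functoriality it therefore suffices to prove: the class in $\pi_0\bigl(\Sing\, a_{\mathrm{Nis}}(\AA^1_Y/(\AA^1_Y\setminus T))(X)\bigr)$ of the composite $c\colon X\xrightarrow{p\circ g'}\AA^1_Y\twoheadrightarrow\AA^1_Y/(\AA^1_Y\setminus T)$ becomes trivial after a Nisnevich refinement of $(X,x)$.

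For the geometric heart, let $q\colon X\to Y$ be induced by $p\circ g'$, so that $p\circ g'$ is a section of $\AA^1_X:=X\times_Y\AA^1_Y\to X$, i.e.\ a function $f_0\in\mathcal O(X)$, and let $T_X:=T\times_YX\subseteq\AA^1_X$, a closed subscheme finite over $X$. Passing to the henselian local ring $R=\mathcal O_{X,x}^h$, the finite $R$-algebra $\mathcal O(T_X)\otimes_{\mathcal O(X)}R$ is a finite product of henselian local rings whose residue fields are finite extensions of $\kappa:=k(x)$, and the coordinate $t$ maps to elements with residue classes $\bar\tau_i$ in these fields. A function $f_1\in R$ has graph disjoint from $T_X\times_XR$ exactly when $t-f_1$ is a unit in each factor, i.e.\ when $\bar f_1\neq\bar\tau_i$ in the corresponding residue field for all $i$; for the factors with residue field strictly larger than $\kappa$ this is automatic, and for the finitely many remaining ones it amounts to choosing $\bar f_1\in\kappa$ outside a finite set, which is possible because $\kappa=k(x)$ is infinite (under the standing assumption on $S$ every residue field of a scheme in $\Sm_S$ is infinite). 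Spreading out, we get a Nisnevich neighbourhood $X''\to X$ of $x$ and $f_1\in\mathcal O(X'')$ whose graph misses $T_{X''}$, so that $f_1\colon X''\to\AA^1_Y$ factors through the open subscheme $\AA^1_Y\setminus T$. Finally, the affine-linear homotopy $(1-\lambda)f_0+\lambda f_1\in\mathcal O(X'')[\lambda]$ defines a $Y$-morphism $\AA^1\times X''\to\AA^1_Y$; composing with the quotient yields a $1$-simplex of $\Sing\, a_{\mathrm{Nis}}(\AA^1_Y/(\AA^1_Y\setminus T))(X'')$ whose two faces are $c|_{X''}$ and the basepoint. Thus $c|_{X''}$, and hence the original germ, is trivial.

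The main obstacle is precisely this last, geometric step: reducing via the distinguished square to the normal form $\AA^1_Y/(\AA^1_Y\setminus T)$ with $T$ finite over $Y$, and then moving a section (a function $f_0$) by an $\AA^1$-homotopy off the finite closed subscheme $T_X$. The only non-formal input is the infinitude of $k(x)$ — exactly the hypothesis preventing a function over a henselian local base from being forced to meet every horizontal point of $T_X$, and the same assumption underlying Theorem~\ref{thm: gabber distinguished square}.
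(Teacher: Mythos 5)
Your overall reduction is sound and close in spirit to the paper up to a point: the stalkwise framing, the lifting of $g$ along the Nisnevich neighbourhood $V'\to V$, and the use of excision for the distinguished square to replace $V'/W'$ by $\AA^1_Y/(\AA^1_Y\setminus T)$ with $T\cong Z'$ finite over $Y$ are all fine. The problem is the geometric heart: you move the section $f_0$ off $T_X$ by an affine-linear homotopy to a function $f_1$ whose residue class $\bar f_1\in\kappa=k(x)$ must avoid the finitely many residues $\bar\tau_i$ lying in $\kappa$, and you justify this by an alleged ``standing assumption'' that every residue field of a scheme in $\Sm_S$ is infinite. No such assumption exists: the lemma is stated (and proved in the paper) for an arbitrary noetherian base of finite Krull dimension, with the only hypothesis being the local presentation by a distinguished square with $Z'\to Y$ finite. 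Your step genuinely fails when $\kappa$ is finite -- e.g.\ $Y=\Spec(\mathbb{F}_q)$, $T\subset\AA^1_Y$ the union of all $\mathbb{F}_q$-rational points (finite over $Y$) and $x$ a point with $k(x)=\mathbb{F}_q$: every residue class is excluded, so no constant value of $f_1$ at $x$ works, yet the lemma is still true in this situation. So as written you have proved a strictly weaker statement; it would happen to suffice for the application in Theorem~\ref{maintheorem} (over a henselian discrete valuation ring with infinite residue field all residue fields of points of smooth schemes are indeed infinite), but it does not prove the lemma as stated, and the paper deliberately keeps the residue-field hypothesis out of this lemma.

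The paper's proof avoids the issue entirely and is also ``global'' rather than stalkwise: it first observes that $V\twoheadrightarrow \pi_0(\Sing(a_{\rm Nis}(V/W)))$ is an epimorphism of sheaves, so it suffices to show that $V'\to \pi_0(\Sing(a_{\rm Nis}(V'/W')))$ is zero for the given neighbourhoods; after excision one must show that $\Sing$ of $q\colon \AA^1_Y\to a_{\rm Nis}(\AA^1_Y/(\AA^1_Y\setminus Z'))$ is null. Since $Z'\to Y$ is finite, $Z'\hookrightarrow\PP^1_Y$ is closed, so the quotient is unchanged upon compactifying to $\PP^1_Y$, the section at infinity factors through $\PP^1_Y\setminus Z'$, and the elementary $\AA^1$-homotopy from the zero section to the section at infinity, together with \cite[Lem.~2.3.6, Cor.~2.3.5]{MV99} (i.e.\ $\Sing(Y)\sim\Sing(\AA^1_Y)$ objectwise), forces $\Sing(q)$ to be constant. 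This ``homotopy to infinity'' is exactly the replacement for your choice of a generic constant $f_1$ and is what removes any condition on residue fields; if you want to salvage your argument, replace the affine-linear move to a rational point by this projective contraction (or otherwise produce a section avoiding $T$ without assuming $\kappa$ infinite), since over finite residue fields no such section of $\AA^1$ need exist.
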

\begin{proof}
We follow the proof of \cite[Lem.~6.1.4]{Morel05}.
Since a simplicial presheaf $F$ has the same zero-simplices as the simplicial presheaf $\Sing(F)$, there is an epimorphism $[(-)_\pt,F]\twoheadrightarrow [(-)_\pt,\Sing(F)]$ of presheaves.
As Nisnevich-sheafification preserves epimorphisms, we get a natural epimorphism $\pi_0(F)\twoheadrightarrow \pi_0(\Sing(F))$ of sheaves.
Applying this to the discrete simplicial presheaf $F=a_{\rm Nis}(V/W)$ and pre-composing with the epimorphism $V=a_{\rm Nis}V\twoheadrightarrow a_{\rm Nis}(V/W)$ of sheaves, we get a natural epimorphism 
\[
 V\twoheadrightarrow a_{\rm Nis}(V/W) = \pi_0(a_{\rm Nis}(V/W)) \twoheadrightarrow \pi_0(\Sing(a_{\rm Nis}(V/W)))
\]
of sheaves. Hence, it suffices to show that for each point $v\in V$ there exists a Nisnevich-neighbourhood $V'$ such that $V'\to \pi_0(\Sing(a_{\rm Nis}(V'/W')))$ is zero where $W':=W\times_V V'$.
\\
Let $v\in V$ be a point and choose the Nisnevich-neighbourhood $V'$ from the assumption of the proposition.
In particular, $V'/W' \rightarrow \AA^1_Y / (\AA^1_Y\setminus Z{'})$ is an isomorphism of Nisnevich sheaves, so we may assume $V^\prime = \AA^1_Y$ with closed $Z^\prime \hookrightarrow \AA^1_Y$ and $Z^\prime\to Y$ finite.
By finiteness, the morphism $Z'\to \AA^1_Y\hookrightarrow\PP^1_Y$ is proper and hence a closed immersion. Therefore we get a diagram
\[
\xymatrix@M=4pt{
{\AA}^1_Y\setminus Z{'}\ar[r]\ar[d] & {\AA}^1_Y \ar@{->}[r]^(0.27){q} \ar[d]^j & a_{\rm Nis}\left(\bigslant{{\AA}^1_Y}{{\AA}^1_Y\setminus Z{'}}\right)\ar@{->}[d]^\cong\\
{\PP}^1_Y\setminus Z{'}\ar[r]& {\PP}^1_Y \ar@{->}[r]^(0.28){q'} & a_{\rm Nis}\left(\bigslant{{\PP}^1_Y}{{\PP}^1_Y\setminus Z{'}}\right)\\
}
\]
where the right vertical morphism is an isomorphism of Nisnevich sheaves as the left-hand square is a Zariski- and therefore a Nisnevich-distinguished square.
\\
There exists an elementary $\AA^1$-homotopy $Y\times\AA^1\to \PP^1_Y$ from the zero-section $s_0\colon Y\to\AA^1_Y\hookrightarrow\PP^1_Y$ to the section $s_\infty\colon Y\to\PP^1_Y$ at infinity and the latter factorizes over ${\PP}^1_Y\setminus Z{'}$.
As by \cite[Lem.~2.3.6]{MV99} the functor $\Sing$ turns elementary $\AA^1$-homotopies into objectwise homotopies, the maps $\Sing(s_0)$ and $\Sing(s_\infty)$ are identified in the objectwise homotopy category.
The morphism $\Sing(Y)\xrightarrow{\sim} \Sing(\AA^1_Y)$ is an objectwise weak equivalence by \cite[Cor.~2.3.5]{MV99}. Hence, the composition $\Sing(q'\circ j)$ is the constant map to the point in the objectwise homotopy category.
It follows that the same is true for $\Sing(q)$, as desired. Note that the cited arguments of \cite{MV99} are valid for the objectwise structure on simplicial presheaves. 
\end{proof}

Using the epimorphism $\pi_0(\Sing(a_{\rm Nis}(V/W)))\twoheadrightarrow \pi_0^\Aeins(V/W)$ of sheaves \cite[Cor.~2.3.22]{MV99}, we get:

\begin{corollary}\label{a1connectdensecorollary}
In the situation of the previous Lemma~\ref{connectivityassuminggabber}, we have
\[
\pi_0^\Aeins(V/W)= 0.
\]
\end{corollary}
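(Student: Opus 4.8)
The plan is to deduce the corollary directly from Lemma~\ref{connectivityassuminggabber} together with the standard comparison between the unstable $\AA^1$-localization and the $\Sing$-construction. Recall that in the unstable setting $L^\Aeins$ is modelled by an infinite alternating composition of the Nisnevich-local replacement $L^\simpl$ and $\Sing$, so there is a natural map from (a suitable Nisnevich-sheafification of) $\Sing$ to $L^\Aeins$. On $\pi_0$ this yields, for every pointed simplicial presheaf $F$, a natural \emph{epimorphism} of Nisnevich sheaves
\[
\pi_0(\Sing(a_{\rm Nis} F)) \twoheadrightarrow \pi_0^\Aeins(F),
\]
which is exactly \cite[Cor.~2.3.22]{MV99}; this is the surjection recorded in the sentence immediately preceding the corollary. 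Applying it with $F = V/W$ gives the epimorphism $\pi_0(\Sing(a_{\rm Nis}(V/W))) \twoheadrightarrow \pi_0^\Aeins(V/W)$.

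It then remains to invoke Lemma~\ref{connectivityassuminggabber}: its hypotheses on $V$, $W$ and the Nisnevich-distinguished squares are precisely those assumed in Corollary~\ref{a1connectdensecorollary}, so the lemma applies and asserts that the sheaf $\pi_0(\Sing(a_{\rm Nis}(V/W)))$ is trivial. A quotient of the trivial sheaf is trivial, hence $\pi_0^\Aeins(V/W) = 0$, as claimed. There is no genuine obstacle at this stage: all the geometric content has already gone into Lemma~\ref{connectivityassuminggabber} (and, further upstream, into the Gabber-type presentation of Theorem~\ref{thm: gabber distinguished square}), and what is left is merely to transport the established vanishing along the cited epimorphism.
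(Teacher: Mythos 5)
Your argument is exactly the paper's: the statement follows by combining Lemma~\ref{connectivityassuminggabber} with the natural epimorphism $\pi_0(\Sing(a_{\rm Nis}(V/W)))\twoheadrightarrow \pi_0^\Aeins(V/W)$ of \cite[Cor.~2.3.22]{MV99}, so the vanishing transports along the surjection. Nothing is missing; this matches the paper's one-line proof.
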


\begin{remark}\label{counterexampleV/W}
As explained in \cite[Rem.~6.1.5]{Morel05}, $\pi_0^\Aeins(V/W)= 0$ might fail for arbitrary open subschemes $W\hookrightarrow V$.
For example, let $S$ be the spectrum of a local ring with closed point $i\colon\sigma\hookrightarrow S$ and open complement $j\colon W\hookrightarrow S$. Set $V:=S$ and consider the $\AA^1$-Nisnevich-local homotopy cofibre sequence
$
j_\sharp j^* (V/W) \to V/W \to i_* L^\Aeins i^* (V/W) 
$
from \eqref{glueing}.
We have $j_\sharp j^* (S/W)\simeq *$ and therefore $L^\Aeins(V/W)\simeq i_*L^\Aeins(i^*(V/W))$.
On the other hand, $i^*(S/W)\simeq i^*(S)/i^*(W)\simeq \sigma/\emptyset\simeq S^0_\sigma$ and hence $i_*L^\Aeins(i^*(V/W))$ has non-trivial $\pi_0$.
\end{remark}

Now we turn to assumption~\eqref{mainprop1} of Proposition~\ref{mainproposition}.
For the special case of $S$ the spectrum of a field, this is an observation of Morel in \cite[Lem.~3.3.6]{Morel04}.
Please note that the extra claim $s^*(W)\neq\emptyset$ in the next lemma will exclude the obvious obstacle to assumption~\eqref{mainprop2} of Proposition~\ref{mainproposition} in our application (\cf~Re\-mark~\ref{counterexampleV/W}).

\begin{lemma}\label{lemmatoconstructW}
Let $S$ be a noetherian scheme of finite Krull-dimension together with a codimension $c$ point $s\in S$.
Let $E\in\SH^\simpl_{S^1 > c}(S)$ be a spectrum.
Then, for any $V\in\Sm_S$ with $s^*(V)\neq\emptyset$ and any $f\in [\Sigma^\infty_{S^1} V_\pt, L^\Aeins E]$ there exists an open subscheme $W\hookrightarrow V$ with $f|_{\Sigma^\infty_{S^1} W_\pt}=0$ and $s^*(W)\neq\emptyset$.
\end{lemma}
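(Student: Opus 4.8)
The strategy is to restrict $f$ to the Zariski local ring at a generic point of the fibre over $s$ and then apply Proposition~\ref{absoluteconnectivity} there. In detail: since $s^*(V)=V\times_S\Spec(k(s))$ is non-empty and noetherian, I would pick a point $v\in V$ that is a generic point of an irreducible component of this fibre; its image in $S$ is $s$. The numerical point on which everything hinges is that $\dim\mathcal{O}_{V,v}=c$: the morphism $V\to S$ is smooth, hence flat and of finite type, so $\dim\mathcal{O}_{V,v}=\dim\mathcal{O}_{S,s}+\dim\mathcal{O}_{V_s,v}$, and the ring $\mathcal{O}_{V_s,v}$ is a field because $V_s$ is smooth, hence reduced, over $k(s)$ and $v$ is a generic point of it. It is crucial to take $v$ generic in the fibre: for an arbitrary point of $V_s$ the local dimension would exceed $c$ and spoil the connectivity bound used below.

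Next, I would put $D:=\Spec(\mathcal{O}_{V,v})$ with structure morphism $d\colon D\to S$ and write $D$ as the cofiltered limit of the principal open neighbourhoods $D(g)$ of $v$ inside some fixed affine open $U_0\subseteq V$ containing $v$; these lie in $\Sm_S$, are quasi-separated, and have affine transition morphisms. By Lemma~\ref{interchangebysmoothstable} one gets $d^*L^\Aeins E\sim L^\Aeins(d^*E)$ together with a natural isomorphism $[\Sigma^\infty_{S^1}D_\pt,d^*L^\Aeins E]\cong\colim_{g}[\Sigma^\infty_{S^1}D(g)_\pt,L^\Aeins E]$ sending $f$ to the system of its restrictions $f|_{D(g)}$. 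Moreover $d^*E\in\SH^\simpl_{S^1>c}(D)$: the functor $d^*$ is left Quillen for the stable Nisnevich-local model structures, so on homotopy categories it is exact, commutes with homotopy colimits, and sends the generators $\Sigma^\infty_{S^1}U_\pt$ ($U\in\Sm_S$) to generators over $D$; hence it preserves the positive part of the Nisnevich-local t-structure, and from $E\in\SH^\simpl_{S^1>c}(S)$ one obtains $d^*E\in\SH^\simpl_{S^1>c}(D)$ after the appropriate shift. Now $D$ is noetherian of Krull dimension $c$, so Proposition~\ref{absoluteconnectivity}, applied over the base $D$ to the object $U=D\in\Sm_D$ of dimension $c$, with $i=0$ and the spectrum $d^*E$, yields $[\Sigma^\infty_{S^1}D_\pt,L^\Aeins(d^*E)]=0$.

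Consequently the image of $f$ in the above colimit vanishes, so $f|_{\Sigma^\infty_{S^1}D(g)_\pt}=0$ for some $g$ with $v\in D(g)$, and I would take $W:=D(g)$, an open subscheme of $V$. Finally $v\in W$ and $v$ lies over $s$, hence $s^*(W)\neq\emptyset$, which completes the argument. The only genuinely delicate step is the choice of $v$ and the accompanying dimension computation, which is what keeps the Krull dimension of the auxiliary base $D$ equal to $c$ so that Proposition~\ref{absoluteconnectivity} applies; everything else is a routine use of the base-change and limit results of Section~\ref{sec:preliminaries}. One should note that replacing $D$ by a henselization of $\mathcal{O}_{V,v}$ or by a Nisnevich neighbourhood of $v$ would not do: that would only produce a Nisnevich-local rather than a Zariski-open subscheme, and it would moreover be circular with respect to the very statement that Proposition~\ref{mainproposition} is designed to establish.
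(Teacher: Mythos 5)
Your proposal is correct and follows essentially the same route as the paper's own proof: localize $V$ at a generic point $v$ of $s^*(V)$ so that $\Spec(\mathcal{O}_{V,v})$ has dimension $c$, use Lemmas~\ref{limitbasechange} and~\ref{interchangebysmoothstable} (together with the $(d_{g\sharp},d_g^*)$-adjunction bookkeeping, which the paper writes out explicitly and you compress) to identify the class of $f$ over this local scheme with a filtered colimit of its restrictions to open neighbourhoods meeting the fibre, note that pullback preserves connectivity, and apply Proposition~\ref{absoluteconnectivity} over the $c$-dimensional local base to kill the colimit, so some restriction $f|_{\Sigma^\infty_{S^1}W_\pt}$ vanishes with $s^*(W)\neq\emptyset$. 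The only cosmetic differences are your use of principal opens inside a fixed affine (the paper uses all affine opens meeting the chosen component) and your direct generators argument for the connectivity of $d^*E$ in place of the paper's factorization through $p^*$ and $\mathfrak{j}^*$.
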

\begin{proof}
Let $\eta_Z\in V$ be a generic point of an irreducible component $Z$ of $s^*(V)$.
In particular, the ring $\mathcal{O}_{V,\eta_Z}$ has dimension $c$.
We write
\[
\mathfrak{j}\colon\mathfrak{U}:=\Spec(\mathcal{O}_{V,\eta_Z})~\cong ~\underset{j_i\colon U_i\hookrightarrow V}{\lim} U_i \rightarrow V,
\]
where the limit on the right-hand side is indexed by the diagram constituted by the open immersions $j_i\colon U_i\hookrightarrow V$ with $U_i$ affine and $U_i\cap Z\neq\emptyset$.

Let $p\colon V \to S$ denote the structural morphism.
We have
\begin{align*}
& \underset{j_i\colon U_i\hookrightarrow V}{\colim} [\Sigma_{S^1}^\infty(U_i\to S)_\pt, L^\Aeins E]_{\Spt_{S^1}(S)}&&  \\ 
~\cong~ & \underset{j_i\colon U_i\hookrightarrow V}{\colim} [\Sigma_{S^1}^\infty p_\sharp ((U_i\to V)_\pt), L^\Aeins E]_{\Spt_{S^1}(S)}&&\\ 
~\cong~ & \underset{j_i\colon U_i\hookrightarrow V}{\colim} [p_\sharp \Sigma_{S^1}^\infty (U_i\to V)_\pt, L^\Aeins E]_{\Spt_{S^1}(S)}&&\\ 
~\cong~ & \underset{j_i\colon U_i\hookrightarrow V}{\colim} [\Sigma_{S^1}^\infty(U_i\to V)_\pt, p^*( L^\Aeins E)]_{\Spt_{S^1}(V)}&&\\ 
~\cong~ & \underset{j_i\colon U_i\hookrightarrow V}{\colim} [\Sigma_{S^1}^\infty(U_i\to V)_\pt, L^\Aeins(p^* E)]_{\Spt_{S^1}(V)}&& \text{(by Lemma~\ref{interchangebysmoothstable}.\eqref{stablelimitbasechange1})}\\ 
~\cong~ & \underset{j_i\colon U_i\hookrightarrow V}{\colim} [ j_{i,\sharp} \Sigma_{S^1}^\infty (U_i\to U_i)_\pt, L^\Aeins(p^* E)]_{\Spt_{S^1}(V)}&& \text{($j_{i,\sharp} ((U_i\to U_i)_\pt) = (U_i\to V)_\pt$)}\\ 
~\cong~ & \underset{j_i\colon U_i\hookrightarrow V}{\colim} [\Sigma_{S^1}^\infty (U_i\to U_i)_\pt, j_i^* L^\Aeins(p^* E)]_{\Spt_{S^1}(U_i)} &&\\
~\cong~ & [ \Sigma_{S^1}^\infty(\mathfrak{U}\to \mathfrak{U})_\pt , \mathfrak{j}^* L^\Aeins (p^* E) ]_{\Spt_{S^1}(\mathfrak{U})} &&\text{(by Lemma~\ref{interchangebysmoothstable}.\eqref{stablelimitbasechange2})}\\
~\cong~ & [ \Sigma_{S^1}^\infty(\mathfrak{U}\to \mathfrak{U})_\pt , L^\Aeins (\mathfrak{j}^*p^* E) ]_{\Spt_{S^1}(\mathfrak{U})} &&\text{(by Lemma~\ref{interchangebysmoothstable}.\eqref{stablelimitbasechange1})}.\\
\end{align*}
Using the Quillen adjoint pair $(p_\sharp, p^*)$, we see that $p^*$ preserves connectivity.
By Lemma \ref{interchangebysmoothstable}, the same is true for $\mathfrak{j}^*$, so $\mathfrak{j}^*(p^* E)$ is contained in $\SH^\simpl_{S^1> c}(\mathfrak{U})$.
By Proposition \ref{absoluteconnectivity}, we get $[\Sigma^\infty_{S^1}\mathfrak{U}_\pt,\mathfrak{j}^*p^* E] = 0$ as the scheme $\mathfrak{U}$ has dimension $c$.
\\
The restrictions of $f\in [\Sigma^\infty_{S^1} V_\pt, L^\Aeins E]$ induce an element of the set $\colim_{j_i}[\Sigma_{S^1}^\infty U_{i,\pt}, L^\Aeins E]=0$ from the left-hand side of the chain of equations above.
This means that there exists an open subscheme $W := U_i\hookrightarrow V$ with $W\cap Z\neq\emptyset$ and $f|_{\Sigma_{S^1}^\infty W_\pt}=0$. Since $Z\subseteq s^*(V)$, we have $s^*(W)\neq\emptyset$.
\end{proof}

Finally, let us mention that for connectivity results we may restrict ourself to local base schemes.

\begin{lemma}\label{lemmastalksofthebase}
Let $S$ be a noetherian scheme of finite Krull-dimension and let $r\geq 0$ be an integer.
Let $E\in\SH^s_{S^1\geq i}(S)$ be a spectrum.
Suppose that for all points $s\in S$ and $\mathfrak{s}\colon S_s^h\to S$, we have that $L^\Aeins \mathfrak{s}^*E\in \SH^s_{S^1\geq i-r}(S_s^h)$, then $L^\Aeins E\in \SH^s_{S^1\geq i-r}(S)$.
\end{lemma}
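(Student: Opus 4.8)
The plan is to deduce the statement over $S$ from the assumed statement over the henselian local schemes $S_s^h$, by combining the stalkwise criterion of Corollary~\ref{localcorollary} with the compatibility of the $\AA^1$-localization with base change along $\mathfrak{s}\colon S_s^h\to S$ recorded in Lemma~\ref{interchangebysmoothstable}. First I would reduce to the case $i=r$: since $L^\Aeins$ is homotopy-exact and the shift functor commutes with $\mathfrak{s}^*$, replacing $E$ by $E[r-i]$ turns both the hypothesis and the conclusion into the corresponding statements with $i$ replaced by $r$. Hence it suffices to show that, if $E\in\SH^s_{S^1\geq r}(S)$ and $L^\Aeins\mathfrak{s}^*E\in\SH^s_{S^1\geq0}(S_s^h)$ for every point $s\in S$, then $L^\Aeins E\in\SH^s_{S^1\geq0}(S)$; equivalently, the homotopy sheaf $\pi_n^\simpl(L^\Aeins E)$ vanishes for every integer $n\leq-1$.

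Fix such an $n$. Since $L^\Aeins E$ is $\AA^1$-Nisnevich-local, hence Nisnevich-local, fibrant, one has $\pi_n^\simpl(L^\Aeins E)\cong\pi_0^\simpl\bigl((L^\Aeins E)[-n]\bigr)$, where $[-n]$ is the shift functor, a genuine endofunctor of $\Spt_{S^1}(S)$ since $-n\geq1$. I would then apply Corollary~\ref{localcorollary}, in the form that \eqref{localcorollary1} is equivalent to \eqref{localcorollary3}, to the spectrum $(L^\Aeins E)[-n]$: its zeroth homotopy sheaf vanishes if and only if $\pi_0^\simpl\bigl(\mathfrak{s}^*((L^\Aeins E)[-n])\bigr)=0$ for every point $s\in S$. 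As the shift functor is defined levelwise it commutes with $\mathfrak{s}^*$, and since $\mathfrak{s}$ is the canonical morphism $\lim\underline{D}\to S$ with $\underline{D}$ the diagram of affine Nisnevich neighbourhoods of $s$, Lemma~\ref{interchangebysmoothstable}.\eqref{stablelimitbasechange1} gives $\mathfrak{s}^*L^\Aeins E\sim L^\Aeins\mathfrak{s}^*E$. Therefore $\pi_0^\simpl\bigl(\mathfrak{s}^*((L^\Aeins E)[-n])\bigr)\cong\pi_0^\simpl\bigl((L^\Aeins\mathfrak{s}^*E)[-n]\bigr)\cong\pi_n^\simpl(L^\Aeins\mathfrak{s}^*E)$, which vanishes by hypothesis because $L^\Aeins\mathfrak{s}^*E\in\SH^s_{S^1\geq0}(S_s^h)$ and $n\leq-1$. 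Hence $\pi_n^\simpl(L^\Aeins E)=0$, completing the argument.

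I do not expect a genuine obstacle here: the statement is the formal combination of the stalk computation underlying Corollary~\ref{localcorollary} with the exactness and base-change properties of $L^\Aeins$. The only points requiring some care are the bookkeeping of shifts --- that $\mathfrak{s}^*$, $L^\Aeins$ and $[-n]$ commute up to natural (objectwise) equivalence, which follows from homotopy-exactness of $L^\Aeins$ and the levelwise definition of $\mathfrak{s}^*$ and of the shift --- and that Lemma~\ref{interchangebysmoothstable} must be invoked for the limit morphism $\mathfrak{s}$ rather than for a smooth morphism of finite type, which is exactly the additional case that lemma is designed to cover.
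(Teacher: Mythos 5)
Your proposal is correct and follows essentially the same route as the paper: reduce by shifting, test the vanishing of the relevant homotopy sheaves on henselian stalks via Corollary~\ref{localcorollary}, and identify $\mathfrak{s}^*L^\Aeins E\sim L^\Aeins\mathfrak{s}^*E$ using Lemma~\ref{interchangebysmoothstable}. The paper's proof is a terser version of the same argument (it phrases the reduction as showing a single $\pi_0$ vanishes), while you spell out the bookkeeping over all degrees $n\leq -1$; there is no substantive difference.
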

\begin{proof}
After shifting, we can assume that $i=r$.
We have to show that the sheaf $\pi_0^\Aeins(E)$ is trivial.
It follows from Corollary~\ref{localcorollary} that $\pi_0(L^\Aeins E)$ is trivial if and only if $\pi_0(\mathfrak{s}^* L^\Aeins E)$ is trivial for all $s\in S$.
Hence the claim follows.
\end{proof}

\subsection*{Shifted stable $\AA^1$-connectivity}

In order to obtain a uniform bound for the loss of connectivity of a spectrum $E$, we may restrict ourself to a local base scheme $S$ by Lemma~\ref{lemmastalksofthebase}.
We want to invoke Proposition~\ref{mainproposition}.
Given a $V$-section $f$ of a homotopy presheaf of $E$, we have to search for an open subscheme $W\hookrightarrow V$ fulfilling assumption~\eqref{mainprop1} and ~\eqref{mainprop2} of Proposition~\ref{mainproposition}, \ie, $f=0$ when restricted to $W$, and $V/W$ is $\AA^1$-Nisnevich-local connected.
For the former condition, we want to apply Lemma~\ref{lemmatoconstructW}.
To avoid the obstacle to the latter condition comming from Remark~\ref{counterexampleV/W}, we need to take care that $W$ has non-empty fibre over the closed point of $S$.
This point has codimension $c$ the dimension $d$ of $S$.
Thus, again by Lemma~\ref{lemmatoconstructW}, a natural candidate for a uniform bound on the loss of connectivity of $E$ is $c=d$.
This motivates the following definition respectively question.

\begin{defi}
\label{defi:shiftedstable}
A noetherian scheme $S$ of dimension $d$ has the \emph{shifted stable $\AA^1$-connectivity property}, if for every integer $i$ and every spectrum $E$ in $\SH^s_{S^1\geq i}(S)$, the $\AA^1$-localization $L^\Aeins E$ is contained in $\SH^s_{S^1\geq i-d}(S)$.
\end{defi}

\begin{question}\label{thequestion}
Let $S$ be a regular noetherian scheme of dimension $d$. Does $S$ have the shifted stable $\AA^1$-connectivity property?
\end{question}

\begin{remark}
Morel's connectivity theorem (\cf~Theorem \ref{morelconnectivity} above) provides a positive answer in the case of $S$ the spectrum of a field.
In the case of $S$ a Dedekind scheme with all residue fields infinite, we get a positive answer by Theorem~\ref{maintheorem} below.
Unfortunately, we do not have a positive or negative answer for more general base schemes. 
\end{remark}

\begin{remark}\label{counterexample2}
The example of Ayoub discussed in Remark~\ref{counterexample} does not provide a negative answer to Question~\ref{thequestion} above.
In fact, Ayoub gave an example of a base scheme $S$ (of dimension $\geq 2$) and a spectrum $E$ whose homotopy sheaves $\pi^\Aeins_i(E)$ are not \emph{strictly} $\AA^1$-invariant.
The latter property is a consequence of the \emph{non-shifted} stable $\AA^1$-connectivity property, \ie, the property that $\AA^1$-localization does not lower the connectivity at all.
However, the proof (\cf~\cite[Thm.~6.2.7]{Morel05}) that the non-shifted stable $\AA^1$-connectivity property implies strictly $\AA^1$-invariant homotopy sheaves does not carry over from the non-shifted to the shifted stable connectivity property of the base.
\end{remark}

At least, a positive answer to Question~\ref{thequestion} would follow from $\AA^1$-invariance of $\AA^1$-homotopy sheaves $\pi^\Aeins_k(E)$ as the following proposition shows.
Note that the property of a Nisnevich sheaf to be strictly $\AA^1$-invariant is a stronger property than just being $\AA^1$-invariant (\cf~Remark~\ref{counterexample2}).

\begin{proposition}\label{evidence}
Let $S$ be a noetherian scheme of dimension $d$.
Let $i$ be an integer and $E\in\SH^\simpl_{\geq i}(S)$ such that the sheaf $\pi^\Aeins_k(E)$ is $\AA^1$-invariant for all integers $k<i-d$.
Then $L^\Aeins E\in \SH^s_{S^1\geq i-d}(S)$.
\end{proposition}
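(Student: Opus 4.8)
The plan is to deduce the statement from Lemma~\ref{lemmatoconstructW} together with the unramifiedness of strictly $\AA^1$-invariant Nisnevich sheaves on $\Sm_S$, \ie\ the fact that for such a sheaf $M$ and an irreducible $V\in\Sm_S$ with dense open $W\hookrightarrow V$ the restriction $M(V)\to M(W)$ is injective; in particular such an $M$ vanishes as soon as its stalks at all generic points of schemes in $\Sm_S$ vanish. We must show $\pi^\Aeins_k(E)=0$ for every $k<i-d$. Since the triangulated shift is homotopy-exact we have $L^\Aeins(E[-k])\simeq(L^\Aeins E)[-k]$, hence $\pi^\Aeins_k(E)\cong\pi^\Aeins_0(E[-k])$ while $E[-k]\in\SH^\simpl_{S^1\geq i-k}$. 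So fix $k<i-d$, put $E':=E[-k]\in\SH^\simpl_{S^1\geq i-k}\subseteq\SH^\simpl_{S^1>d}$ (because $i-k>d$), and note that the sheaf $M:=\pi^\Aeins_0(E')=\pi^\Aeins_k(E)$ is strictly $\AA^1$-invariant by hypothesis, hence unramified; thus it suffices to prove $M_{\eta_V}=0$ for every irreducible $V\in\Sm_S$ with generic point $\eta_V$.

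Fix such a $V$, let $s\in S$ be the image of $\eta_V$ and $c=\operatorname{codim}_S(s)$, so $c\leq d$ and therefore $E'\in\SH^\simpl_{S^1\geq i-k}\subseteq\SH^\simpl_{S^1>c}$ since $i-k>d\geq c$. A germ in $M_{\eta_V}$ is represented, after passing to a Nisnevich neighbourhood of $(V,\eta_V)$ which we may again take irreducible and rename $V$, by a class $f\in[\Sigma^\infty_{S^1}V_\pt,L^\Aeins E']$. As $\eta_V$ lies over $s$ we have $s^*(V)\neq\emptyset$, so Lemma~\ref{lemmatoconstructW} applied to $s$ and $E'$ provides a nonempty Zariski-open $W\hookrightarrow V$ with $f|_{\Sigma^\infty_{S^1}W_\pt}=0$. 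Since $V$ is irreducible, the nonempty open $W$ contains $\eta_V$, so $W\hookrightarrow V$ is a Zariski (hence Nisnevich) neighbourhood of $(V,\eta_V)$ on which $f$ is trivial; therefore the germ of $f$ in $M_{\eta_V}$ vanishes. As $f$ was arbitrary, $M_{\eta_V}=0$, and by unramifiedness $M=0$. Running this for every $k<i-d$ gives $\pi^\Aeins_k(E)=0$ for all $k<i-d$, \ie\ $L^\Aeins E\in\SH^s_{S^1\geq i-d}(S)$.

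The essential step — and the place where the $\AA^1$-invariance hypothesis on the homotopy sheaves enters — is the reduction from ``$M_{\eta_V}=0$ for all generic points'' to ``$M=0$'', namely the unramifiedness of strictly $\AA^1$-invariant sheaves on $\Sm_S$ (over a field this is due to Morel); this is exactly what confines the loss of connectivity to codimension, and it is the point I expect to need the most care over a general base. (Note that one genuinely uses \emph{strict} $\AA^1$-invariance here, not merely $\AA^1$-invariance of the homotopy presheaves, which holds automatically for any $\AA^1$-local spectrum.) The only other thing to watch is the bookkeeping of connectivity against codimension: replacing $E$ by the shift $E[-k]$ is precisely what lets Lemma~\ref{lemmatoconstructW} be applied at points $s\in S$ of codimension as large as $d$, and this shift is what accounts for the $d$ units of connectivity lost in the conclusion.
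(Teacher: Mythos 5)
There is a genuine gap, and it sits exactly at the step you yourself flag as the essential one. The hypothesis of Proposition~\ref{evidence} is that the Nisnevich \emph{sheaves} $\pi^\Aeins_k(E)$ are $\AA^1$-invariant, not that they are \emph{strictly} $\AA^1$-invariant; the sentence preceding the proposition (and Remark~\ref{counterexample2}) stresses that strict invariance is a strictly stronger property which is precisely \emph{not} available over a general base. Your parenthetical argument that the hypothesis ``must'' mean strict invariance conflates two different statements: $\AA^1$-invariance of the homotopy \emph{presheaves} $[\Sigma^\infty_{S^1}(-_\pt)[k],L^\Aeins E]$ is indeed automatic, but $\AA^1$-invariance of their Nisnevich sheafifications is not (sheafification can destroy it over a general base), and the latter is what is assumed. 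Worse, even if you grant yourself strict $\AA^1$-invariance of $M=\pi^\Aeins_k(E)$, the unramifiedness input you need --- injectivity of $M(V)\to M(W)$ for dense opens, equivalently that vanishing of stalks at generic points forces $M=0$ --- is a theorem of Morel \emph{over a field}; over a general noetherian base of dimension $d$ no such result is available, and its known proofs use exactly the connectivity and geometric-presentation machinery whose extension beyond fields is the subject of this paper. So the reduction from ``germs at generic points vanish'' to ``$M=0$'' is unproven (arguably circular), and it is precisely the hard case: the paper must control germs at arbitrary points $(V,v)$, with $v$ in the closed fibre being the essential difficulty, whereas your Lemma~\ref{lemmatoconstructW} argument only disposes of germs at generic points (where the open $W$ automatically contains the point).

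For comparison, the paper's proof avoids unramifiedness altogether. After reducing (by shifting and Corollary~\ref{localcorollary}) to a local base and arguing by induction on $d$ (the case $d=0$ being Theorem~\ref{morelconnectivity}), it uses Lemma~\ref{lemmatoconstructW} to kill a representative $f$ on an open $W\hookrightarrow V$ with nonempty special fibre, and then, instead of appealing to injectivity of restriction, it studies $\tilde\pi_0(L^\Aeins E)(V/W)$ directly via the glueing cofibre sequence~\eqref{glueing} for $V/W$. Mapping that sequence into $\pi_0^\Aeins(E)$ --- and here is where the $\AA^1$-invariance hypothesis is actually used, to know that this discrete sheaf is an $\AA^1$-Nisnevich-local object, and that $j^*$ preserves the hypothesis for the induction --- the open part vanishes by the induction on $\dim S$, and the closed part vanishes by \cite[Prop.~4.2]{Spitzweck14} together with Morel's \cite[Lem.~6.1.4]{Morel05} over the residue field of the closed point. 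If you want to salvage your approach you would have to prove an unramifiedness theorem for these $\pi_0^\Aeins$-sheaves over $S$, which is not easier than the proposition itself.
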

\begin{proof}
First note, that for any open immersion $j\colon S'\hookrightarrow S$ the functor $j^*$ preserves $\AA^1$-invariance of (simplicial) presheaves and $j^*\pi_0\cong \pi_0j^*$.
In particular, our assumptions on the spectrum are stable under restriction to open subschemes of the base.
Let $E$ be a spectrum in $\SH^\simpl_{S^1>d}$ with $\AA^1$-invariant homotopy sheaves $\pi^\Aeins_k(E)$ in degrees $k\leq 0$.
To prove Proposition \ref{evidence}, it is again enough to show that $\pi^\Aeins_0(E)$ is trivial.
We argue by induction on the dimension $d$ of the base $S$.
The case $d=0$ is Theorem~\ref{morelconnectivity}. Let $d>0$.
By Corollary \ref{localcorollary}, we may assume that $S$ is local with closed point $i\colon\sigma\hookrightarrow S$.
Take a connected scheme $V\in\Sm_S$ with structure morphism $p\colon V\to S$ and a point $v\in V$.
It suffices to show that the Nisnevich-stalk of $\pi_0^\Aeins(E)$ at $(V,v)$ is trivial.
By the induction hypothesis, we may assume that $v$ lies in the fibre over $\sigma$ as the open complement $S\setminus\sigma$ has Krull-dimension strictly smaller than $d$.
Moreover, we may assume that $i^*(V)$ is connected.
Let $f_{(V,v)}$ be a germ in this stalk.
We have to show that $f_{(V,v)}$ is trivial:
After possibly refining $(V,v)$ Nisnevich-locally, we may assume that $f_{(V,v)}$ is induced by an element $f\in[\Sigma^\infty_{S^1}V_\pt, L^\Aeins E]$.
By Lemma \ref{lemmatoconstructW}, there exists an open subscheme $W\hookrightarrow V$ with $f|_{\Sigma^\infty_{S^1}W_\pt}=0$ and $i^*(W)\neq\emptyset$.
Clearly, we may assume that $v\notin W$. The cofibre sequence $W_\pt\to V_\pt \to V/W$ induces an exact sequence
\[
0\to \tilde\pi_0(L^\Aeins E)(V/W) \to \tilde\pi_0(L^\Aeins E)(V) \to \tilde\pi_0(L^\Aeins E)(W)
\]
of homotopy sheaves.
Here, we write $\tilde\pi_0(L^\Aeins E)(V/W)$ for $\Hom(V/W,\tilde\pi_0(L^\Aeins E))$.
Since the restriction of $f$ to $W$ is trivial, it suffices to show that $\tilde \pi_0(L^\Aeins E)(V/W)$ is trivial.
The $\AA^1$-Nisnevich-local homotopy cofibre sequence
\[
j_\sharp j^* (V/W) \to V/W \to i_* L^\Aeins i^* (V/W) 
\]
from ~\eqref{glueing} induces a long exact sequence
\[
\cdots\to [i_* L^\Aeins i^* (V/W), \pi_0^\Aeins(E)]\to [V/W, \pi_0^\Aeins(E)]\to [j_\sharp j^* (V/W),\pi_0^\Aeins(E)]
\]
by the $\AA^1$-Nisnevich-local fibrancy of $\pi_0^\Aeins(E)=\pi_0(L^\Aeins E)$.
For the latter, note that a sheaf considered as a discrete simplicial presheaf is Nisnevich-locally fibrant.
The set on the right-hand side equals $[j^* (V/W),j^*\pi_0^\Aeins(E)]$ and $j^* \pi_0^\Aeins(E)\cong \pi_0^\Aeins(j^*E)$ is trivial by induction.
The triviality of the set on the left-hand side follows from the triviality of $\pi_0(i_* L^\Aeins i^* (V/W))$.
By \cite[Prop.~4.2]{Spitzweck14}, the latter is zero, if $\pi^\Aeins_0(i^* (V/W))=0$.
Since $i^*(V)$ is irreducible and $i^*(W)$ is non-empty, we conclude by \cite[Lem.~6.1.4]{Morel05}.
\end{proof}

\subsection*{The one dimensional case}
Using the Gabber-presentation provided by Theorem~\ref{thm: gabber presentation}, we can give a positive answer to Question \ref{thequestion} for a Dedekind scheme $S$ with infinite residue fields.

\begin{theorem}\label{maintheorem}
Let $S$ be a Dedekind scheme and assume that all of its residue fields are infinite.
Then $S$ has the shifted stable $\AA^1$-connectivity property:
$E\in\SH^s_{S^1\geq i}(S)$ implies $L^\Aeins E\in \SH^s_{S^1\geq i-1}(S)$. 
\end{theorem}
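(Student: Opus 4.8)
The plan is to run the argument of Proposition~\ref{evidence} with $d=1$, but to feed in the geometric presentation of Theorem~\ref{thm: gabber distinguished square} over a henselian discrete valuation ring in place of the $\AA^1$-invariance hypothesis used there.

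\emph{Reduction to a henselian discrete valuation ring and to one degree.} By Lemma~\ref{lemmastalksofthebase} it suffices to prove the conclusion after base change along $\mathfrak{s}\colon S_s^h\to S$ for every point $s\in S$. If $s$ is a generic point, then $S_s^h$ is the spectrum of a field and Morel's Theorem~\ref{morelconnectivity} even gives $L^\Aeins\mathfrak{s}^*E\in\SH^s_{S^1\geq i}$. If $s$ is a closed point, then $S_s^h=\Spec(\mathfrak{o})$ with $\mathfrak{o}$ a henselian discrete valuation ring whose residue field $k(s)$ is infinite by hypothesis; write $\sigma$ \resp\ $\eta$ for its closed \resp\ generic point and $k$ for its fraction field. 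Shifting $E$ and using homotopy-exactness of $L^\Aeins$, the assertion ``$L^\Aeins E\in\SH^s_{S^1\geq i-1}$ for all $E\in\SH^s_{S^1\geq i}(\Spec\mathfrak{o})$'' is equivalent to
\[
\pi_0^\Aeins(E)=0\qquad\text{for every }E\in\SH^s_{S^1>1}(\Spec\mathfrak{o}),
\]
since $\pi_k^\Aeins(E)\cong\pi_0^\Aeins(E[-k])$ and $E[-k]\in\SH^s_{S^1\geq i-k}\subseteq\SH^s_{S^1>1}$ whenever $k\leq i-2$. Observe that this is precisely the connectivity for which Lemma~\ref{lemmatoconstructW} becomes applicable at the codimension-$1$ point $\sigma$.

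\emph{Stalkwise vanishing.} Let $V\in\Sm_S$ be connected (hence irreducible, $\mathfrak{o}$ being regular) with structure map $p\colon V\to S$, and $v\in V$; we must show that the Nisnevich stalk of $\pi_0^\Aeins(E)$ at $(V,v)$ is trivial. If $v$ lies over $\eta$, refine $(V,v)$ to the open subscheme $V\setminus V_\sigma\in\Sm_k$ and conclude from Morel's theorem over $k$ applied to $j^*E\in\SH^s_{S^1>1}(\Spec k)$, where $j\colon\Spec k\hookrightarrow S$ and $j^*L^\Aeins\simeq L^\Aeins j^*$ by Lemma~\ref{interchangebysmoothstable}. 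If $v$ lies over $\sigma$, pick a representative $f\in[\Sigma^\infty_{S^1}V_\pt,L^\Aeins E]$ of a given germ (after a Nisnevich refinement of $(V,v)$), and apply Lemma~\ref{lemmatoconstructW} with the point $\sigma$ of codimension $c=1$: this is legitimate because $E\in\SH^s_{S^1>1}$ and $\sigma^*(V)\neq\emptyset$, and it produces a Zariski-open $W\hookrightarrow V$ with $f|_{\Sigma^\infty_{S^1}W_\pt}=0$ and, crucially, $\sigma^*(W)\neq\emptyset$. If $v\in W$ the germ is already trivial; otherwise set $Z=(V\setminus W)_{\rm red}$, a proper closed subscheme with $Z_\sigma\subsetneq V_\sigma$ precisely because $\sigma^*(W)\neq\emptyset$. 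As in the proof of Proposition~\ref{mainproposition}, the vanishing $f|_{\Sigma^\infty_{S^1}W_\pt}=0$ makes $f$ factor through $\Sigma^\infty_{S^1}(V/W)$, and since $(L^\Aeins E)_0$ is $\AA^1$-local the germ of $f$ at $(V,v)$ lies in the image of the stalk of $\pi_0^\Aeins(V/W)$ at $(V,v)$; hence it suffices to prove $\pi_0^\Aeins(V/W)=0$.

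\emph{Invoking Gabber's presentation.} By Corollary~\ref{a1connectdensecorollary} it is enough to verify the hypothesis of Lemma~\ref{connectivityassuminggabber}: that each point $x$ of $V$ possesses a Nisnevich neighbourhood $V'$ admitting an \'{e}tale map $V'\to\AA^1_Y$ in $\Sm_S$ with $Z'$ finite over $Y$ and making the relevant square Nisnevich-distinguished. For $x\in W$ this is immediate (a neighbourhood inside $W$ that is \'{e}tale over $\AA^1_{\AA^{n-1}_S}$ does it, with $Z'=\emptyset$; the case of relative dimension $0$ does not occur at this point, since then $\sigma^*(W)\neq\emptyset$ forces $W=V\ni v$). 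For $x\in Z$ this is exactly Theorem~\ref{thm: gabber distinguished square} applied to $(V,Z,x)$: its hypothesis holds because either $x\notin V_\sigma$ (no condition) or $x\in Z_\sigma$ and $Z_\sigma\neq V_\sigma$, and the theorem then supplies a smooth $\mathfrak{o}$-scheme $Y$ of finite type, an \'{e}tale $p\colon V'\to\AA^1_Y$ with $V'\setminus Z'=W'$, a closed embedding $Z'\hookrightarrow\AA^1_Y$ and $Z'$ finite over $Y$ --- precisely the required data. Thus $\pi_0^\Aeins(V/W)=0$, which completes the proof. The only genuinely delicate point internal to this argument is that Gabber's presentation must be available at every point of the special fibre; this is what forces us to arrange $\sigma^*(W)\neq\emptyset$ (so that $Z_\sigma\neq V_\sigma$) and is the sole place where infiniteness of the residue field enters --- the geometric substance behind it being Theorem~\ref{thm: gabber distinguished square} itself.
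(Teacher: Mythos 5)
Your proposal is correct and follows essentially the same route as the paper: reduce to a henselian discrete valuation ring via Lemma~\ref{lemmastalksofthebase}, dispose of the generic/field case by Morel's Theorem~\ref{morelconnectivity}, produce $W$ with $f|_{\Sigma^\infty_{S^1}W_\pt}=0$ and $\sigma^*(W)\neq\emptyset$ by Lemma~\ref{lemmatoconstructW}, and obtain $\pi_0^\Aeins(V/W)=0$ from Theorem~\ref{thm: gabber distinguished square} via Lemma~\ref{connectivityassuminggabber} and Corollary~\ref{a1connectdensecorollary}, concluding by the factorization argument of Proposition~\ref{mainproposition}. The only differences are expository: you inline the proof of Proposition~\ref{mainproposition}, make the degree bookkeeping explicit, and check the hypothesis of Lemma~\ref{connectivityassuminggabber} at points of $W$ (including the relative-dimension-zero case), which the paper leaves implicit.
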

\begin{proof}
By Lemma~\ref{lemmastalksofthebase}, we may assume that $S$ is henselian local of dimension $\leq 1$ with infinite residue field and closed point $\sigma$.
The case of dimension zero is covered by Theorem~\ref{morelconnectivity}.
Hence we may assume that $S$ is the spectrum of a henselian discrete valuation ring.
We want to apply Proposition~\ref{mainproposition}.
Consider an element $f\in[\Sigma^\infty_{S^1}V_\pt,L^\Aeins E]$ for $V\in\Sm_S$.
We may assume that $\sigma^*(V)\neq\emptyset$ since otherwise we argue as Morel in the proof of Theorem~\ref{morelconnectivity}.
By Lemma~\ref{lemmatoconstructW} applied to the closed point $\sigma$ of $S$, we find an open subscheme $W\hookrightarrow V$ such that $f_{\Sigma^\infty_{S^1}W_\pt}=0$ and $\sigma^*(W)\neq\emptyset$.
Let $i\colon Z\hookrightarrow V$ be the reduced closed complement of $W$.
In particular, $Z_\sigma\neq V_\sigma$.
By Theorem~\ref{thm: gabber distinguished square}, the conditions of Lemma~\ref{connectivityassuminggabber} are fulfilled and we get the second assumption $\pi^\Aeins_0(V/W)=0$ of Proposition~\ref{mainproposition}, as well.
\end{proof}

\providecommand{\bysame}{\leavevmode\hbox to3em{\hrulefill}\thinspace}
\providecommand{\href}[2]{#2}

\end{document}